\renewcommand\arraystretch{1.5}
\theoremstyle{definition}
\newtheorem{alg}{Algorithm}[section]
\newtheorem{df}[alg]{Definition}
\newtheorem{ex}[alg]{Example}
\theoremstyle{remark}
\newtheorem*{rem}{Remark}
\theoremstyle{plain}
\newtheorem{lma}[alg]{Lemma}
\newtheorem{thm}[alg]{Theorem}
\newtheorem{prp}[alg]{Proposition}
\newtheorem{crl}[alg]{Corollary}
\DeclareMathOperator \supp {supp}
\DeclareMathOperator \xx {\pmb{\mathrm x}}
\DeclareMathOperator \yy {\pmb{\mathrm y}}
\DeclareMathOperator \zz {\pmb{\mathrm z}}
\DeclareMathOperator \XX {\pmb{\mathrm X}}
\DeclareMathOperator \YY {\pmb{\mathrm Y}}
\DeclareMathOperator \ZZ {\pmb{\mathrm Z}}
\DeclareMathOperator*\tcup {{\textstyle\bigcup}}
\DeclareMathOperator \Dist {Dist}
\DeclareMathOperator \subdiv {subdiv}
\DeclareMathOperator \child {child}
\DeclareMathOperator \midp {mid}
\newcommand \mathbox [2][2em]{\makebox[#1]{$\displaystyle #2$}}
\newcommand \Stackrel [3][1.4ex]{\mathrel{\mathbox[#1]{\stackrel{#2}{#3}}}}
\newcommand \mbigcup [2][2em]{\mathbox[#1]{\bigcup_{#2}}}
\newcommand \sei \coloneqq
\newcommand \N {\mathcal N}
\newcommand \Nx {\N_\mathrm x}
\newcommand \Ny {\N_\mathrm y}
\newcommand \Nz {\N_\mathrm z}
\renewcommand \S {\mathcal S}
\newcommand \Sx {\S_\mathrm x}
\newcommand \Sy {\S_\mathrm y}
\newcommand \Sz {\S_\mathrm z}
\newcommand \R {\mathcal R}
\newcommand \Rx {\R_\mathrm x}
\newcommand \Ry {\R_\mathrm y}
\newcommand \Rz {\R_\mathrm z}
\newcommand \AR {\mathcal{AR}}
\newcommand \barOmega {\overline{\rule{0pt}{8pt}\smash\Omega}}
\newcommand \raiseqed {\\[-3.25em]}
\newcommand \patch [3][\mathbf p,m]{#2^{#1}(#3)}
\newcommand \clos [1][\mathbf p,m]{\operatorname{clos}^{#1}}
\newcommand \D [1][\mathbf p,m]{\operatorname{\mathbf D}^{#1}}
\newcommand \U [1][\mathbf p,m]{U^{#1}}
\newcommand \refine [1][\mathbf p,m]{\operatorname{ref}^{#1}}
\newcommand \M {\mathcal M}
\newcommand \KM {\tilde K}
\newcommand \G {\mathcal G}
\newcommand \Guni [1]{\mathcal G_{u\mid #1}}
\newcommand \A[1][\mathbf p,m] {\mathbb A^{#1}}
\newcommand \AS[1][\mathbf p] {\mathbb{A\negmedspace S}^{#1}}
\newcommand \DC[1][\mathbf p] {\mathbb{D\negmedspace C}^{#1}}
\newcommand \mn {_{\mathrm m}}
\newcommand \mx {_{\mathrm M}}
\newcommand \Mtilde {\overset{\mspace{9mu}\textstyle\sim}{\smash{\M}\rule{0pt}{.8ex}}}
\newcommand \tXi {\overset{\textstyle\sim}{\smash{\Xi}\rule{0pt}{.8ex}}}
\newcommand \ceilfrac [2] {\mathchoice{\bigl\lceil\tfrac{#1}{#2}\bigr\rceil}{\lceil\frac{#1}{#2}\rceil}{\lceil\frac{#1}{#2}\rceil}{\lceil\frac{#1}{#2}\rceil}}
\newcommand \de {\,\mathrm d}
\newcommand \xix {\Xi_\mathrm x}
\newcommand \xiy {\Xi_\mathrm y}
\newcommand \xiz {\Xi_\mathrm z}
\newcommand \hatxix {\hat{\rule{0pt}{8pt}\Xi}_\mathrm x}
\newcommand \hatxiy {\hat{\rule{0pt}{8pt}\Xi}_\mathrm y}
\newcommand \hatxiz {\hat{\rule{0pt}{8pt}\Xi}_\mathrm z}
\newcommand \reell {\mathbb R}
\newcommand \true {\ensuremath{\mathsf{true}}}
\newcommand \false {\ensuremath{\mathsf{false}}}
\newlength \eqheight
\newcommand \EQ {\mathrel{\rule{0pt}{\eqheight}=}}
\newcounter{num}
\numberwithin{num}{alg}
\renewcommand{\thenum}{\arabic{num}}
\newcommand{\pnumpx}[1][]{\refstepcounter{num}\noindent\textbf{(\thenum)}\enspace\emph{#1}}
\newcommand{\numref}[1]{\textbf{(\ref{#1})}}
\newcommand{\email}[1]{\href{mailto:#1}{#1}}
\newenvironment{silentproof}{\pushQED{\qed}}{\popQED}
\title{Globally structured 3D analysis-suitable T-splines:  definition, linear independence and  $m$-graded local refinement}
\author{Philipp Morgenstern\footnote{Rheinische Friedrich-Wilhelms-Universit\"at Bonn \newline Wegelerstr.\ 6, 53115 Bonn, Germany / +49\,228\,73-60153  / \email{morgenstern@ins.uni.bonn.de} \vspace{1ex} \newline 
The author gratefully acknowledges support by the Deutsche Forschungsgemeinschaft in the Priority Program 1748 
``Reliable simulation techniques in solid mechanics: Development of non-standard discretization methods, mechanical and mathematical analysis'' under the project ``Adaptive isogeometric modeling of propagating strong discontinuities in heterogeneous materials''. }}
\begin{document}
\maketitle
\begin{abstract}
This paper addresses the linear independence of T-splines that correspond to refinements of three-dimensional tensor-product meshes. We give an abstract definition of analysis-suitability, and prove that it is equivalent to dual-compatibility, wich guarantees linear independence of the T-spline blending functions. In addition, we present a local refinement algorithm that generates analysis-suitable meshes and has linear computational complexity in terms of the number of marked and generated mesh elements.
\end{abstract}
\textbf{Keywords:} Isogeometric Analysis, trivariate T-Splines, Analysis-Suitability, Dual-Compatibility, Adaptive mesh refinement.

\section{Introduction}
T-splines \cite{SZBN:2003} have been introduced as a free-form geometric technology and were the first tool of interest in Adaptive Isogeometric Analysis (IGA). Although they are still among the most common techniques in Computer Aided Design, T-splines provide algorithmic difficulties that have motivated a wide range of alternative approaches to mesh-adaptive splines, such as hierarchical B-splines \cite{Forsey:Bartels:1988,KVZB:2014}, THB-splines \cite{GJS:2012}, LR splines \cite{DLP:2013}, hierarchical T-splines \cite{ESLT:2015}, amongst many others. 

One major difficulty using T-splines for analysis has been pointed out by Buffa, Cho and Sangalli \cite{BCS:2010},  who showed that general T-spline meshes can induce linear dependent T-spline blending functions. This prohibits the use of T-splines as a basis for analytical purposes such as solving a discretized partial differential equation. 
This insight motivated the research on T-meshes that guarantee the linear independence of the corresponding T-spline blending functions, referred to as \emph{analysis-suitable T-meshes}.
Analysis-suitability has been characterized in terms of topological mesh properties \cite{ZSHS:2012} and, in an alternative approach, through the equivalent concept of Dual-Compatibility \cite{BBCS:2012}.
While Dual-Compatibility has been characterized in arbitrary dimensions \cite{BBSV:2014}, Analysis-Suitability as a topological criterion for linear independence of the T-spline functions is only available in the two-dimensional setting.

In this paper, we introduce analysis-suitable T-splines for those 3D meshes that are refinements of tensor-product meshes, and propose an algorithm for their local refinement, based on our preliminary work in \cite{Morgenstern:Peterseim:2015}. In addition, we generalize the algorithm from \cite{Morgenstern:Peterseim:2015} by introducing a grading parameter $m$ that represents the number of children in a single elements' refinement. This allows the user to fully control how local the refinement shall be. Choosing $m$ large yields meshes with very local refinement, while a small $m$ will cause more wide-spreaded refinement. The former yields a smaller number of degrees of freedom, while the latter reduces the overlap of the basis functions and hence provides sparser Galerkin and collocation matrices.

This paper is organized as follows. Section~\ref{sec: refinement} defines the initial mesh and basic refinement steps and introduces our new refinement algorithm. Section~\ref{sec: adm meshes} then characterizes the class of `admissible meshes' generated by this algorithm.
In Section~\ref{sec: Tspline def} we give a brief definition of trivariate odd-degree T-splines.
In Section~\ref{sec: AS} we give an abstract definition of Analysis-Suitability in the 3D setting and prove that all admissible meshes are analysis-suitable. In Section~\ref{sec: DC} we define dual-compatible meshes, and prove that  analysis-suitability and dual-compatibility are equivalent, and that all dual-compatible meshes provide linear independent T-spline functions. (Figure~\ref{fig: overview} illustrates this ``long way'' to linear independence.) Section~\ref{sec: complexity} proves linear complexity of the refinement procedure, and conclusions and an outlook to future work are finally given in Section~\ref{sec: conclusions}.
\begin{figure}[ht]
\centering
\begin{tabular}{l@{\quad}c@{\quad}c}
& \small Symbol & \small Section \\\hline
refinement algorithm & $\refine$ & \ref{sec: refinement} \\
admissible meshes & $\A$ & \ref{sec: adm meshes} \\
analysis-suitable meshes & $\AS$ & \ref{sec: AS} \\
dual-compatible meshes & $\DC$ & \ref{sec: DC}\\\hline
\end{tabular}\bigskip
\[
\refine(\A)\stackrel{\text{Theorem~\ref{thm: ref works}}}\subseteq
\A\stackrel{\text{Theorem~\ref{thm: A in AS}}}\subseteq
\AS\stackrel{\text{Theorem~\ref{thm: AS in DC}}}\EQ
\DC\stackrel{\text{Theorem~\ref{thm: DC has dual basis}}}\subseteq
\left[\parbox{3.25cm}{\centering \small meshes with linearly independent T-splines}\right]
\]
\caption{How we prove linear independence of the T-splines induced by the generated meshes.}
\label{fig: overview}
\end{figure}

\section{Adaptive mesh refinement}\label{sec: refinement}
This section defines the new refinement algorithm and characterizes the class of meshes which are generated by this algorithm.
The algorithm is essentially a 3D version of the one introduced in \cite{Morgenstern:Peterseim:2015}, with the additional feature of variable grading.
The initial mesh is assumed to have a very simple structure. In the context of IGA, the partitioned rectangular domain is referred to as \emph{index domain}. This is, we assume that the \emph{physical domain} (on which, e.g., a PDE is to be solved) is obtained by a continuous map from the active region (cf.\ Section~\ref{sec: DC}), which is a subset of the index domain. Throughout this paper, we focus on the mesh refinement only, and therefore we will only consider the index domain. For the parametrization and refinement of the T-spline blending functions, we refer to \cite{SLSH:2012}.

\begin{df}[Initial mesh, element]
Given $\tilde X,\tilde Y,\tilde Z\in\mathbb N$, the initial mesh $\G_0$ is a tensor product mesh consisting of closed cubes (also denoted \emph{elements}) with side length 1, i.e.,
\[\G_0\sei\Bigl\{[x-1,x]\times[y-1,y]\times[z-1,z]\mid x\in\{1,\dots,\tilde X\},y\in\{1,\dots,\tilde Y\},z\in\{1,\dots,\tilde Z\}\Bigr\}.\]
The domain partitioned by $\G_0$ is denoted by $\Omega\sei (0,\tilde X)\times (0,\tilde Y)\times(0,\tilde Z)$.
\end{df}

The key property of the refinement algorithm will be that refinement of an element $K$ is allowed only if elements in a certain neighbourhood are sufficiently fine. The size of this neighbourhood, which is denoted $(\mathbf p,m)$-patch and defined through the definitions below, depends on the size of $K$, the polynomial degree $\mathbf p=(p_1,p_2,p_3)$ of the T-spline functions, and the grading parameter $m$. For the sake of legibility, we assume that $p_1,p_2,p_3$ are odd and greater or equal to 3. (For comments on even polynomial degrees, see Section~\ref{sec: conclusions}.)

\begin{df}[Level]
The \emph{level} of an element $K$ is defined by \[\ell(K)\sei-\log_m|K|,\] where $m$ is the manually chosen grading parameter, i.e., the number of children in a single elements' refinement, and $|K|$ denotes the volume of $K$.
This implies that all elements of the initial mesh have level zero and that the refinement of an element $K$ yields $m$ elements of level $\ell(K)+1$.
\end{df}

\begin{df}[Vector-valued distance]\label{df: distance}
Given $x\in\barOmega$ and an element $K$, we define their distance 
as the componentwise absolute value of the difference between $x$ and the midpoint of $K$,
\begin{align*}
\Dist(K,x)&\sei\operatorname{abs}\bigl(\midp(K)-x\bigr)\ \in\reell^3,\\
\text{with}\quad\operatorname{abs}(y) &\sei \bigl(\lvert y_1\rvert, \lvert y_2\rvert, \lvert y_3\rvert\bigr).
\end{align*}
For two elements $K_1,K_2$, we define the shorthand notation 
\[\Dist(K_1,K_2)\sei\operatorname{abs}\bigl(\midp(K_1)-\midp(K_2)\bigr).\]
\end{df}

\begin{df}\label{df: magic patch}
Given an element $K$, a grading parameter $m\ge2$ and the polynomial degree $\mathbf p=(p_1,p_2,p_3)$, 
we define the open environment
\begin{align*}
\U(K)&\sei\{x\in\Omega\mid\Dist(K,x)<\D(\ell(K))\},
\shortintertext{where}
\D(k)&\sei\begin{cases}m^{-k/3}\,\bigl(p_1+\tfrac32,p_2+\tfrac32,p_3+\tfrac32\bigr)&\text{if }k=0\bmod3,
\\[.7ex]
m^{-(k-1)/3}\,\bigl(\tfrac{p_1+3/2}m,p_2+\tfrac32,p_3+\tfrac32\bigr)&\text{if }k=1\bmod3,
\\[.7ex]
m^{-(k-2)/3}\,\bigl(\tfrac{p_1+3/2}m,\tfrac{p_2+3/2}m,p_3+\tfrac32\bigr)&\text{if }k=2\bmod3.\end{cases}
\intertext{The $(\mathbf p,m)$-patch of $K$ is defined as the set of all elements that intersect with environment of $K$,}
\patch\G K &\sei \{K'\in\G\mid  K'\cap\U(K)\neq\emptyset\}.
\end{align*}
Note as a technical detail that this definition does \emph{not} require that $K\in\G$. See also Figure~\ref{fig: magic patch examples} for examples.
\end{df}

\begin{rem}
By definition, the size of the $(\mathbf p,m)$-patch of an element $K$ scales linearly with the size of $K$ and with the polynomial degree $\mathbf p$. Since $\D(k)$ is decreasing in $m$, choosing $m$ large will cause small $(\mathbf p,m)$-patches and hence more localized refinement.
\end{rem}

\begin{figure}[ht]
\centering
\includegraphics[width=.3\textwidth]{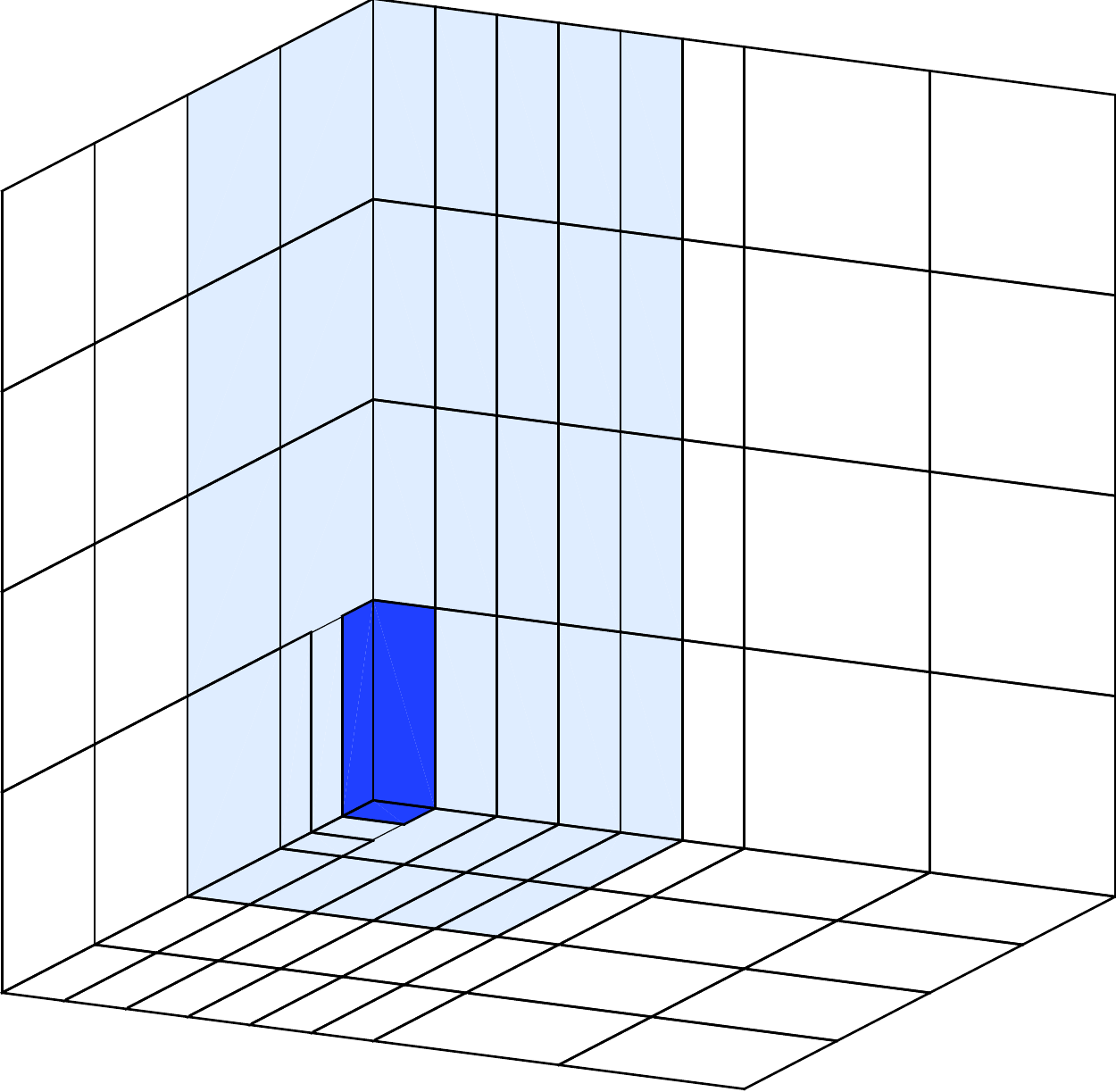}%
\hspace{.049\textwidth}%
\includegraphics[width=.3\textwidth]{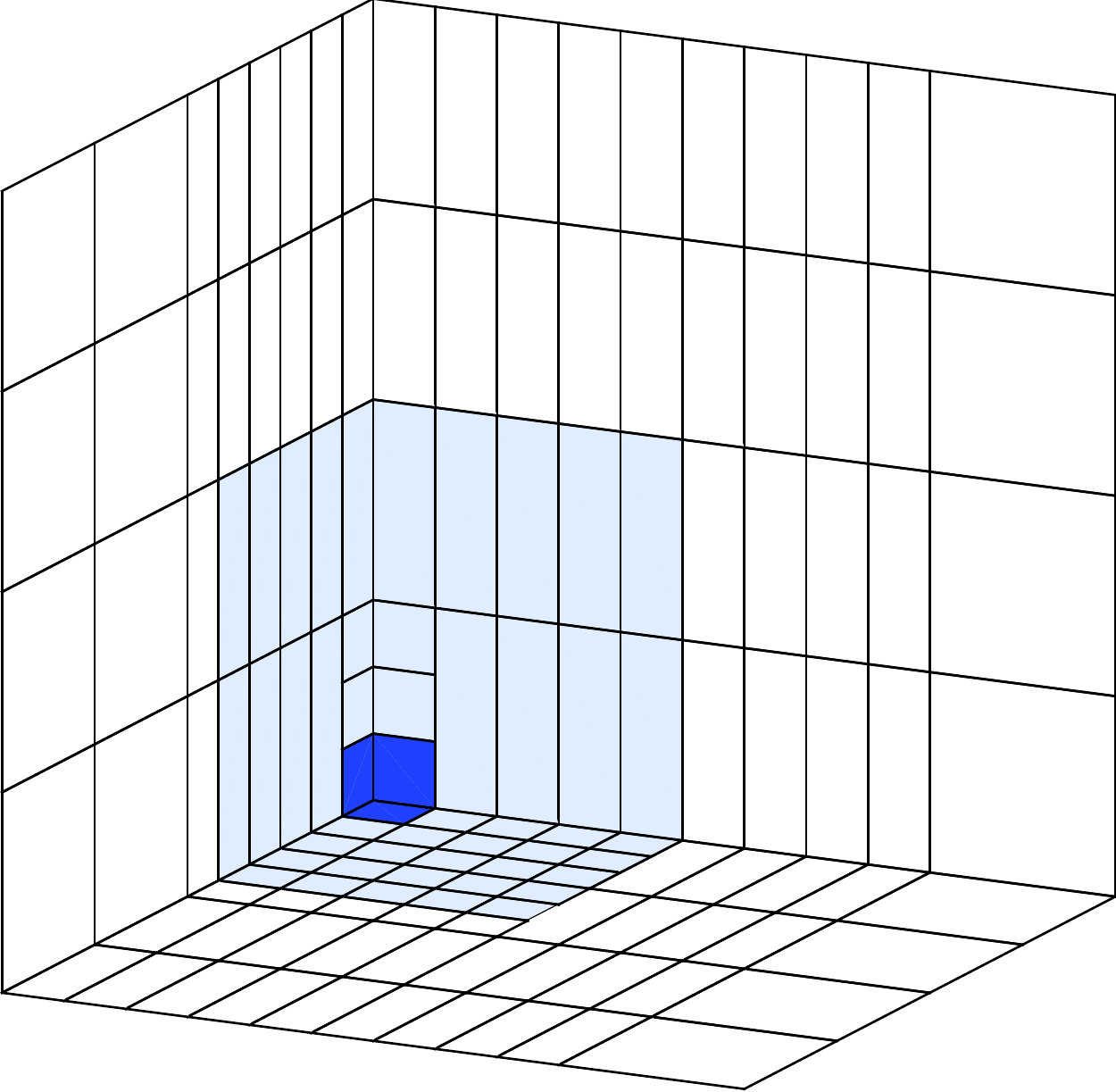}%
\hspace{.049\textwidth}%
\includegraphics[width=.3\textwidth]{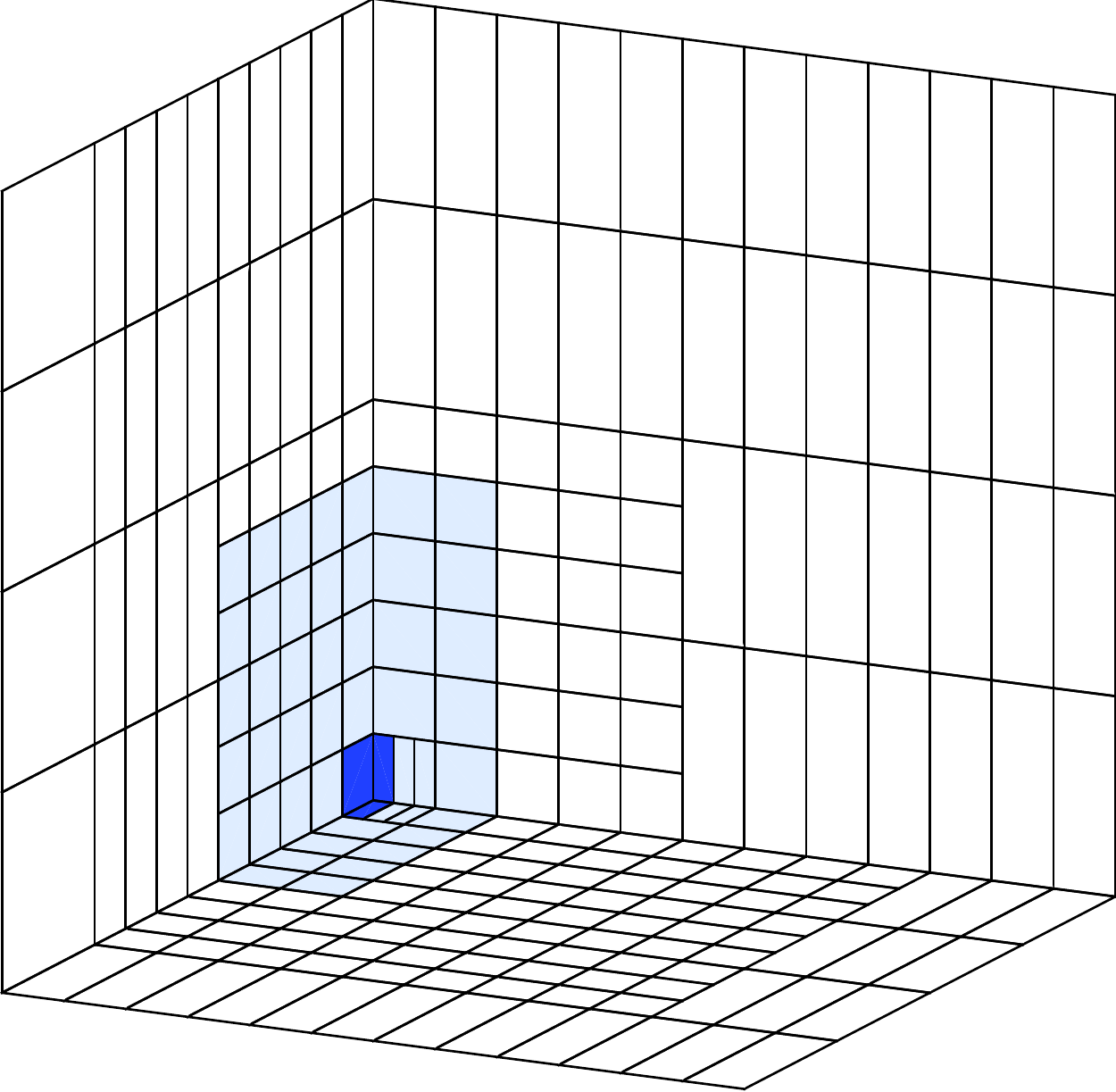}%
\caption{Examples for the $(\mathbf p,m)$-patch of an element $K$, for $\mathbf p=(3,3,3)$, $m=3$ and $\ell(K)=2,3,4$.}
\label{fig: magic patch examples}
\end{figure}

In the subsequent definitions, we will give a detailed description of the elementary subdivision steps and then present the new refinement algorithm. 

\begin{df}[Subdivision of an element]
Given an arbitrary element $K=[x,x+\tilde x]\times[y,y+\tilde y]\times[z,z+\tilde z]$, where $x, y,z, \tilde x,\tilde y,\tilde z\in\mathbb{R}$ and $\tilde x,\tilde y,\tilde z>0$, we define the operators
\begin{align*}
\subdiv_\mathrm x(K) &\sei \bigl\{\,[x+\tfrac {j-1}m\tilde x,x+\tfrac jm\tilde x]\times[y,y+\tilde y]\times[z,z+\tilde z]\mid j\in\{1,\dots,m\}\bigr\},\\
\enspace\subdiv_\mathrm y(K) &\sei \bigl\{\,[x,x+\tilde x]\times[y+\tfrac {j-1}m\tilde y,y+\tfrac jm\tilde y]\times[z,z+\tilde z]\mid j\in\{1,\dots,m\}\bigr\},
\\\text{and}\enspace
\enspace\subdiv_\mathrm z(K) &\sei \bigl\{\,[x,x+\tilde x]\times[y,y+\tilde y]\times[z+\tfrac {j-1}m\tilde z,z+\tfrac jm\tilde z]\mid j\in\{1,\dots,m\}\bigr\}.
\end{align*}
These operators will be used for $x$-, $y$-, and $z$-orthogonal subdivisions in the refinement procedure. Their output is illustrated in Figure~\ref{fig: elemref}.
\end{df}
\begin{df}[Subdivision]\label{df: subdivision}%
Given a mesh $\G$ and an element $K\in\G$, we denote by $\subdiv(\G,K)$ the mesh that results from a level-dependent subdivision of $K$,
\begin{align*}
\subdiv(\G,K)&\sei\G\setminus\{K\}\cup\child(K),\\
\text{with}\enspace\child(K)&\sei
\begin{cases}\subdiv_\mathrm x(K)&\text{if }\ell(K)=0\bmod3,\\\subdiv_\mathrm y(K)&\text{if }\ell(K)=1\bmod3,\\\subdiv_\mathrm z(K)&\text{if }\ell(K)=2\bmod3.\end{cases}
\end{align*}
\begin{figure}[ht]
\centering
\includegraphics{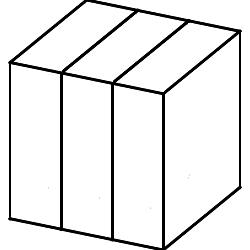}\qquad
\includegraphics{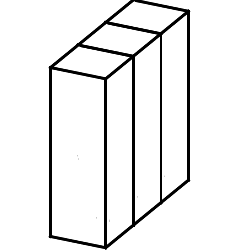}
\includegraphics{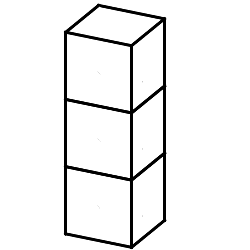}
\caption{Elementary subdivision routines for $m=3$:
$x$-orthogonal subdivision of an element with level 0 (left),
$y$-orthogonal subdivision of an element with level 1 (middle), and
$z$-orthogonal subdivision of an element with level 2 (right).}
\label{fig: elemref}
\end{figure}
\end{df}
\begin{df}[Multiple subdivisions]
We introduce the shorthand notation $\subdiv(\G,\M)$ for the subdivision of several elements $\M=\{K_1,\dots,K_J\}\subseteq\G$, defined by successive subdivisions in an arbitrary order,
\[\subdiv(\G,\M)\sei\subdiv(\subdiv(\dots\subdiv(\G,K_1),\dots),K_J).\]
\end{df}

We will now define the new refinement algorithm through the subdivision of a superset $\clos(\G,\M)$ of the marked elements $\M$. In the remaining part of this section, we characterize the class of meshes generated by this refinement algorithm.

\begin{alg}[Closure]\label{alg: closure}
Given a mesh $\G$ and a set of marked elements $\M\subseteq\G$ to be refined, the \emph{closure} $\clos(\G,\M)$ of $\M$ is computed as follows.
\begin{algorithmic}
\STATE $\Mtilde\sei \M$
\REPEAT
\FORALL{$K\in\Mtilde$}
\STATE $\Mtilde\sei\Mtilde\cup\bigl\{K'\in\patch\G K\mid \ell(K')<\ell(K)\bigr\}$
\ENDFOR
\UNTIL{$\Mtilde$ stops growing}
\RETURN $\clos(\G,\M)=\Mtilde$
\end{algorithmic}
\end{alg}

\begin{alg}[Refinement]\label{alg: refinement}

Given a mesh $\G$ and a set of marked elements $\M\subseteq\G$ to be refined, $\refine(\G,\M)$ is defined by \[\refine(\G,\M)\sei\subdiv(\G,\clos(\G,\M)).\]
An example of this algorithm is given in Figure~\ref{fig: refinement algorithm}.
\end{alg}
\begin{figure}[ht]
\centering\Large
\includegraphics[width=.25\textwidth]{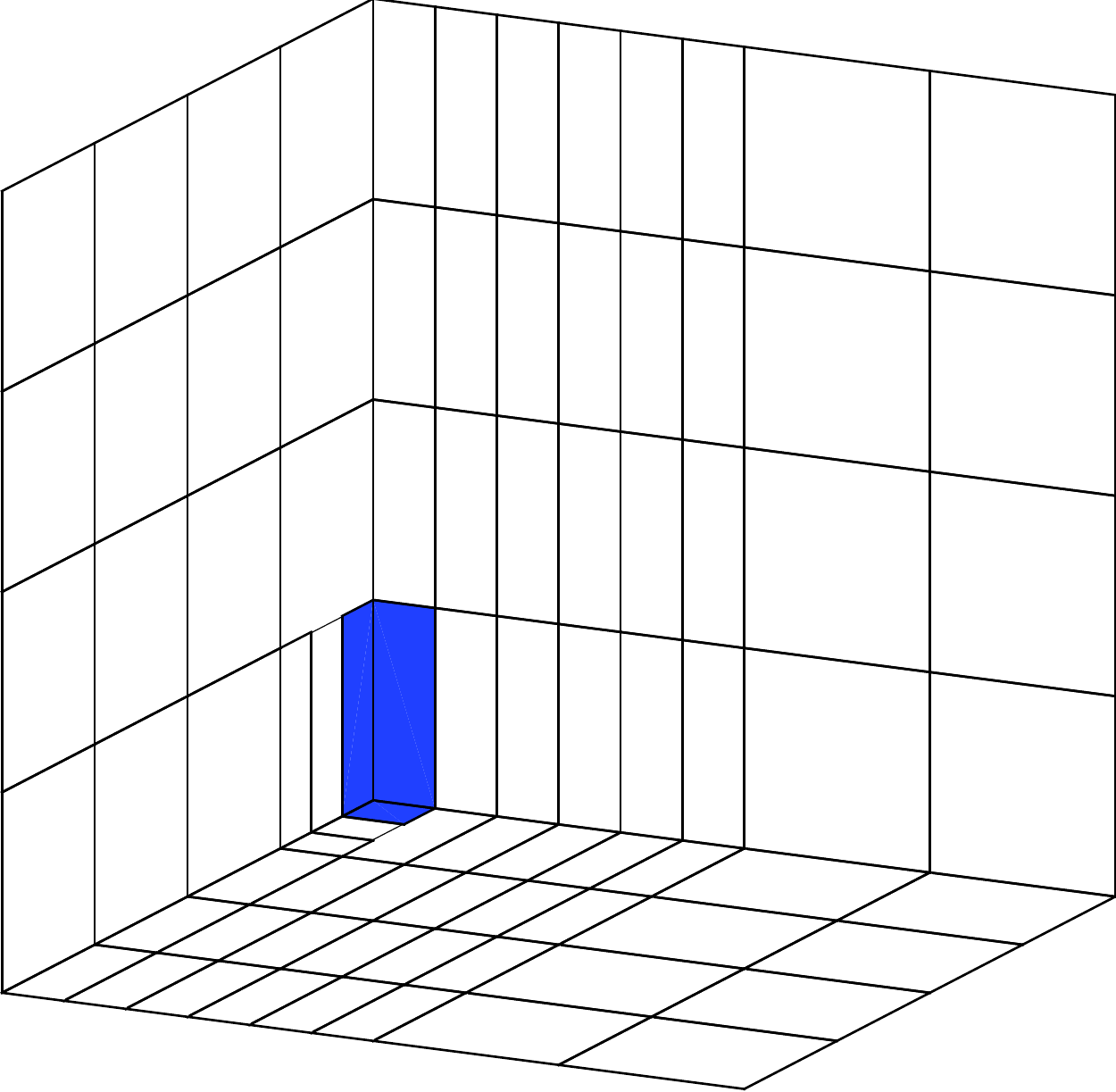}%
\makebox[.125\textwidth]{\raisebox{.125\textwidth}{$\stackrel{1^\text{st}\text{ iter.}}\rightarrow$}}%
\includegraphics[width=.25\textwidth]{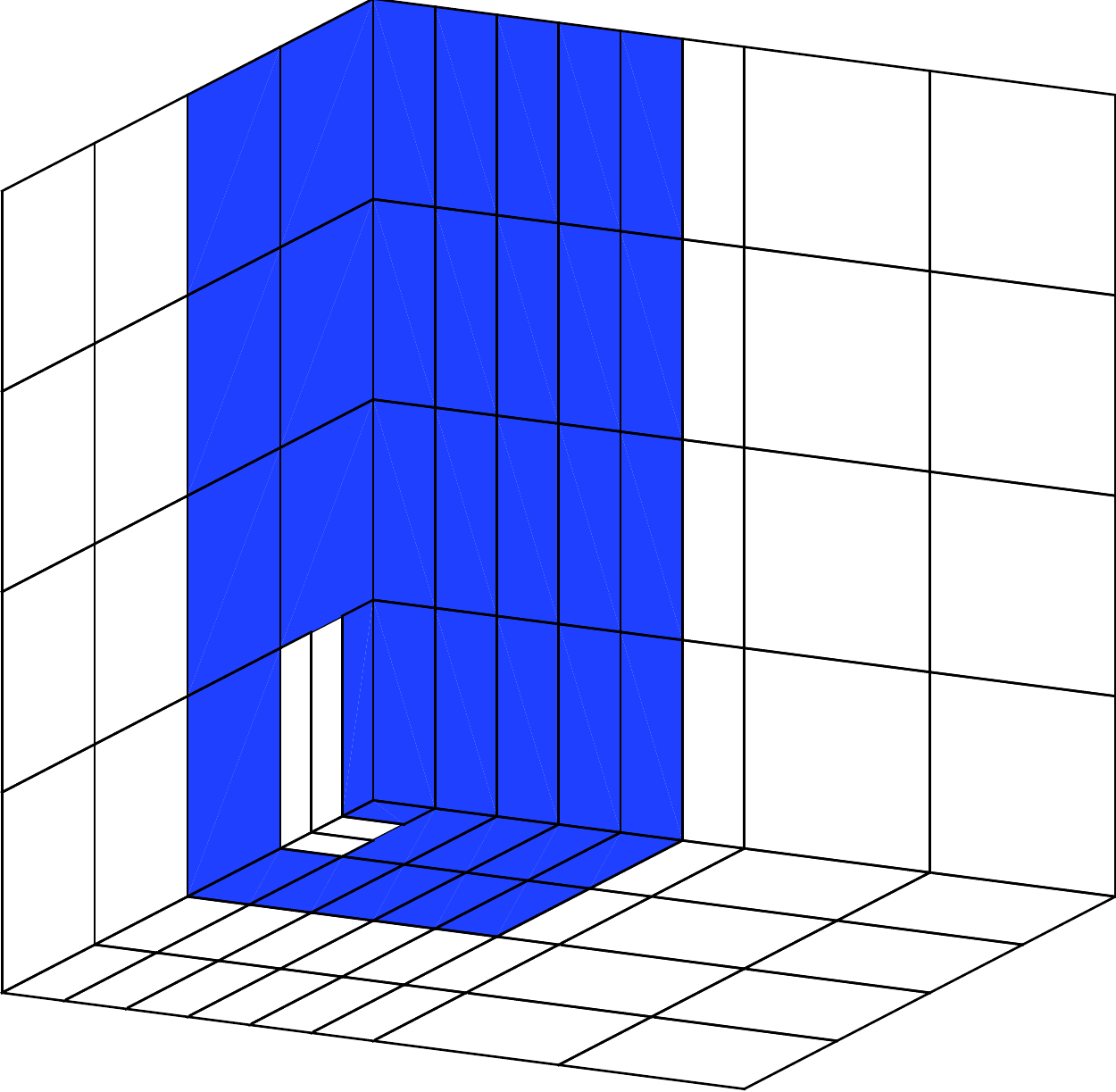}%
\makebox[.125\textwidth]{\raisebox{.125\textwidth}{$\stackrel{2^\text{nd}\text{ iter.}}\rightarrow$}}%
\includegraphics[width=.25\textwidth]{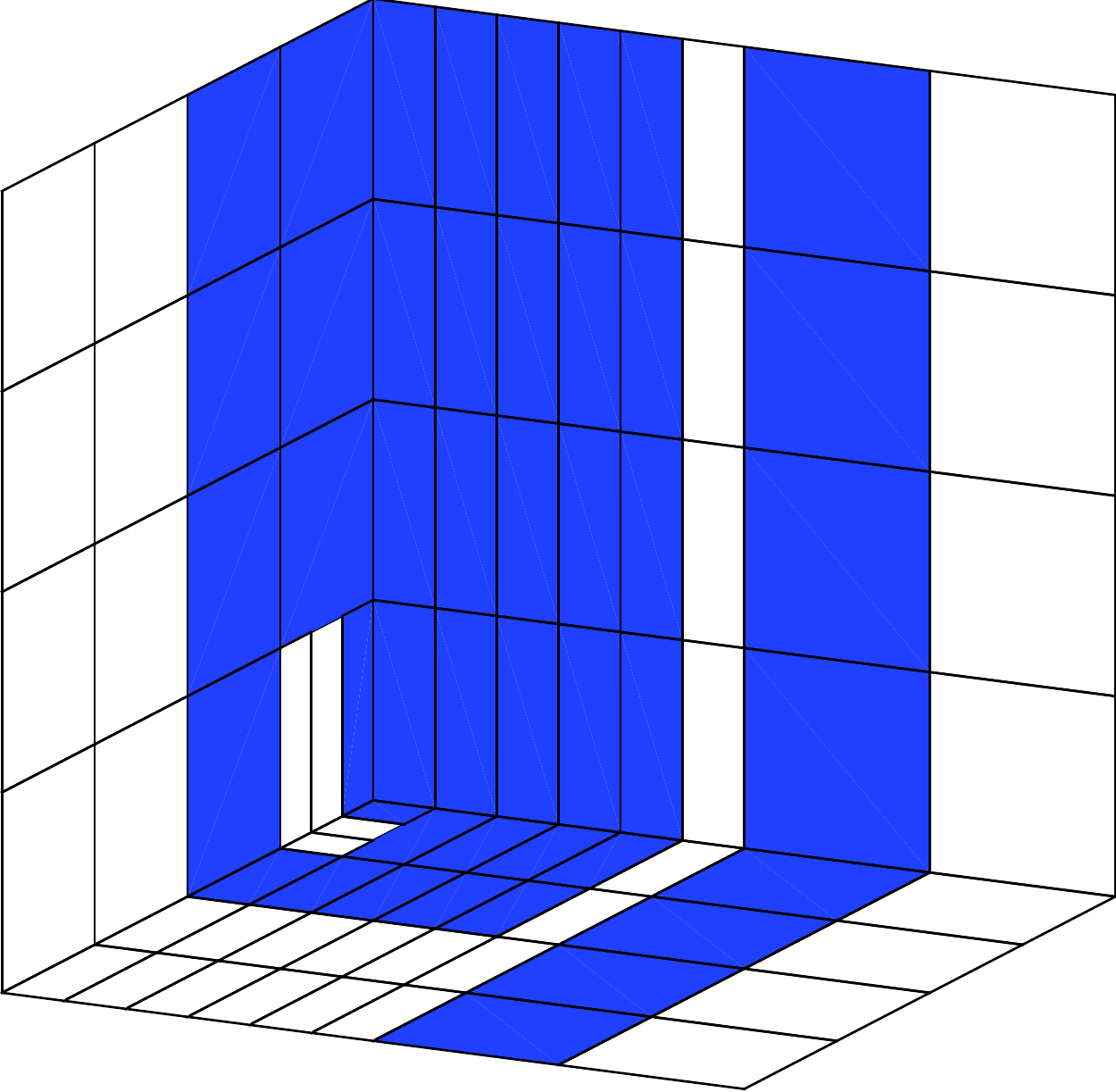}\\
\makebox[.125\textwidth]{\raisebox{.125\textwidth}{$\stackrel{3^\text{rd}\text{ iter.}}\rightarrow$}}%
\includegraphics[width=.25\textwidth]{refex_15}%
\makebox[.125\textwidth]{\raisebox{.125\textwidth}{$\stackrel{\text{subdiv.}}\rightarrow$}}%
\includegraphics[width=.25\textwidth]{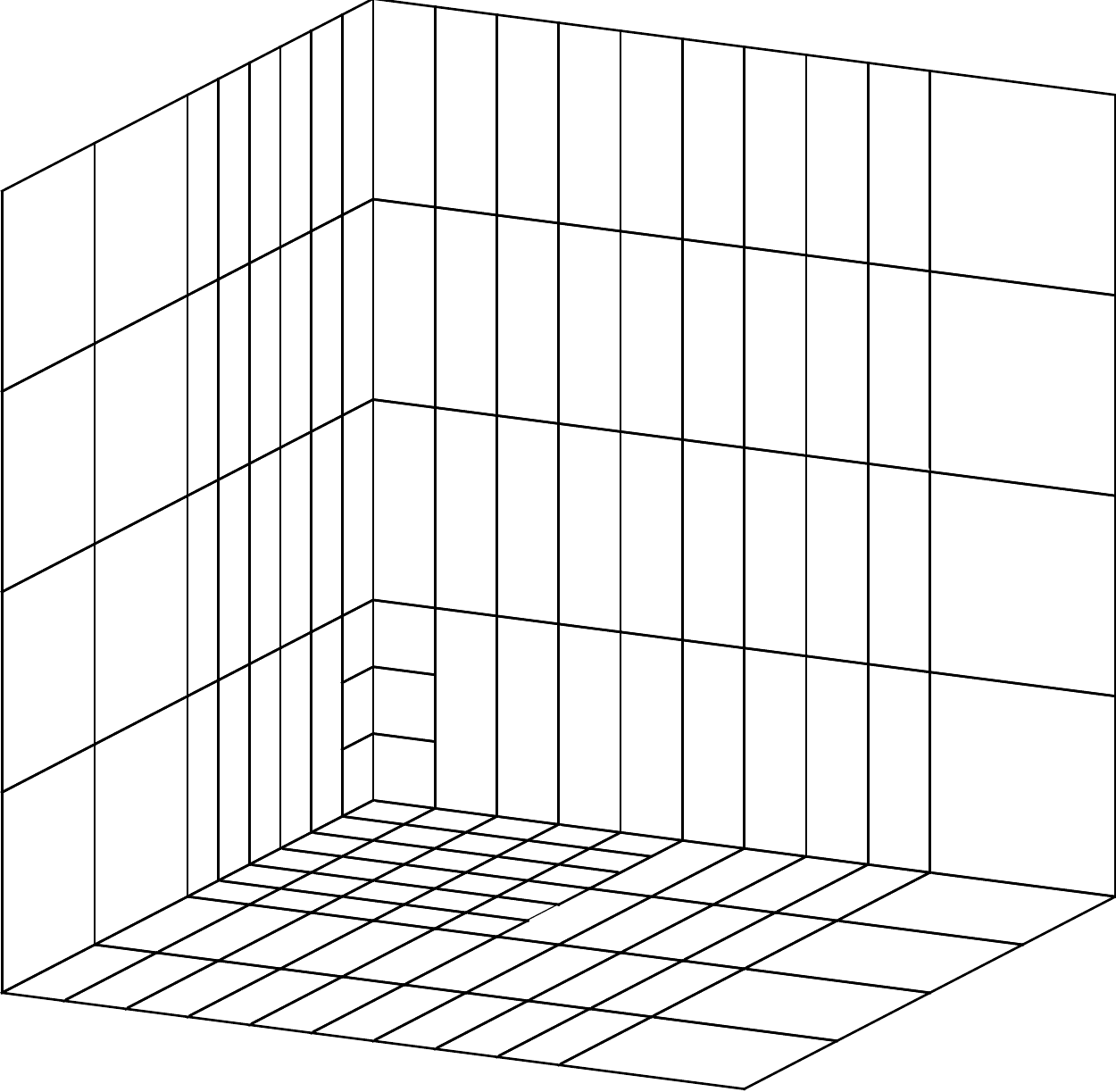}
\caption{Example for Algorithm~\ref{alg: refinement}, with $\mathbf p=(3,3,3)$, $m=3$ and $\M=\{K\}$ with $\ell(K)=2$. In the first iteration of the \textbf{for}-loop, all coarser (level 1) elements in the $(\mathbf p,m)$-patch of $K$ are marked as well. in the second iteration, all coarser (level 0) ``neighbours'' of those elements are also marked. Since there are no elements that are coarser than level 0, the third iteration does not change anything. Hence the \textbf{for}-loop ends, and all marked elements are subdivided in the directions that correspond to their levels. }
\label{fig: refinement algorithm}
\end{figure}

\begin{figure}
\centering
\begin{subfigure}[b]{.3\textwidth}
\includegraphics[width=\textwidth]{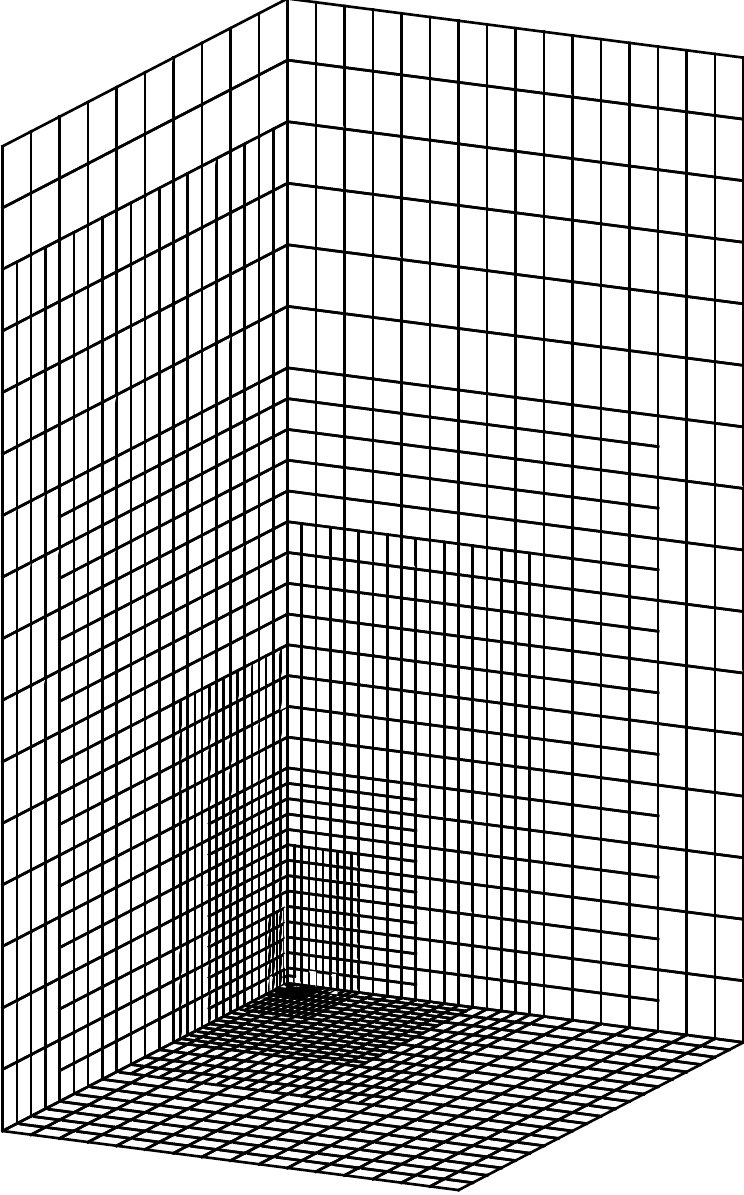}
\caption{$m=2$}\label{sfig: refex 2}
\end{subfigure}\quad
\begin{subfigure}[b]{.3\textwidth}
\includegraphics[width=\textwidth]{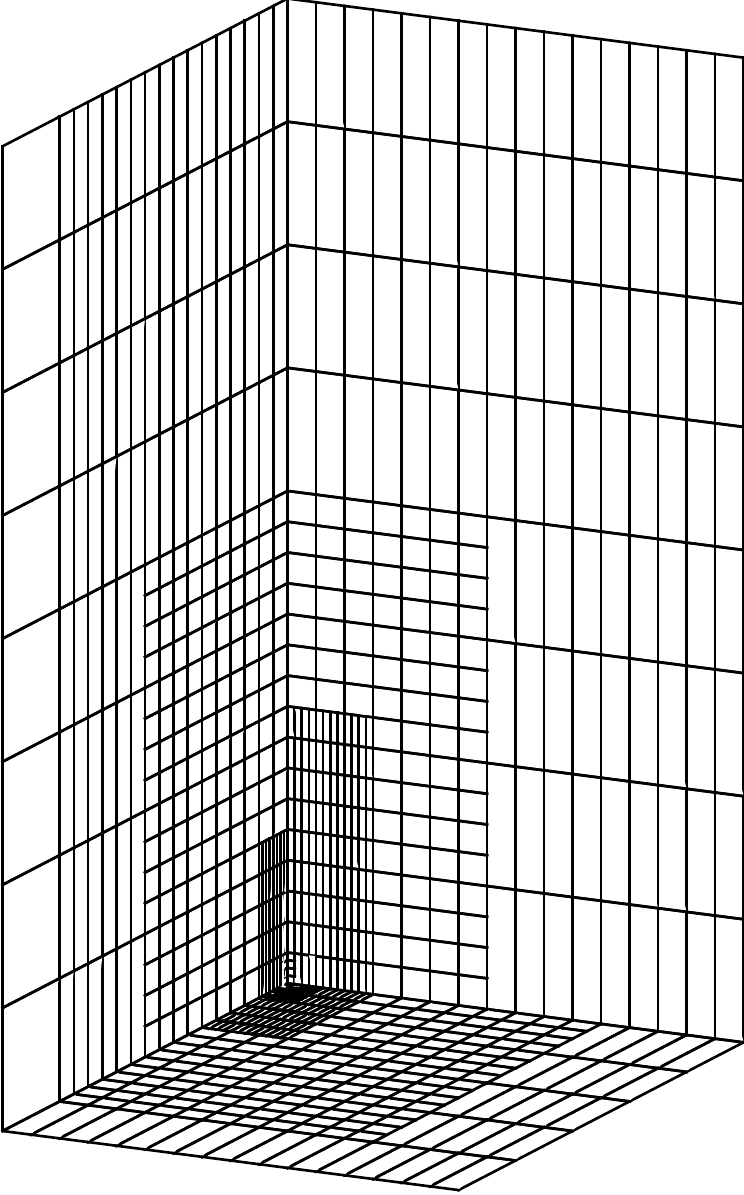}
\caption{$m=4$}\label{sfig: refex 4}
\end{subfigure}\quad
\begin{subfigure}[b]{.3\textwidth}
\includegraphics[width=\textwidth]{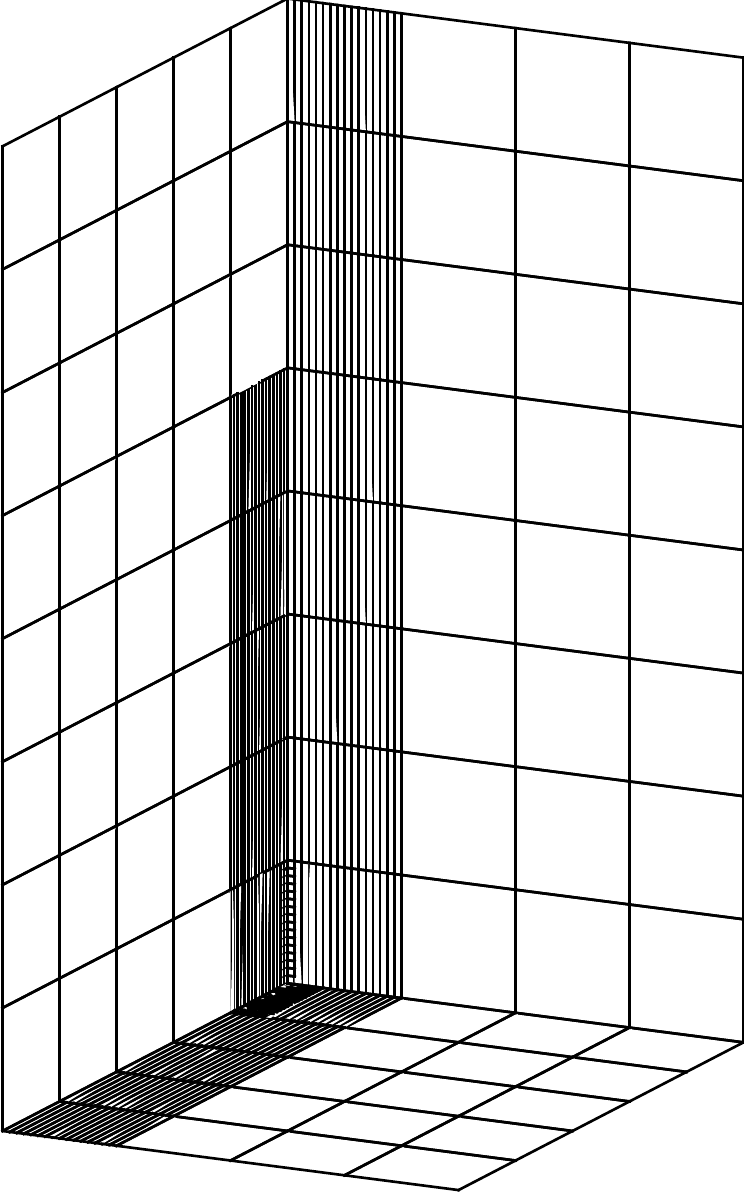}
\caption{$m=16$}\label{sfig: refex 16}
\end{subfigure}
\caption{Refinement examples for $\mathbf p=(3,3,3)$ and different choices of $m$. In all cases, the initial mesh consists of $4\times5\times8$ cubes of size $1\times1\times1$, and is refined by marking the lower left front corner element repeatedly until it is of the size $\tfrac1{16}\times\tfrac1{16}\times\tfrac1{16}$. }
\label{fig: refex}
\end{figure}
\begin{ex}
Consider an initial mesh that consists of $4\times5\times8$ cubes of size $1\times1\times1$.
We refine the mesh by marking the lower left front corner element repeatedly until it is of the size $\tfrac1{16}\times\tfrac1{16}\times\tfrac1{16}$.
The resulting meshes for different choices of $m$ are illustrated in Figure~\ref{fig: refex}, and the results are listed below.
\begin{center}
\begin{tabular}{c|c|c|c}
Figure & $m$ & \parbox{7em}{\centering number of \\ refinement steps} & \parbox{6em}{\centering number of \\ new elements} \\\hline
\ref{sfig: refex 2} & 2 & 12 & 10728 \\
\ref{sfig: refex 4} & 4 & 6 & 3175 \\
\ref{sfig: refex 16} & 16 & 3 & 1030
\end{tabular}
\end{center}
\end{ex}

\section{Admissible meshes}\label{sec: adm meshes}
In the subsequent definitions, we introduce a class of admissible meshes. We will then prove that this class coindices with the meshes generated by Algorithm~\ref{alg: refinement}.

\begin{df}[$(\mathbf p,m)$-admissible subdivisions]\label{df: adm. subdivision}%
Given a mesh $\G$ and an element $K\in\G$, the subdivision of $K$ is called \emph{$(\mathbf p,m)$-admissible} if all $K'\in\patch\G K$ satisfy $\ell(K')\ge\ell(K)$.

In the case of several elements $\M=\{K_1,\dots,K_J\}\subseteq\G$, the subdivision $\subdiv(\G,\M)$ is $(\mathbf p,m)$-admissible if there is an ordering $(\sigma(1),\dots,\sigma(J))$ (this is, if there is a permutation $\sigma$ of $\{1,\dots,J\}$) such that \[\subdiv(\G,\M)=\subdiv(\subdiv(\dots\subdiv(\G,K_{\sigma(1)}),\dots),K_{\sigma(J)})\] is a concatenation of $(\mathbf p,m)$-admissible subdivisions.
\end{df}
\begin{df}[Admissible mesh]\label{df: admissible mesh}
A refinement $\G$ of $\G_0$ is \emph{$(\mathbf p,m)$-admissible} if there is a sequence of meshes $\G_1,\dots,\G_J=\G$ and markings $\M_j\subseteq\G_j$ for $j=0,\dots,J-1$, such that $\G_{j+1}=\subdiv(\G_j,\M_j)$ is an $(\mathbf p,m)$-admissible subdivision for all $j=0,\dots,J-1$. The set of all $(\mathbf p,m)$-admissible meshes, which is the initial mesh and its $(\mathbf p,m)$-admissible refinements, is denoted by $\A$. For the sake of legibility, we write `admissible' instead of `$(\mathbf p,m)$-admissible' throughout the rest of this paper.
\end{df}

\begin{thm}\label{thm: ref works}
Any admissible mesh $\G$ and any set of marked elements $\M\subseteq\G$ satisfy \[\refine(\G,\M)\in\A.\]
\end{thm}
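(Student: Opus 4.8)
My plan is to strip off the already-constructed history of $\G$ and reduce the claim to a statement about $\subdiv$ and $\clos$ alone, then feed in two facts: a purely geometric nesting property of the environments $\U(\cdot)$, and the defining property of the closure. Since $\G\in\A$, Definition~\ref{df: admissible mesh} gives a chain $\G_0,\dots,\G_J=\G$ of admissible subdivisions, so by concatenation it suffices to show that $\refine(\G,\M)=\subdiv(\G,\Mtilde)$, with $\Mtilde\sei\clos(\G,\M)$, arises from $\G$ by a finite sequence of admissible single subdivisions. The loop in Algorithm~\ref{alg: closure} terminates because $\G$ is finite and $\Mtilde$ only grows; its stopping criterion yields the \emph{closure property}: if $K\in\Mtilde$, $K'\in\patch\G K$ and $\ell(K')<\ell(K)$, then $K'\in\Mtilde$. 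Moreover $\Mtilde\subseteq\G$.

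Next I would establish the geometric ingredient: for every element $K$ of level $\ge1$, writing $\hat K$ for its parent (the level-$(\ell(K)-1)$ box containing $K$), one has $\U(K)\subseteq\U(\hat K)$, and by iteration $\U(K)\subseteq\U(A)$ for every ancestor $A$ of $K$. By the cyclic symmetry of $\D$ in the three directions it suffices to treat $\ell(K)\equiv0\bmod3$, say $\ell(K)=3t$, so $\hat K$ has level $3t-1\equiv2\bmod3$ and $K$ is one of the $m$ $z$-children of $\hat K$. From the formula for $\D$ one reads that $\D(3t)$ and $\D(3t-1)$ coincide in the first two components while $\D(3t-1)_3=m\,\D(3t)_3$, and $\midp(K)$, $\midp(\hat K)$ differ only in the third coordinate, by at most $\tfrac{m-1}2\,m^{-t}$; the inclusion then reduces to the trivial inequality $1\le m$.

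Finally I would run the ordering argument. Enumerate $\Mtilde=\{K_1,\dots,K_n\}$ with $\ell(K_1)\le\dots\le\ell(K_n)$, set $\G^{(0)}\sei\G$ and $\G^{(i)}\sei\subdiv(\G^{(i-1)},K_i)$; since the $K_i$ are distinct, hence interior-disjoint, mesh elements of $\G$, each step is well defined, the order is irrelevant, and $\G^{(n)}=\subdiv(\G,\Mtilde)$. It remains to show the $i$-th subdivision is admissible, i.e.\ every $E\in\G^{(i-1)}$ meeting $\U(K_i)$ has $\ell(E)\ge\ell(K_i)$. Suppose some such $E$ has $\ell(E)<\ell(K_i)$, and let $\hat E\in\G$ be the unique element containing $E$; then $\hat E\in\patch\G{K_i}$ and $\ell(\hat E)\le\ell(E)<\ell(K_i)$, so $\hat E\in\Mtilde$ by the closure property, and — having smaller level than $K_i$ — it precedes $K_i$ in the enumeration and is already subdivided in $\G^{(i-1)}$. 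Hence $E$ is a \emph{proper} child of $\hat E$, so $\ell(\hat E)=\ell(E)-1\le\ell(K_i)-2$. Let $K^*$ be the ancestor of $K_i$ of level $\ell(\hat E)+1$; it is a proper ancestor, hence subdivided at some step $\G_j\to\G_{j+1}$ of the admissible chain producing $\G$, and admissibility of that step forces every element of $\G_j$ meeting $\U(K^*)$ to have level $\ge\ell(K^*)=\ell(\hat E)+1$. But the unique $F\in\G_j$ with $\hat E\subseteq F$ satisfies $\ell(F)\le\ell(\hat E)$, while by the geometric lemma $\emptyset\ne E\cap\U(K_i)\subseteq E\cap\U(K^*)\subseteq F\cap\U(K^*)$ — a contradiction. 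Thus every step is admissible, $\subdiv(\G,\Mtilde)\in\A$, and the theorem follows.

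I expect the geometric nesting lemma to be the main obstacle: one must check that the piecewise definition of $\D$, together with its `staircase' factors $1/m$, is genuinely stable under passing to a child in each residue class modulo $3$, and track the midpoint shift carefully. A secondary nuisance is the bookkeeping in the last step — relating elements of the refined mesh $\G^{(i-1)}$ back to elements of $\G$ and then to elements of the intermediate meshes $\G_j$ — which rests on the fact that every mesh in the construction refines all its predecessors.
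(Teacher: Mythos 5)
Your proof is correct and follows essentially the same route as the paper's: subdivide the closure $\clos(\G,\M)$ in order of non-decreasing level, and refute an offending coarse element $E$ by passing to its ancestor $\hat E\in\G$ (which the closure property forces into $\Mtilde$, hence $\ell(\hat E)\le\ell(K_i)-2$) and then to an ancestor of $K_i$ whose historical subdivision could not have been admissible. The only differences are that you inline what the paper factors out as Lemma~\ref{lma: magic patches are nested} (nesting of the environments $\U$) and Lemma~\ref{lma: levels change slowly} (local quasi-uniformity), and that your phrase ``every element of $\G_j$ meeting $\U(K^*)$'' should refer to the intermediate mesh at the moment $K^*$ is actually subdivided within that step --- a harmless slip, since the element of that intermediate mesh containing $\hat E$ still has level at most $\ell(\hat E)<\ell(K^*)$ and still meets $\U(K^*)$.
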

The proof of Theorem~\ref{thm: ref works} given at the end of this section relies on the subsequent results.

\begin{lma}\label{lma: magic patches are nested}
Given an admissible mesh $\G$ and two nested elements $K\subseteq\hat K$ with $K,\hat K\in\tcup\A$, the corresponding $(\mathbf p,m)$-patches are nested in the sense $\patch\G K\subseteq\patch\G{\hat K}$.
\end{lma}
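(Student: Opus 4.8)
The plan is to reduce the set inclusion $\patch\G K\subseteq\patch\G{\hat K}$ to a pointwise inclusion of the underlying open environments, namely $\U(K)\subseteq\U(\hat K)$. Indeed, if $\U(K)\subseteq\U(\hat K)$, then any element $K'\in\G$ meeting $\U(K)$ also meets $\U(\hat K)$, which is exactly what is needed. So the whole task is to show that enlarging an element can only enlarge its $(\mathbf p,m)$-environment.

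First I would recall the relevant geometry. Since $K$ and $\hat K$ lie in admissible meshes, each of them is a box obtained from $\G_0$ by a sequence of the elementary subdivisions $\subdiv_\mathrm x,\subdiv_\mathrm y,\subdiv_\mathrm z$, which are applied cyclically according to level modulo $3$. Hence $K\subseteq\hat K$ forces $\ell(K)\ge\ell(\hat K)$, and in fact $K$ is obtained from $\hat K$ by exactly $\ell(K)-\ell(\hat K)$ further subdivision steps in the prescribed cyclic directions. Writing $k\sei\ell(\hat K)$ and $k'\sei\ell(K)$, I would note that $\midp(K)$ and $\midp(\hat K)$ can then differ only in those coordinate directions that are actually subdivided between level $k$ and level $k'$, and in each such direction the displacement is bounded by half the side length of $\hat K$ in that direction — more precisely by a geometric sum of the successive halved child-sizes, which is strictly less than the full half-side-length of $\hat K$. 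The key quantitative claim is then
\[
\Dist(K,\hat K) + \D(k') \;\le\; \D(k)\qquad\text{(componentwise)},
\]
because this inequality, together with the triangle inequality for $\Dist$, yields $\Dist(\hat K,x)\le\Dist(K,x)+\Dist(K,\hat K)<\D(k')+\Dist(K,\hat K)\le\D(k)$ for every $x\in\U(K)$, hence $\U(K)\subseteq\U(\hat K)$.

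To verify the componentwise inequality I would treat one coordinate at a time and exploit the $3$-periodic structure of $\D$ and of the subdivision directions. It suffices by induction to handle a single subdivision step, i.e.\ the case $k'=k+1$, and then iterate. For that one step, in the coordinate that gets halved one must check that $\tfrac12(\text{side length of }\hat K\text{ in that direction}) + (\text{that component of }\D(k+1)) \le (\text{that component of }\D(k))$, and in the two untouched coordinates one needs the component of $\D(k+1)$ to be $\le$ the corresponding component of $\D(k)$ (here $\Dist$ contributes $0$). Both reduce to elementary arithmetic with the explicit formula for $\D$ in Definition~\ref{df: magic patch}: the factor $m^{-1/3}$ appearing when passing from one residue class to the next, the replacement of a term $p_i+\tfrac32$ by $\tfrac{p_i+3/2}{m}$ in the subdivided direction, and the side-length halving, have been calibrated precisely so that these inequalities hold (this calibration is essentially why $\D$ is defined the way it is). I expect the main obstacle to be purely bookkeeping: carefully matching "which direction is subdivided at level $k$" with "which component of $\D$ carries the $\tfrac1m$ factor at level $k$ versus $k+1$", and confirming the halved-side-length bound on the midpoint displacement is compatible with the gap $\D(k)-\D(k+1)$ in that direction. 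Once the single-step inequality is checked in each of the three residue classes of $k\bmod 3$, a telescoping/induction argument over the levels from $\ell(\hat K)$ up to $\ell(K)$ completes the proof.
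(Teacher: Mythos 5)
Your proposal follows essentially the same route as the paper's proof: reduce the patch inclusion to $\U(K)\subseteq\U(\hat K)$, establish the key componentwise inequality $\Dist(K,\hat K)+\D(\ell(K))\le\D(\ell(\hat K))$ for a single parent--child step in each residue class of the level modulo $3$, conclude via the triangle inequality for $\Dist$, and then iterate along the chain of ancestors. The only cosmetic difference is that you bound the midpoint displacement by half the parent's side length while the paper records it exactly; both suffice for the calibration of $\D$.
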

The proof is given in Appendix~\ref{apx: magic patches are nested}.

\begin{lma}[local quasi-uniformity]\label{lma: levels change slowly}
Given  $K\in\G\in\A$, 
any $K'\in\patch\G K$ satisfies $\ell(K')\ge\ell(K)-1$.
\end{lma}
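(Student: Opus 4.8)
The plan is to argue by induction on the refinement history that produces $\G\in\A$, i.e., on the length $J$ of the sequence $\G_0,\G_1,\dots,\G_J=\G$ from Definition~\ref{df: admissible mesh}. The base case $\G=\G_0$ is trivial, since every element of the initial mesh has level $0$, so $\ell(K')=0=\ell(K)\ge\ell(K)-1$ for every pair. For the inductive step, suppose the claim holds for $\G$ and let $\G^+=\subdiv(\G,\M)$ be an admissible subdivision; since an admissible multiple subdivision is a concatenation of single admissible subdivisions, it suffices to treat $\G^+=\subdiv(\G,\hat K)$ for a single element $\hat K\in\G$ whose subdivision is admissible, i.e., every $K''\in\patch\G{\hat K}$ satisfies $\ell(K'')\ge\ell(\hat K)$.

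Now take any $K\in\G^+$ and any $K'\in\patch{\G^+}K$; I must show $\ell(K')\ge\ell(K)-1$. Split into cases according to whether $K$ and $K'$ are old (in $\G$) or new (children of $\hat K$). The key quantitative tool is that subdivision only raises levels (children of $\hat K$ have level $\ell(\hat K)+1$) and the monotonicity/nesting of patches from Lemma~\ref{lma: magic patches are nested}: if $K$ is a child of $\hat K$ then $K\subseteq\hat K$, hence $\patch{\G}{K}\subseteq\patch{\G}{\hat K}$, and one also needs to compare $\patch{\G^+}{K}$ with the old patch $\patch{\G}{K}$ (replacing $\hat K$ by its children only shrinks the environment-intersecting set, so membership in $\patch{\G^+}{\cdot}$ is controlled by membership in $\patch{\G}{\cdot}$ together with the child/parent level relation). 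Concretely:
\begin{itemize}
\item If both $K,K'$ are old, then $K'\in\patch{\G^+}K$ forces $K'$ (or a parent of it — but $K'$ old means $K'$ itself) to already lie in $\patch\G K$ up to the replacement of $\hat K$'s children; the induction hypothesis for $\G$ gives $\ell(K')\ge\ell(K)-1$, and levels of surviving elements are unchanged.
\item If $K$ is new, write $K\in\child(\hat K)$, so $\ell(K)=\ell(\hat K)+1$ and $K\subseteq\hat K$. Then $K'\cap\U[\mathbf p,m](K)\ne\emptyset$ implies $K'\cap\U[\mathbf p,m](\hat K)\ne\emptyset$ because $\D[\mathbf p,m](\ell(K))\le\D[\mathbf p,m](\ell(\hat K))$ componentwise and $\midp(K)$ is within the environment of $\hat K$ — so $K'$ meets $\patch{\G^+}{\hat K}$, hence corresponds to an element of $\patch{\G}{\hat K}$ (itself if old, or its parent $\hat K$ if it is another child). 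In the first subcase admissibility of the subdivision gives $\ell(K')\ge\ell(\hat K)=\ell(K)-1$; in the second subcase $K'\in\child(\hat K)$ so $\ell(K')=\ell(\hat K)+1=\ell(K)$.
\item If $K$ is old and $K'$ is new, $K'\in\child(\hat K)$ so $\ell(K')=\ell(\hat K)+1$. Since $K'\cap\U[\mathbf p,m](K)\ne\emptyset$ and $K'\subseteq\hat K$, we get $\hat K\cap\U[\mathbf p,m](K)\ne\emptyset$, i.e.\ $\hat K\in\patch{\G}{K}$, so by the induction hypothesis $\ell(\hat K)\ge\ell(K)-1$, whence $\ell(K')=\ell(\hat K)+1\ge\ell(K)$.
\end{itemize}

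The main obstacle I expect is the bookkeeping in the case analysis — in particular making precise the passage ``$K'\in\patch{\G^+}{\cdot}$ implies a related element lies in $\patch{\G}{\cdot}$'' when $\hat K$ has been replaced by its children, since a new element $K'$ is not in $\G$ and one must consistently replace it by its parent $\hat K$ while tracking how that changes the level by exactly one. All of this rests on two already-available facts: that $\D[\mathbf p,m](k)$ is (componentwise) non-increasing in $k$, so finer elements have smaller environments, and the patch-nesting Lemma~\ref{lma: magic patches are nested}; the geometric inclusions $\U[\mathbf p,m](K)\subseteq$ (a translate within) $\U[\mathbf p,m](\hat K)$ for $K\in\child(\hat K)$ are then a short computation using $\Dist$ and should not present real difficulty.
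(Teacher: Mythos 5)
Your proposal is correct: the four configurations (old/new for $K$ and for $K'$) are all covered, each case closes properly, and there is no circularity, since Lemma~\ref{lma: magic patches are nested} is proved in the paper by a direct computation with $\D$, $\Dist$ and the element sizes, independently of the present lemma. However, your route is organized differently from the paper's. You run a forward induction over the whole refinement history, re-establishing the property for every element pair after each single admissible subdivision, which forces the old/new case analysis and the bookkeeping you flag as the main obstacle. The paper instead argues backwards from $K$ alone: for $\ell(K)>0$ it takes the parent $\hat K$ and the specific step $\G_{j+1}=\subdiv(\G_j,\{\hat K\})$ at which $K$ was created, where admissibility gives $\min\ell(\patch{\G_j}{\hat K})\ge\ell(\hat K)=\ell(K)-1$; it then observes that levels never decrease under refinement, so $\min\ell(\patch{\G}{\hat K})\ge\min\ell(\patch{\G_j}{\hat K})$, and finishes with the patch nesting $\patch\G K\subseteq\patch\G{\hat K}$. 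The two arguments use exactly the same ingredients (admissibility of the creating subdivision, monotonicity of levels, nested patches), but the paper's version collapses your entire inductive step and case split into a single application of ``the minimum level over a fixed region can only grow,'' which is why it fits in four lines; your version is more mechanical and self-contained but longer. One small point worth tightening if you write yours out: in your second case you invoke $\U(K)\subseteq\U(\hat K)$ for $K\in\child(\hat K)$, which is precisely the content of the proof of Lemma~\ref{lma: magic patches are nested} (it needs $\D(\ell(K))+\Dist(K,\hat K)\le\D(\ell(\hat K))$, not just monotonicity of $\D$), so you should cite that lemma rather than re-derive the inclusion informally.
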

The proof is given in Appendix~\ref{apx: levels change slowly}.

\begin{proof}[Proof of Theorem~\ref{thm: ref works}]
Given the mesh $\G\in\A$ and marked elements $\M\subseteq\G$ to be refined, we have to show that there is a sequence of meshes that are subsequent admissible refinements, with $\G$ being the first and $\refine(\G,\M)$ the last mesh in that sequence.

Set $\Mtilde\sei\clos(\G,\M)$ and
\begin{alignat}{2}
\notag\overline L&\sei\max\ell(\Mtilde),\quad\underline L\sei\min\ell(\Mtilde)&&\\
\notag\M_j&\sei\bigl\{K\in\Mtilde\mid\ell(K)=j\bigr\}&&\text{for}\enspace j=\underline L,\dots,\overline L\\
\label{eq: ref works 1}\G_{\underline L}&\sei\G,\quad \G_{j+1}\sei\subdiv(\G_j,\M_j)&\enspace&\text{for}\enspace j=\underline L,\dots,\overline L.
\end{alignat}
It follows that $\refine(\G,\M)=\G_{\overline L+1}$. We will show by induction over $j$ that all subdivisions in  \eqref{eq: ref works 1} are admissible.

For the first step $j=\underline L$, we know $\{K'\in\Mtilde\mid\ell(K')<\underline L\}=\emptyset$, and by construction of $\Mtilde$ that for each $K\in\Mtilde_{\underline L}$ holds  
$\{K'\in\patch\G K\mid\ell(K')<\ell(K)\}\subseteq\Mtilde$.
Together with $\ell(K)=\underline L$, it follows for any $K\in\Mtilde_{\underline L}$ that  there is no $K'\in\patch\G K$ with $\ell(K')<\ell(K)$. This is, the subdivisions of all $K\in\Mtilde_{\underline L}$ are admissible independently of their order and hence $\subdiv(\G_{\underline L},\Mtilde_{\underline L})$ is admissible.

Consider an arbitrary step $j\in\{\underline L,\dots,\overline L\}$ and assume that $\G_{\underline L},\dots,\G_j$ are admissible meshes. 
Assume for contradiction that there is $K\in\M_j$ of which the subdivision is not admissible, i.e., there exists $K'\in\smash{\patch{\G_j}K}$ with $\ell(K')<\ell(K)$ and consequently $K'\notin\Mtilde$, because $K'$ has not been refined yet. It follows from the closure Algorithm~\ref{alg: closure} that $K'\notin\G$. Hence,  there is $\hat K\in\G$ such that $K'\subset\hat K$. 
We have $\ell(\hat K)<\ell(K')<\ell(K)$, which implies $\ell(\hat K)<\ell(K)-1$. Note that $K\in\G$ because $\M_j\subseteq\Mtilde\subseteq\G$.
From $K'\in\patch{\G_j}K$, it follows by definition that $K'\cap\U(K)\neq\emptyset$, and $K'\subset\hat K$ yields 
$\hat K\cap\U(K)\neq\emptyset$ and hence $\hat K\in\patch\G K$.
Together with $\ell(\hat K)<\ell(K)-1$, Lemma~\ref{lma: levels change slowly} implies that $\G$ is not admissible, which contradicts the assumption.
\end{proof}

\section{T-spline definition}\label{sec: Tspline def}
In this section, we define trivariate T-spline functions corresponding to a given admissible mesh. We roughly follow the definitions from \cite{Morgenstern:Peterseim:2015}.
\begin{df}[Active nodes]
For each element $K=[x,x+\tilde x]\times[y,y+\tilde y]\times[z,z+\tilde z]$, the corresponding set of vertices is denoted by \[\N(K)\sei\{x,x+\tilde x\}\times\{y,y+\tilde y\}\times\{z,z+\tilde z\}.\] We refer to the elements of $\N\sei\bigcup_{K\in\G}\N(K)$ as \emph{nodes}.
We define the \emph{active region} \[\AR\sei\Bigl[\ceilfrac{p_1}2,\tilde X-\ceilfrac{p_1}2\Bigr]\times\Bigl[\ceilfrac{p_2}2,\tilde Y-\ceilfrac{p_2}2\Bigr]\times\Bigl[\ceilfrac{p_3}2,\tilde Z-\ceilfrac{p_3}2\Bigr]\]
and the set of \emph{active nodes} $\N_A\sei\N\cap\AR$. 
\end{df}
\begin{df}[Skeleton]
Given a mesh $\G$, denote the union of all closed $x$-orthogonal element faces by $\xix\sei\tcup_{K\in\G}\xix(K)$, with 
\begin{align*}
\xix(K) &\sei \{x,x+\tilde x\}\times[y,y+\tilde y]\times [z,z+\tilde z]\\
\text{for any } K &= [x,x+\tilde x]\times[y,y+\tilde y]\times [z,z+\tilde z]\in\G.
\end{align*}
We call $\xix$ the \emph{$x$-orthogonal skeleton}. Analogously, we denote the $y$-orthogonal skeleton by $\xiy$, and the $z$-orthogonal skeleton by $\xiz$.
\end{df}

\begin{figure}[ht]
\centering
\includegraphics[width=.275\textwidth]{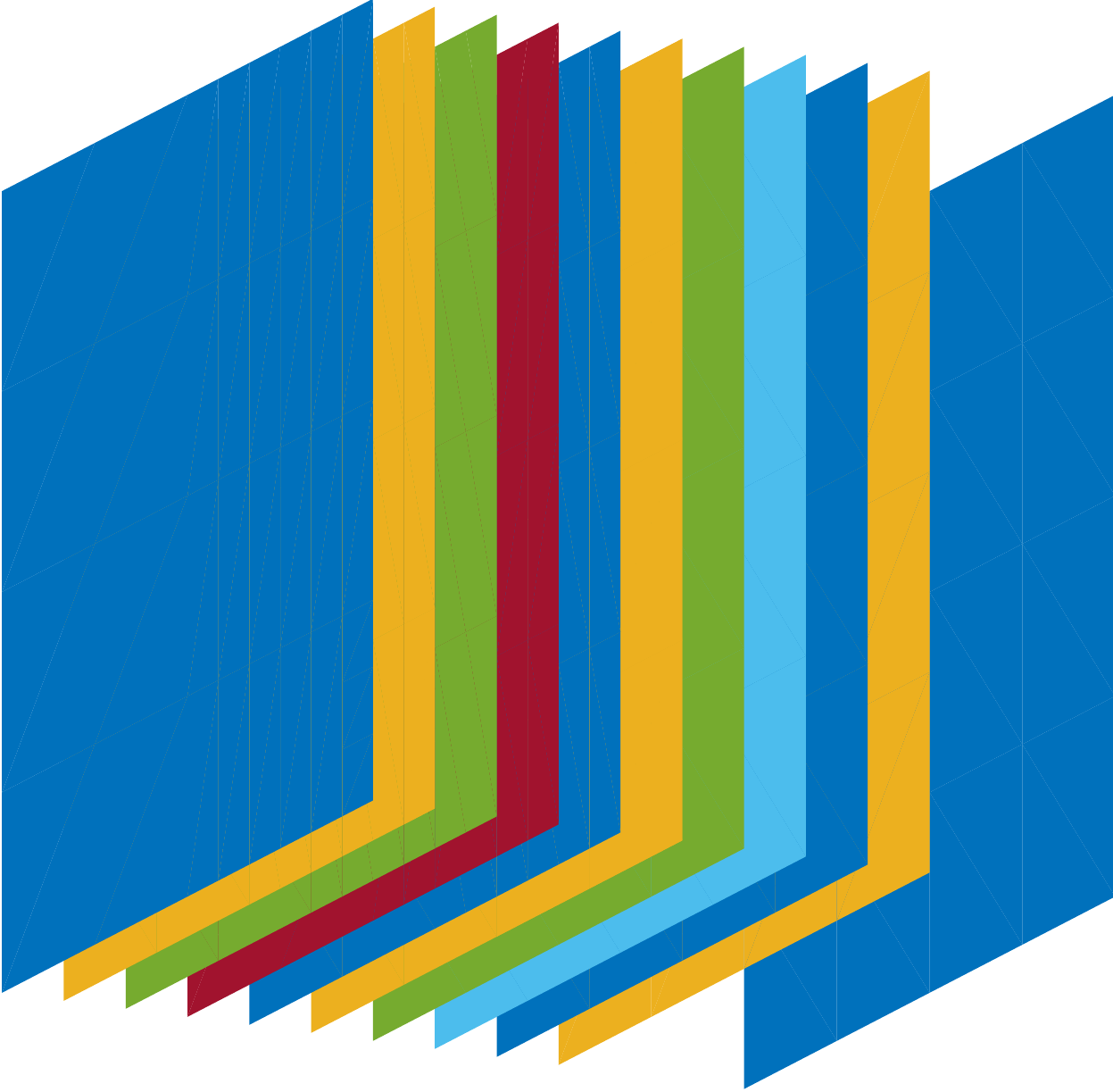}\hspace{.08\textwidth}
\includegraphics[width=.275\textwidth]{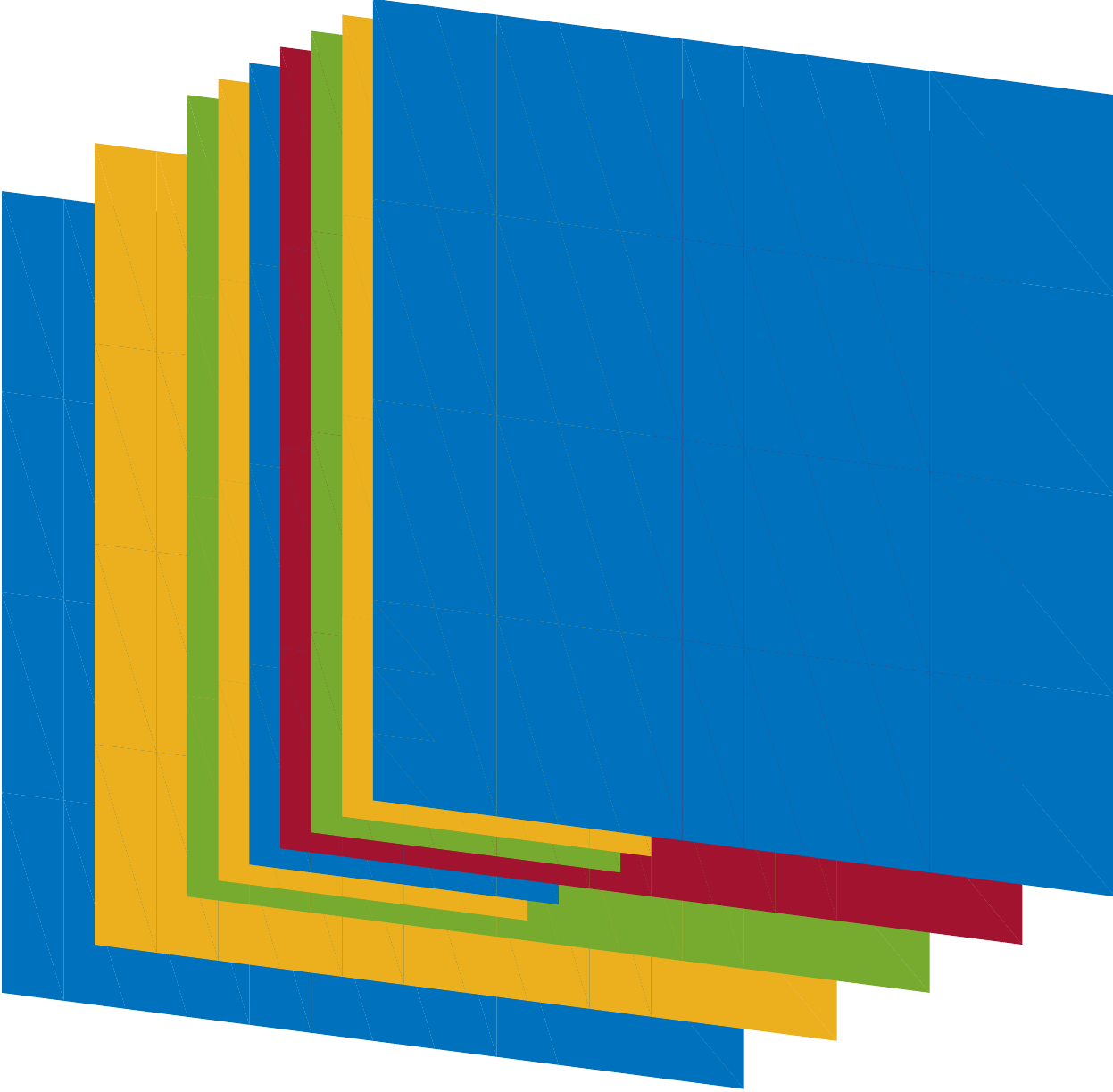}\hspace{.08\textwidth}
\includegraphics[width=.275\textwidth]{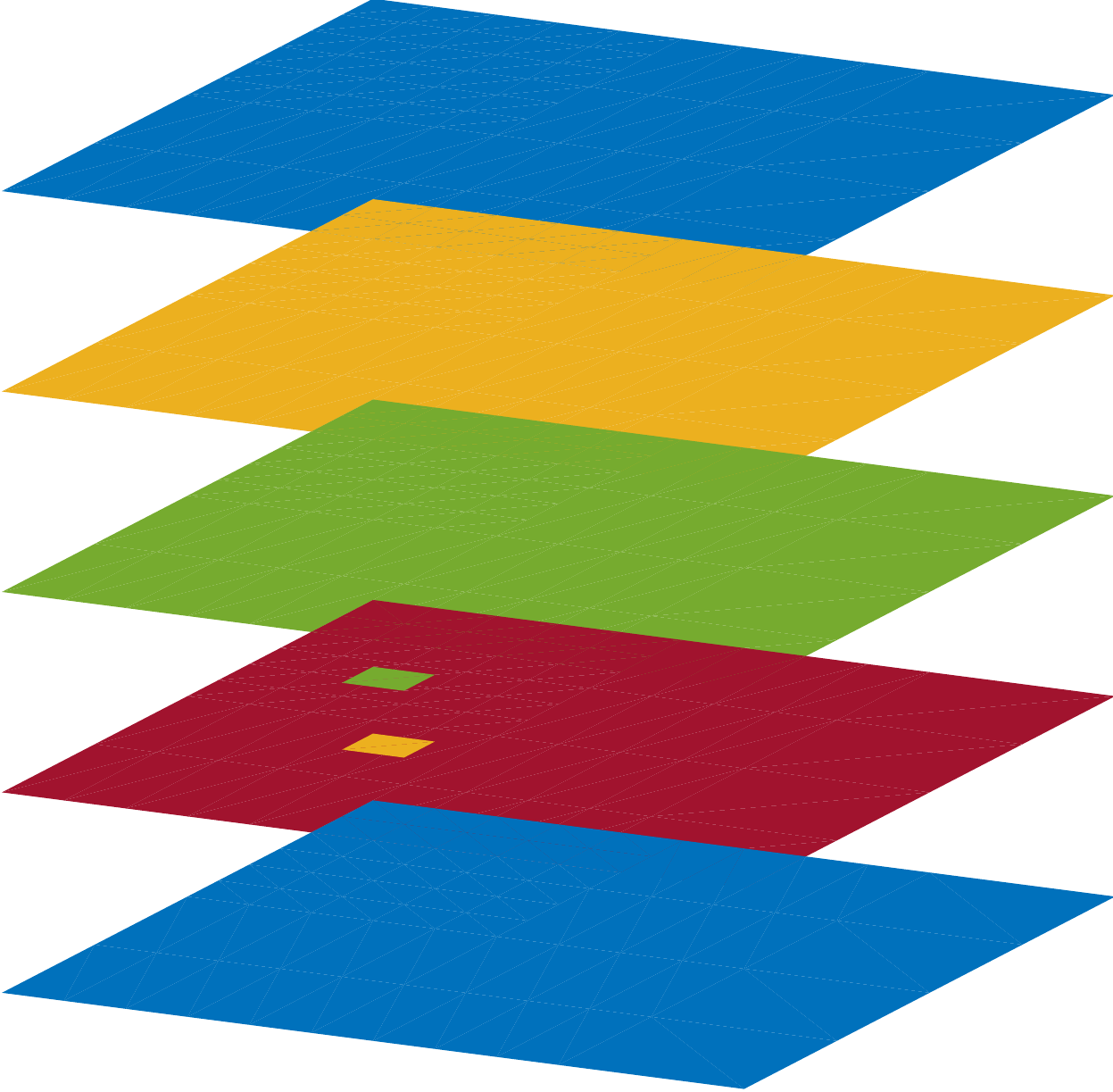}
\caption{$x$-orthogonal, $y$-orthogonal and $z$-orthogonal skeleton of the final mesh from Figure~\ref{fig: refinement algorithm}.}
\label{fig: Xi_xyz}
\end{figure}

\begin{df}[Global index sets]
For any $x,y,z\in\reell$, we define
\begin{alignat*}{2}
\XX(y,z) &\sei \bigl\{\tilde x\in[0,&\tilde X]&\mid(\tilde x,y,z)\in\xix\bigr\},\\
\YY(x,z) &\sei \bigl\{\tilde y\in[0,&\tilde Y]&\mid(x,\tilde y,z)\in\xiy\bigr\},\\
\ZZ(x,y) &\sei \bigl\{\tilde z\in[0,&\tilde Z]&\mid(x,y,\tilde z)\in\xiz\bigr\}.
\end{alignat*}
Note that in an admissible mesh, the entries $\bigl\{ 0,\dots,\ceilfrac {p_1}2-1,\enspace \tilde X-\ceilfrac {p_1}2+1,\dots,\tilde X\bigr\}$ are always included in $\XX(y,z)$ (and analogously for $\YY(x,z)$ and $\ZZ(x,y)$).
\end{df}

\begin{df}[Local index vectors]
To each active node $v=(v_1,v_2,v_3)\in\N_A$, we associate  a local index vector $\xx(v)\in\reell^{p_1+2}$, which is obtained by taking the unique $p_1+2$ consecutive elements in $\XX(v_2,v_3)$ having $v_1$ as their $\tfrac{p_1+3}2$-th (this is, the middle) entry. We analogously define $\yy(v)\in\reell^{p_2+2}$ and $\zz(v)\in\reell^{p_3+2}$.
\end{df}
\begin{df}[T-spline blending function]
 We associate to each active node $v\in\N_A$ a trivariate B-spline function, referred as \emph{T-spline blending function}, defined as the product of the B-spline functions on the corresponding local index vectors, \[B_v(x,y,z)\coloneqq N_{\xx(v)}(x) \cdot N_{\yy(v)}(y)\cdot N_{\zz(v)}(z).\]
\end{df}

\section{Analysis-Suitability}\label{sec: AS}
In this section, we give an abstract definition of Analysis-Suitability. Instead of using T-junction extensions as in the 2D case, we define \emph{perturbed regions} through the intersection of particular T-spline supports. Analysis-Suitability is then defined as the absence of intersections between these perturbed regions. This idea is comparable to the 2D case, where Analysis-Suitability is defined as the absence of intersections between T-junction extensions.
Subsequent to these definitions, we prove that all previously defined admissible meshes are analysis-suitable.

\begin{df}[Perturbed regions]\label{df: perturbed regions}%
For $q,r,s\in\reell$ define the slices
\begin{align*}
\Sx(q) &\sei\left\{(\tilde x,\tilde y,\tilde z)\in\AR\mid \tilde x=q\right\},\\
\Sy(r) &\sei\left\{(\tilde x,\tilde y,\tilde z)\in\AR\mid \tilde y=r\right\},\\
\Sz(s) &\sei\left\{(\tilde x,\tilde y,\tilde z)\in\AR\mid \tilde z=s\right\}.
\intertext{Moreover, we denote by}
\Nx(q) &\sei\left\{(v_1,v_2,v_3)\in\N_A\mid(q,v_2,v_3)\in\xix\right\}
\intertext{the set of all nodes of which the projection on the slice $\Sx(q)$ lies in some element's face.
Define analogously}
\Ny(r) &\sei\left\{(v_1,v_2,v_3)\in\N_A\mid(v_1,r,v_3)\in\xiy\right\},\\
\Nz(s) &\sei\left\{(v_1,v_2,v_3)\in\N_A\mid(v_1,v_2,s)\in\xiz\right\}.\\
\intertext{For any $q,r,s\in\reell$ we define \emph{slice perturbations}}
\Rx(q) &\sei \Sx(q)\cap\mbigcup{v\in\Nx(q)}\supp B_v\ \cap \mbigcup{v\in\N_A\setminus\Nx(q)}\supp B_v,\\
\Ry(r) &\sei \Sy(r)\cap\mbigcup{v\in\Ny(r)}\supp B_v\ \cap \mbigcup{v\in\N_A\setminus\Ny(r)}\supp B_v,\\
\Rz(s) &\sei \Sz(s)\cap\mbigcup{v\in\Nz(s)}\supp B_v\ \cap \mbigcup{v\in\N_A\setminus\Nz(s)}\supp B_v.
\end{align*}
The \emph{perturbed regions} $\Rx$, $\Ry$, $\Rz$ are defined by 
\[\Rx\sei\bigcup_{q\in\reell}\Rx(q),\quad\Ry\sei\bigcup_{r\in\reell}\Ry(r),\quad\Rz\sei\bigcup_{s\in\reell}\Rz(s).\]
In a uniform mesh, the perturbed regions are empty. In a non-uniform mesh, the perturbed regions are a superset of all hanging nodes and edges (this is, all kinds of 3D T-junctions). See Figure~\ref{fig: AS example 1} for a 2D visualization of these definitions.
\end{df}

\begin{df}[Analysis-suitability]\label{df: AS}%
A given mesh $\G$ is \emph{analysis-suitable} if the above-defined perturbed regions do not intersect, i.e. if
\[\Rx\cap\Ry =\Ry \cap \Rz=\Rz\cap \Rx= \emptyset.\]
The set of analysis-suitable meshes is denoted by $\AS$.
\end{df}
\begin{rem}
When applied in the two-dimensional case, the above definitions may yield perturbed regions that are larger than the T-junction extensions from \cite{ZSHS:2012, BBCS:2012} (see Fig.~\ref{fig: AS example 2}). However, this occurs only in meshes that are not analysis-suitable, and 
the 2D version of Definition~\ref{df: AS} is, regarding refinements of tensor-product meshes, equivalent to the classical definition of analysis-suitability%
.
\end{rem}

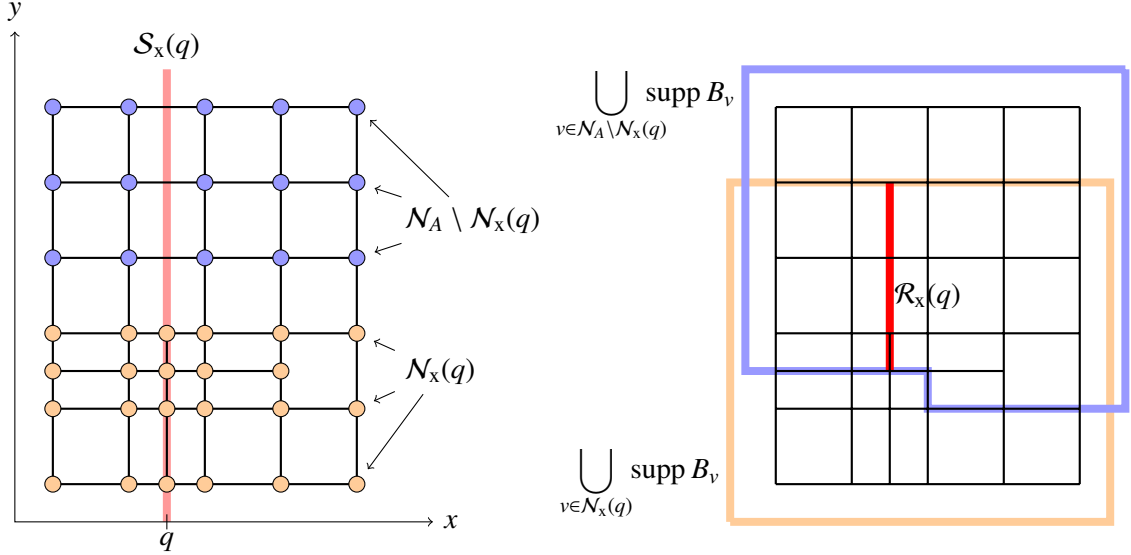
\begin{figure}[ht]
\centering
\begin{tikzpicture}[baseline=0]
\draw[line width=3pt, red!40] (1.5,-.5) coordinate (q) -- (1.5,5.5) coordinate (sx);
\node[above] at (sx) {$\Sx(q)$};
\node[below] at (q) {$q$};
\draw[<->] (-.5,6) node [above] {$y$} |- (5,-.5) node [right] {$x$};
\draw (q)++(0,-.1)--++(0,.2);
\draw[thick] (0,0) grid (4,5)
      (0,1.5)--(3,1.5) (1.5,0)--(1.5,2);
\foreach \a in {0,...,4}
{ \foreach \b in {0,1,2}
    \draw[fill=orange!40] (\a,\b) circle (3pt);
  \foreach \b in {3,4,5}
    \draw[fill=blue!40] (\a,\b) circle (3pt);
}
\foreach \b in {0,1,2}
  \draw[fill=orange!40] (1.5,\b) circle (3pt);
\foreach \b in {0,1,1.5,2,3}
  \draw[fill=orange!40] (\b,1.5) circle (3pt);
\node[right] (na) at (4.5,3.5) {$\N_A\setminus\Nx(q)$};
\node[right] (nx) at (4.5,1.5) {$\Nx(q)$};
\foreach \a in {3,4,5}
  \draw[->,shorten >= 7pt] (na) -- (4,\a);
\foreach \a in {0,1,2}
  \draw[->,shorten >= 7pt] (nx) -- (4,\a);
\end{tikzpicture}
\begin{tikzpicture}[baseline=0]
\draw[line width=3pt, orange!40] (-.6,-.5) |- (4.4,4) |- (-.6,-.5);
\draw[line width=3pt, blue!40] (4.6,5.5) |- (2,1) |- (-.4,1.5) |- (4.6,5.5);
\draw[line width=3pt,red] (1.5,1.5)--(1.5,4) ++(.5,-1.5) coordinate (rx);
\node at (rx) {$\Rx(q)$};
\draw[thick] (0,0) grid (4,5)
      (0,1.5)--(3,1.5) (1.5,0)--(1.5,2);
\node[left] at (-.6,0) {$\mbigcup{v\in\Nx(q)}\supp B_v$};
\node[left] at (-.4,5) {$\mbigcup{v\in\N_A\setminus\Nx(q)}\supp B_v$};
\end{tikzpicture}

\caption{2D example for the construction of the slice perturbation $\Rx(q)$ in an analysis-suitable mesh. The left figure illustrates the construction of $\Nx(q)$ and its complement $\N_A\setminus\Nx(q)$, and the right figure shows the resulting slice perturbation, which coincides with the corresponding classical T-junction extension. }
\label{fig: AS example 1}
\end{figure}
\begin{figure}[ht]
\centering
\begin{tikzpicture}[baseline=0]
\draw[line width=3pt, red!40] (1.5,-.5) coordinate (q) -- (1.5,5.5) coordinate (sx);
\node[above] at (sx) {$\Sx(q)$};
\node[below] at (q) {$q$};
\draw[<->] (-.5,6) node [above] {$y$} |- (5,-.5) node [right] {$x$};
\draw (q)++(0,-.1)--++(0,.2);
\draw[thick] (0,0) grid (4,5)
      (0,1.5)--(2,1.5) (1.5,0)--(1.5,2);
\foreach \a in {0,...,4}
{ \foreach \b in {0,1,2}
    \draw[fill=orange!40] (\a,\b) circle (3pt);
  \foreach \b in {3,4,5}
    \draw[fill=blue!40] (\a,\b) circle (3pt);
}
\foreach \b in {0,1,2}
  \draw[fill=orange!40] (1.5,\b) circle (3pt);
\foreach \b in {0,1,1.5,2}
  \draw[fill=orange!40] (\b,1.5) circle (3pt);
\node[right] (na) at (4.5,3.5) {$\N_A\setminus\Nx(q)$};
\node[right] (nx) at (4.5,1.5) {$\Nx(q)$};
\foreach \a in {3,4,5}
  \draw[->,shorten >= 7pt] (na) -- (4,\a);
\foreach \a in {0,1,2}
  \draw[->,shorten >= 7pt] (nx) -- (4,\a);
\end{tikzpicture}
\begin{tikzpicture}[baseline=0]
\draw[line width=3pt, orange!40] (-.6,-.5) |- (4.4,4) |- (-.6,-.5);
\draw[line width=3pt, blue!40] (4.6,5.5) |- (1,1) |- (-.4,1.5) |- (4.6,5.5);
\draw[line width=3pt,red] (1.5,1)--(1.5,4) ++(.5,-1.5) coordinate (rx);
\node at (rx) {$\Rx(q)$};
\draw[thick] (0,0) grid (4,5)
      (0,1.5)--(2,1.5) (1.5,0)--(1.5,2);
\node[left] at (-.6,0) {$\mbigcup{v\in\Nx(q)}\supp B_v$};
\node[left] at (-.4,5) {$\mbigcup{v\in\N_A\setminus\Nx(q)}\supp B_v$};
\end{tikzpicture}
\caption{2D example for the construction of the slice perturbation $\Rx(q)$ in a mesh that is not analysis-suitable. The left figure illustrates the construction of $\Nx(q)$ and its complement $\N_A\setminus\Nx(q)$, and the right figure shows the resulting slice perturbation, which is strictly larger than the corresponding classical T-junction extension. }
\label{fig: AS example 2}
\end{figure}
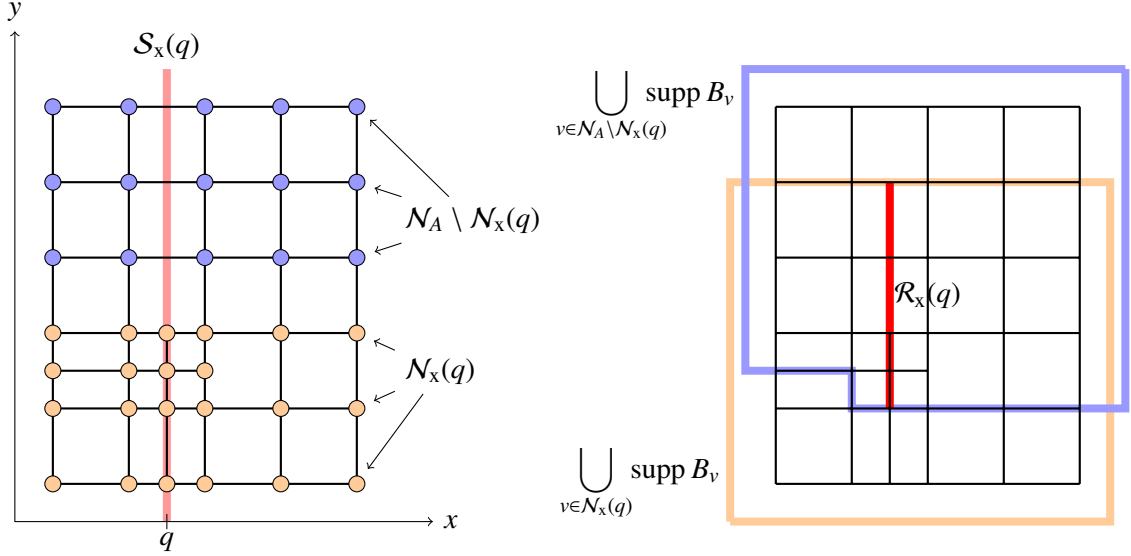

\begin{thm}\label{thm: A in AS}%
$\A\subseteq\AS$ for all $m\ge 2$.
\end{thm}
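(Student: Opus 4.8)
The plan is to argue by contradiction, the substantive part being a geometric localisation of the perturbed regions, after which the admissibility bookkeeping (Lemmas~\ref{lma: magic patches are nested} and~\ref{lma: levels change slowly}) finishes the job. By the cyclic structure of the whole construction — Definitions~\ref{df: subdivision}, \ref{df: magic patch} and~\ref{df: perturbed regions} treat $x$, $y$, $z$ in a $3$-periodic fashion, with the refinement phases $\equiv0,1,2\pmod3$ playing the roles of $x,y,z$ — it suffices to prove $\Rx\cap\Ry=\emptyset$ for an arbitrary admissible mesh $\G$; the other two equalities follow by the same argument after the relabelling $(x,y,z)\mapsto(y,z,x)$, which simultaneously shifts the phase bookkeeping. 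So assume, for contradiction, that $P=(a,b,c)\in\Rx(q)\cap\Ry(r)$. Unravelling Definition~\ref{df: perturbed regions}: $P\in\AR$, $a=q$, $b=r$, and there are active nodes $v\in\Nx(q)$, $w\in\N_A\setminus\Nx(q)$, $v'\in\Ny(r)$, $w'\in\N_A\setminus\Ny(r)$ with $P\in\supp B_v\cap\supp B_w\cap\supp B_{v'}\cap\supp B_{w'}$. Hence the plane $x=q$ carries an $x$-orthogonal face at height $(v_2,v_3)$ but none at height $(w_2,w_3)$, although $q$ still lies strictly between two consecutive entries of $\xx(w)$, so $P$ sits across a \emph{hanging $x$-orthogonal face}; symmetrically it sits across a hanging $y$-orthogonal face.

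The crux is the localisation lemma: \emph{if $P\in\Rx(q)$, then $q$ is one of the internal faces created by the $x$-orthogonal subdivision of some element $C\in\tcup\A$, one of whose children $K$ (so $\ell(K)=\ell(C)+1\equiv1\pmod3$) satisfies $P\in\U(K)$; moreover the coarser cell $K_\circ$ lying across the hanging face at the height of $w$ satisfies $P\in\overline{\U(K_\circ)}$ and $\ell(K_\circ)\le\ell(C)$.} Proving this is the main geometric work. The $x$-support of $B_v$, resp.\ $B_w$, spans only $(p_1+1)/2$ knot steps on either side of $v_1$, resp.\ $w_1$, and by local quasi-uniformity (Lemma~\ref{lma: levels change slowly}) the knot steps that can reach $q$ have length at most the side length of the incident element; carrying out the analogous estimates in the $y$- and $z$-directions, and using the margin that $\AR$ reserves towards $\partial\Omega$, one compares the reach of the four supports against the half-widths of $\D(\ell(K))$, resp.\ $\D(\ell(K_\circ))$. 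The constants of Definition~\ref{df: magic patch} — the additive $\tfrac32$'s and the componentwise divisions by $m$ in the already-subdivided directions — are tuned precisely so that these inequalities close with room to spare, uniformly in $m\ge2$ and in the odd degrees $p_i\ge3$. The symmetric statement applied to $P\in\Ry(r)$ produces a $y$-subdivided element $C'$, one of whose children $K'$ (so $\ell(K')=\ell(C')+1\equiv2\pmod3$) satisfies $P\in\U(K')$, together with a coarser cell $K'_\circ$ with $P\in\overline{\U(K'_\circ)}$ and $\ell(K'_\circ)\le\ell(C')$.

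It remains to contradict the coexistence near $P$ of an $x$-subdivision and a $y$-subdivision in the admissible history of $\G$. Since $P$ lies in all four of $\U(K),\U(K'),\overline{\U(K_\circ)},\overline{\U(K'_\circ)}$, the midpoints of these elements are componentwise close; combining this with Lemma~\ref{lma: magic patches are nested} (to replace an element by a coarser relative of the same family, whose patch is larger) and with Lemma~\ref{lma: levels change slowly} pins the levels down up to $\pm1$, and the residues $1$ and $2$ modulo $3$ then force the $y$-subdivision of $C'$ to lie exactly one phase later than the $x$-subdivision of $C$, so $\ell(C')=\ell(C)+1$ and in particular $\ell(K_\circ)\le\ell(C)<\ell(C')$. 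Now the $y$-subdivision of $C'$ is an admissible step, so at the moment it is performed every element of the current mesh meeting the environment $\U(C')$ has level $\ge\ell(C')$; since $\U$ does not depend on the mesh, the ancestor at that time of the coarse cell $K_\circ$ meets $\U(C')$ and has level $\le\ell(K_\circ)<\ell(C')$ — a contradiction — provided one can verify $K_\circ\cap\U(C')\ne\emptyset$ from the distance estimates. If instead the relative magnitudes of $m$ and $p_1$ push $K_\circ$ out of $\U(C')$, one reverts to the admissibility of the $x$-subdivision of $C$: then a suitable ancestor of $C'$ (of level $<\ell(C)$, since $C'$ was created after $C$'s subdivision) meets $\U(C)$, which again contradicts admissibility. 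Either branch yields the contradiction, hence $\Rx\cap\Ry=\emptyset$, and by symmetry $\A\subseteq\AS$.

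The step I expect to be the main obstacle is the localisation lemma together with the two containment claims ``$K_\circ\cap\U(C')\ne\emptyset$'' / ``an ancestor of $C'$ meets $\U(C)$'' used at the end: all of these reduce to elementary but tight inequalities among $m$, the $p_i$, powers of $m^{-1}$, and the constants $\tfrac32$, which must be checked uniformly over $m\ge2$ and odd $p_i\ge3$ — and the entire design of $\D(k)$ and of the $\AR$-padding exists to make them hold with just enough slack. Once those inequalities are established, the contradiction with Definition~\ref{df: adm. subdivision} is immediate, and cyclic symmetry delivers all three disjointness statements.
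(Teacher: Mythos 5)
Your overall strategy is genuinely different from the paper's: you argue directly on a fixed admissible mesh by localising a hypothetical point $P\in\Rx\cap\Ry$ to hanging faces and then contradicting admissibility of the historical subdivisions that created them, whereas the paper proceeds by induction over admissible subdivisions, showing that a single subdivision of $K$ only enlarges $\Rx$ by an explicit set $\Sigma$ supported near $K$ and then verifying $\Sigma\cap\hat\Ry=\Sigma\cap\hat\Rz=\emptyset$ using the fact that, \emph{at the moment of subdivision}, every element of $\patch\G K$ has level $\ge\ell(K)$. That inductive setup is what makes the geometry tractable: one compares the new perturbation against a skeleton that is locally that of a uniform mesh. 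Your direct approach must instead recover, from the final mesh alone, which subdivision created each hanging face and relate the admissibility condition that held at that past time to the present configuration.

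The gap is that the two steps carrying all the mathematical content are asserted rather than proved, and one of them does not follow from the lemmas you cite. First, the ``localisation lemma'' (that $P\in\Rx(q)$ forces $P\in\U(K)$ for a child $K$ of the $x$-subdivided element $C$, and $P\in\overline{\U(K_\circ)}$ for the coarse cell across the hanging face) is exactly the tight interplay between the support radii $\tfrac{p_i+1}2$ knot steps, local quasi-uniformity, and the constants in $\D(k)$; you defer it entirely, and it is not a routine verification. Second, the claim that the residues modulo $3$ ``force $\ell(C')=\ell(C)+1$'' is unjustified: Lemma~\ref{lma: levels change slowly} constrains levels of elements of the \emph{current} mesh lying in each other's patches, and from $P\in\U(K)\cap\U(K')$ you only get that final-mesh elements containing $P$ have level $\ge\ell(K)-1$ and $\ge\ell(K')-1$, i.e.\ lower bounds that do not exclude $\ell(C')=\ell(C)+4$ or $\ell(C)-2$. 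Without pinning the levels, the final dichotomy (``$K_\circ$ meets $\U(C')$'' versus ``an ancestor of $C'$ meets $\U(C)$'') has no basis, and both of its branches are themselves left as unverified containment inequalities. As it stands the proposal is a plan whose critical estimates are all open; I would recommend either carrying out those inequalities in full or switching to the paper's inductive decomposition, where admissibility is available in exactly the form needed.
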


\begin{proof}
We prove the claim by induction over admissible subdivisions.
Assume $K\in\G\in\A\cap\AS$ and let $\hat\G\sei\subdiv(\G,K)\in\A$ be an admissible subdivision of $\G$. 
We have to show that $\hat\G\in\AS$. We assume without loss of generality that $\ell(K)=0\bmod 3$. Hence subdividing $K$ adds $m-1$  faces to the mesh, which are $x$-orthogonal. Set $K\eqqcolon[x,x+\tilde x]\times[y,y+\tilde y]\times[z,z+\tilde z]$ and 
$\tXi\sei \{x+\tfrac jm\tilde x\mid j\in\{1,\dots, m-1\}\}$, then the skeletons of $\hat\G$ are given by
\begin{equation*}
\hatxix=\xix\cup\tXi\times[y,y+\tilde y]\times[z,z+\tilde z] ,\quad
\hatxiy=\xiy,\quad\hatxiz=\xiz.
\end{equation*}
Let $\hat v\in\hat\N_A\setminus\N_A$ be a new active node.
Using the local quasi-uniformity from Lemma~\ref{lma: levels change slowly}, it can be verified that for all $r\in\reell$ such that $\hat v\in\hat\Ny(r)$ follows $\Ry(r)\cap\supp B_{\hat v}=\emptyset$. Consequently, $\hat\Ry=\Ry$ and analogously $\hat \Rz=\Rz$. Moreover, $\hat\Rx(q)=\Rx(q)$ for all $q\notin\tXi$. It remains to characterize 
\[\hat\Rx(\xi) = \Sx(\xi)\cap\mbigcup{v\in\hat\Nx(\xi)}\supp B_v\cap\mbigcup{v\in\hat\N_A\setminus\hat\Nx(\xi)}\supp B_v\]
for any $\xi\in\tXi$.
With 
\begin{equation}\label{eq: A in AS: active nodes}
\begin{aligned}
\hat\Nx(\xi)&=\Nx(\xi)\cup\hat\N_A\setminus\N_A\\\text{and}\enspace\hat\N_A\setminus\hat\Nx(\xi)&=\N_A\setminus\hat\Nx(\xi)=\N_A\setminus\Nx(\xi),
\end{aligned}
\end{equation}
it follows
\begin{align*}
\hat\Rx(\xi) &=
\Sx(\xi)\cap\mbigcup{v\in\hat\Nx(\xi)}\supp B_v\ \cap \mbigcup{v\in\hat\N_A\setminus\hat\Nx(\xi)}\supp B_v,\\
&\Stackrel{\eqref{eq: A in AS: active nodes}}= \Sx(\xi)\cap\Bigl(\mbigcup{v\in\Nx(\xi)}\supp B_v\cup\mbigcup{v\in\hat\N_A\setminus\N_A}\supp B_v\Bigr)\cap\mbigcup{v\in\N_A\setminus\Nx(\xi)}\supp B_v\\
&= \Rx (\xi) \cup \Bigl(\underbrace{\Sx(\xi)\ \cap\mbigcup{\hat v\in\hat\N_A\setminus\N_A}\supp B_{\hat v}}_\Sigma\ \cap\mbigcup{v\in\N_A\setminus\Nx(\xi)}\supp B_v\Bigr).
\end{align*}
We will prove below that $\Sigma\cap\hat\Rz=\Sigma\cap\hat\Ry=\emptyset$. See Figures \ref{fig: Sx-levels} and \ref{fig: Sx-perturbations} for an example with $\ell(K)=3$ and $m=2$.
Assume for contradiction that there is $s\in\reell$ with $\hat\Rz(s)\cap\Sigma\neq\emptyset$. Then there exist $v\in\hat\Nz(s)$ and $w\in\hat\N_A\setminus\hat\Nz(s)$ such that 
\begin{equation}\label{eq: A in AS: eqn to contradict}
\Sz(s)\cap \supp B_v\cap\supp B_w\cap\Sigma\neq\emptyset.
\end{equation}
Since the subdivision of $K$ is admissible, we know that all elements in $\patch\G K$ are at least of level $\ell(K)$. This implies that all those elements are of equal or smaller size than $K$. 
Denote $\midp(K)\eqqcolon(\sigma,\nu,\tau)$ and $\varepsilon\sei\tfrac{m^{-\ell(K)/3}}2$.
It follows
\begin{equation}\label{eq: A in AS: Sigma in patch}
\Sigma\subseteq\tcup\patch\G K,
\end{equation}
and with \[\hat\N_A\setminus\N_A\ \subset\ [\sigma-\varepsilon,\sigma+\varepsilon]\times[\nu-\varepsilon,\nu+\varepsilon]\times[\tau-\varepsilon,\tau+\varepsilon],\]
we get more precisely
\begin{equation}\label{eq: A in AS: def Sigma}
\Sigma\subseteq\left\{\xi\right\}\times\bigl[\nu-\varepsilon(p_2+2),\nu+\varepsilon(p_2+2)\bigr]\times\bigl[\tau-\varepsilon(p_3+2),\tau+\varepsilon(p_3+2)\bigr].
\end{equation}
The second-order patch $\patch\G{\patch\G K}\sei\bigcup_{K'\in\patch\G K}\patch\G{K'}$ consists of elements that may be larger in $z$-direction, but are of same or smaller size than $K$ in $x$- and $y$-direction. For $w=(w_1,w_2,w_3)$, Equation \eqref{eq: A in AS: eqn to contradict} implies $\supp B_w\cap\Sigma\neq\emptyset$,  and  we conclude from \eqref{eq: A in AS: def Sigma} that 
\begin{equation}\label{eq: A in AS: w is near Sigma}
(w_1,w_2)\ \in\ \bigl[\xi-\varepsilon(p_1+1),\xi+\varepsilon(p_1+1)\bigr]\times\bigl[\nu-\varepsilon(2p_2+3),\nu+\varepsilon(2p_2+3)\bigr]
\end{equation}
We assume that there is no element in $\G$ with level higher than $\ell(K)+1$. This is an eligible assumption, since every admissible mesh can be reproduced by a sequence of level-increasing admissible subdivisions; see \cite[Proposition 4.3]{Morgenstern:Peterseim:2015} for a detailed construction. This assumption implies that the $z$-orthogonal skeleton $\xiz$ is a subset of the $z$-orthogonal skeleton of a uniform $(\ell(K)+1)$-leveled mesh, 
\begin{equation}
\xiz(\G)\subseteq\xiz(\Guni{\ell(K)+1}), \label{eq: A in AS: xiz in xiz-uni}  
\end{equation}
and with $\min\ell(\patch\G K)=\ell(K)$, we have even equality on the patch $\patch\G K$,
\begin{equation}\label{eq: A in AS: xiz-patch is xiz-uni-patch}
\xiz\bigl(\patch\G K\bigr)=\xiz\bigl(\patch{\Guni{\ell(K)}}K\bigr)=\xiz\bigl(\patch{\Guni{\ell(K)+1}}K\bigr), 
\end{equation}
using the notation $\xiz\bigl(\patch\G K\bigr)\sei\xiz(\G)\cap\tcup\patch\G K$. 
Since $v\in\hat\Nz(s)$, we know that \mbox{$\hat\Nz(s)\neq\emptyset$}, 
which means that there are elements in $\G$ that have $z$-orthogonal faces at the $z$-coordinate $s$, i.e., 
\mbox{$\Sz(s)\cap \xiz(\G) \neq\emptyset$}. With \eqref{eq: A in AS: xiz in xiz-uni} we get $\Sz(s)\cap \xiz(\Guni{\ell(K)+1}) \neq\emptyset$.
Since $\Guni{\ell(K)+1}$ is a tensor-product mesh, its $z$-orthogonal skeleton consists of global domain slices, which yields
$\Sz(s)\subseteq \xiz(\Guni{\ell(K)+1}).$
The restriction to the patch $\patch\G K$ yields
\begin{equation}\label{eq: A in AS: Szs-GpK in xiz-GpK}
\Sz(s)\cap\tcup\patch\G K\subseteq \xiz\bigl(\patch{\Guni{\ell(K)+1}}K\bigr) \stackrel{\eqref{eq: A in AS: xiz-patch is xiz-uni-patch}}=\xiz\bigl(\patch\G K\bigr)\subseteq\xiz(\G).
\end{equation}
Equation \eqref{eq: A in AS: eqn to contradict} implies that $\Sz(s)\cap\Sigma\neq\emptyset$, and
with \eqref{eq: A in AS: Sigma in patch} we get that $\Sz(s)\cap\tcup\patch\G K\neq\emptyset$. 
Hence
\begin{align}
\Sz(s)\cap\tcup\patch\G K\ &\supseteq\ \Sz(s)\cap\U(K) \notag \\
&=\ \bigl[\xi-\varepsilon(2p_1+3),\xi+\varepsilon(2p_1+3)\bigr]\times\bigl[\nu-\varepsilon(2p_2+3),\nu+\varepsilon(2p_2+3)\bigr]\times\{s\}.
\end{align}
Since $w\notin\hat\Nz(s)$, we know by definition that $(w_1,w_2,s)\notin\xiz$. Then it follows from \eqref{eq: A in AS: Szs-GpK in xiz-GpK} that $(w_1,w_2,s)\notin\Sz(s)\cap\tcup\patch\G K$,
and hence 
\begin{equation}
(w_1,w_2)\ \notin\ \bigl[\xi-\varepsilon(2p_1+3),\xi+\varepsilon(2p_1+3)\bigr]\times\bigl[\nu-\varepsilon(2p_2+3),\nu+\varepsilon(2p_2+3)\bigr] 
\end{equation}
in contradiction to \eqref{eq: A in AS: w is near Sigma}. This proves that $\hat\Rz\cap\Sigma=\emptyset$. Similar arguments prove that $\Sigma\cap\Ry=\emptyset$, which concludes the proof.
\end{proof}

\begin{figure}[!ht]
\centering
\begin{tikzpicture}[scale=.65]
\sf\scriptsize
\fill[blue!30] (6,9.5) rectangle (10.5,12);
\draw[ultra thick, blue!70] (5,7) rectangle (11.5,14);
\draw (0,0) grid (17,21);
\foreach \a in {5,...,11}
  \draw (\a+.5,7)--(\a+.5,14);
\foreach \a in {9.5,10.5,11.5}
  \draw (6,\a)--(10.5,\a);
\foreach \a in {0,...,20}
{ \foreach \b in {0,1,2,14,15,16}
    \node at (\b+.5,\a+.5) {0};
}
\foreach \a in {3,...,13}
{ \foreach \b in {0,1,2,18,19,20}
    \node at (\a+.5,\b+.5) {0};
}
\foreach \a in {3,...,17}
{ \foreach \b in {3,4,12,13}
    \node at (\b+.5,\a+.5) {1};
}
\foreach \a in {5,...,11}
{ \foreach \b in {3,4,5,6,14,15,16,17}
    \node at (\a+.5,\b+.5) {1};
}
\foreach \a in {7,...,13}
{ \foreach \b in {5,5.5,10.5,11,11.5}
    \node at (\b+.25,\a+.5) {2};
}
\foreach \a in {12,...,20}
{ \foreach \b in {7,8,12,13}
    \node at (\a/2+.25,\b+.5) {2};
}
\foreach \a in {12,...,20}
{ \foreach \b in {9,9.5,10,11,11.5}
    \node at (\a/2+.25,\b+.25) {3};
}
\foreach \a in {12,13,14,15,17,18,19,20}
  \node at (\a/2+.25,10.75) {3};
\node at (8.25,10.75) {4};
\end{tikzpicture}
\caption{$yz$-view on the slice $\Sx(\xi)$. The numbers denote element levels, and the element in the center with level 4 is a child of $K$. The patch $\patch{\hat\G}K$ is highlighted in blue, and the second-order patch $\patch{\hat\G}{\patch{\hat \G}K}$ is indicated by a thick blue line.}
\label{fig: Sx-levels}
\end{figure}

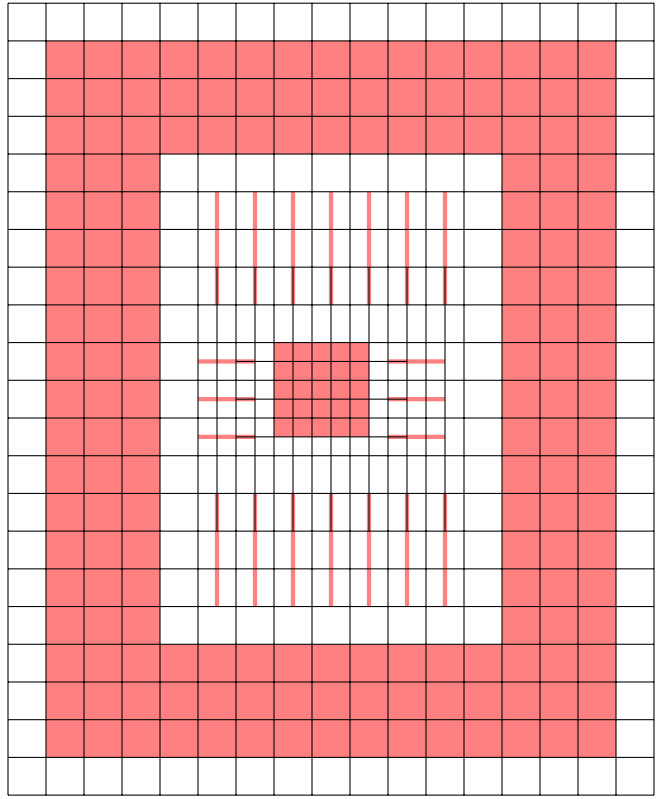
\begin{figure}[!ht]
\centering
\begin{tikzpicture}[scale=.5]
\fill[red!50] (7,9.5) rectangle (9.5,12)
(1,1) -- (16,1) -- (16,20) -- (1,20) -- (1,1) -- (4,4) -- (4,17) -- (13,17) -- (13,4) -- (4,4) -- (1,1);
\foreach \a in {5,...,11}
  \draw[ultra thick, red!50] (\a+.5,5)--(\a+.5,8)  (\a+.5,13)--(\a+.5,16);
\foreach \a in {9.5,10.5,11.5}
  \draw[ultra thick, red!50]  (5,\a)--(6.5,\a)  (10,\a)--(11.5,\a);
\draw (0,0) grid (17,21);
\foreach \a in {5,...,11}
  \draw (\a+.5,7)--(\a+.5,14);
\foreach \a in {9.5,10.5,11.5}
  \draw (6,\a)--(10.5,\a);
\end{tikzpicture}
\caption{$yz$-view on the slice $\Sx(\xi)$. $\Rx$ is indicated by red areas. $\Ry$ is depicted by horizontal red lines, $\Rz$ are vertical red lines. At the same time, the squared red area in the center coincides with $\Sigma$.}
\label{fig: Sx-perturbations}
\end{figure}
\section{Dual-Compatibility}\label{sec: DC}%
This section recalls the concept of Dual-Compatibility, which is a sufficient criterion for linear independence of the T-spline functions, based on dual functionals. We follow the ideas of \cite{BBSV:2014} for the definitions and for the proof of linear independence. In addition, we prove that all analysis-suitable (and hence all admissible) meshes are dual-compatible and thereby generalize a 2D result from \cite{BBCS:2012}.

\begin{prp}[Dual functional, {\cite[Theorem 4.41]{Schumaker:2007}}]\label{prp:overlap}%
Given the local index vector $X=(x_1,\dots,x_{p+2})$, there exists an $L^2$-functional $\lambda_{X}$ with $\supp\lambda_X=\supp N_X$ such that for any $\tilde X=(\tilde x_1,\dots,\tilde x_{p+2})$ satisfying
\begin{equation}\label{eq: overlap}
 \begin{alignedat}{4}
  \forall\ x&\in\{x_1,\dots,x_{p+2}\}&&:&\quad \tilde x_1&\leq x\leq \tilde x_{p+2}&&\Rightarrow x\in \{\tilde x_1,\dots,\tilde x_{p+2}\}\\
  \text{and}\quad\forall\ \tilde x&\in\{\tilde x_1,\dots,\tilde x_{p+2}\}&&:&\quad x_1&\leq \tilde x\leq x_{p+2}&&\Rightarrow \tilde x\in \{x_1,\dots,x_{p+2}\},
 \end{alignedat}
\end{equation}
follows $\lambda_{X}(N_{\tilde X}) = \delta_{X\tilde X}$.
\end{prp}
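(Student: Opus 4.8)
The plan is to derive this almost directly from the classical de Boor--Fix dual basis for B-splines (\cite[Theorem~4.41]{Schumaker:2007}); the only genuinely new point is that the overlap condition~\eqref{eq: overlap} is exactly what places $N_X$ and $N_{\tilde X}$ into one common B-spline space.

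First I would fix the functional, using only $X$. Let $V_X$ be the space of degree-$p$ splines on $\supp N_X=[x_1,x_{p+2}]$ with the (simple) interior knots $x_2,\dots,x_{p+1}$; then $N_X$ (restricted to $[x_1,x_{p+2}]$) is one of the B-splines forming a basis of $V_X$, and \cite[Theorem~4.41]{Schumaker:2007} supplies the dual basis. Let $\lambda_X$ be the functional dual to $N_X$; it can be realized as an $L^2$-functional with $\supp\lambda_X=\supp N_X$, depends only on $X$, and satisfies $\lambda_X(N_Y)=\delta_{XY}$ for every B-spline $N_Y$ of the basis of $V_X$. Equivalently, for $s\in V_X$ the value $\lambda_X(s)$ is the coefficient of $N_X$ in the B-spline expansion of $s$.

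The heart of the matter is then to show that whenever $\tilde X$ satisfies~\eqref{eq: overlap}, one has $N_{\tilde X}|_{[x_1,x_{p+2}]}\in V_X$. This is where~\eqref{eq: overlap} is used: the only possible breakpoints of the B-spline $N_{\tilde X}$ in the open interval $(x_1,x_{p+2})$ are entries of $\tilde X$ lying there, and the second line of~\eqref{eq: overlap} forces each of them into $\{x_2,\dots,x_{p+1}\}$; across such a simple knot $N_{\tilde X}$ is $C^{p-1}$, precisely the regularity demanded by $V_X$. Since $\supp\lambda_X=\supp N_X$, it follows that $\lambda_X(N_{\tilde X})$ equals the coefficient $c_X$ of $N_X$ in the expansion $N_{\tilde X}|_{[x_1,x_{p+2}]}=\sum_Y c_Y N_Y$ over the B-spline basis of $V_X$. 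If $\tilde X=X$ then $c_X=1$. If $\tilde X\ne X$, I would argue from~\eqref{eq: overlap} that $[x_1,x_{p+2}]\not\subseteq[\tilde x_1,\tilde x_{p+2}]$ — otherwise the first line of~\eqref{eq: overlap} yields $X\subseteq\tilde X$, and since both are $(p+2)$-element sets of distinct reals, $X=\tilde X$ — whence, applying~\eqref{eq: overlap} once more to $\tilde x_1$ respectively $\tilde x_{p+2}$, at least one of the outermost knot intervals $(x_1,x_2)$, $(x_{p+1},x_{p+2})$ of $V_X$ is disjoint from $\supp N_{\tilde X}$. On such an interval $N_{\tilde X}$ vanishes identically, hence so does $\sum_Y c_Y N_Y$; the B-splines of $V_X$ that are positive on that interval are linearly independent there, so their coefficients vanish, and $N_X$ is among them because it is strictly positive throughout $(x_1,x_{p+2})$. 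Thus $c_X=0$, i.e.\ $\lambda_X(N_{\tilde X})=\delta_{X\tilde X}$.

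The step I expect to need the most care is the last configuration argument: checking, in all cases (proper overlap, touching or disjoint supports, and boundary coincidences such as $x_1=\tilde x_1$ or $x_{p+2}=\tilde x_{p+2}$, which in fact already force $X=\tilde X$), that $\tilde X\ne X$ leaves an outermost knot interval of $V_X$ free of $\supp N_{\tilde X}$ — equivalently, that~\eqref{eq: overlap} makes $X$ and $\tilde X$ both $(p+2)$-windows of the merged knot sequence $X\cup\tilde X$. It is convenient here that the local index vectors of admissible meshes consist of pairwise distinct reals, so all knots are simple and no multiplicity bookkeeping is required; everything else is a direct appeal to \cite[Theorem~4.41]{Schumaker:2007}.
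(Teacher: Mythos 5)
Your proposal is correct, and it rests on the same foundation as the paper's proof: both reduce the statement to the de Boor--Fix dual functional of \cite[Theorem 4.41]{Schumaker:2007}. The difference is in which half of the argument gets written out. The paper displays the explicit integral representation of $\lambda_X$ (perfect B-spline, $G_X$, $\phi_X$) and then settles the duality under \eqref{eq: overlap} with a bare ``follows by construction, see Theorem 4.41''; you instead take $\lambda_X$ as a black box and supply precisely the step the paper delegates to the citation: that \eqref{eq: overlap} places $N_{\tilde X}|_{[x_1,x_{p+2}]}$ in the local spline space $V_X$ (every knot of $\tilde X$ interior to $(x_1,x_{p+2})$ is forced to be one of $x_2,\dots,x_{p+1}$, and the knots are simple since local index vectors are drawn from strictly increasing global index sets), and that its $N_X$-coefficient is $\delta_{X\tilde X}$. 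Your coefficient-vanishing argument is sound: if $\tilde X\neq X$ then $[x_1,x_{p+2}]\not\subseteq[\tilde x_1,\tilde x_{p+2}]$ (else the first line of \eqref{eq: overlap} gives $X=\tilde X$), so either $\tilde x_1$ or $\tilde x_{p+2}$ lies in $[x_1,x_{p+2}]$ strictly inside or the supports barely meet, and in every case one outermost knot interval of $V_X$ misses $\supp N_{\tilde X}$; local linear independence of the B-splines active there, among which $N_X$ appears since it is positive on all of $(x_1,x_{p+2})$, then kills $c_X$. What your route buys is a self-contained verification of the overlap mechanism, equivalent to observing that $X$ and $\tilde X$ are both $(p+2)$-windows of the merged knot sequence; what it omits, and the paper retains, is the concrete formula for $\lambda_X$, which is what certifies the support property $\supp\lambda_X=\supp N_X$ asserted in the statement, so that claim in your write-up still leans on the cited construction.
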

\begin{proof}
Following \cite{Schumaker:2007}, we construct a dual functional
on the same local knot vector $X$ which we denote by $\lambda_{X}:L^2\bigl([0,1]\bigr)\to\reell$. For details, see \cite[Theorem 4.34, 4.37, and 4.41]{Schumaker:2007}.
Let $y_j=\cos\bigl(\tfrac{p-j+1}{p+1}\pi\bigr)$ for $j=0,\dots,p+1$.
Using divided differences, the perfect B-spline of order $p+1$ is defined by
\[B^*_{p+1}(x)\sei (p+1)\,(-1)^{p+1}\bigl[y_0,\dots,y_{p+1}\bigr]\left((x-\bullet)_+\right)^p\]
and satisfies (amongst other things) $\int_{-1}^1B^*_{p+1}(x)\de x=1$ as depicted in Figure~\ref{pic: perfect B-spline}. 
\begin{figure}[tb]
 \centering
 \includegraphics[width=.5\textwidth]{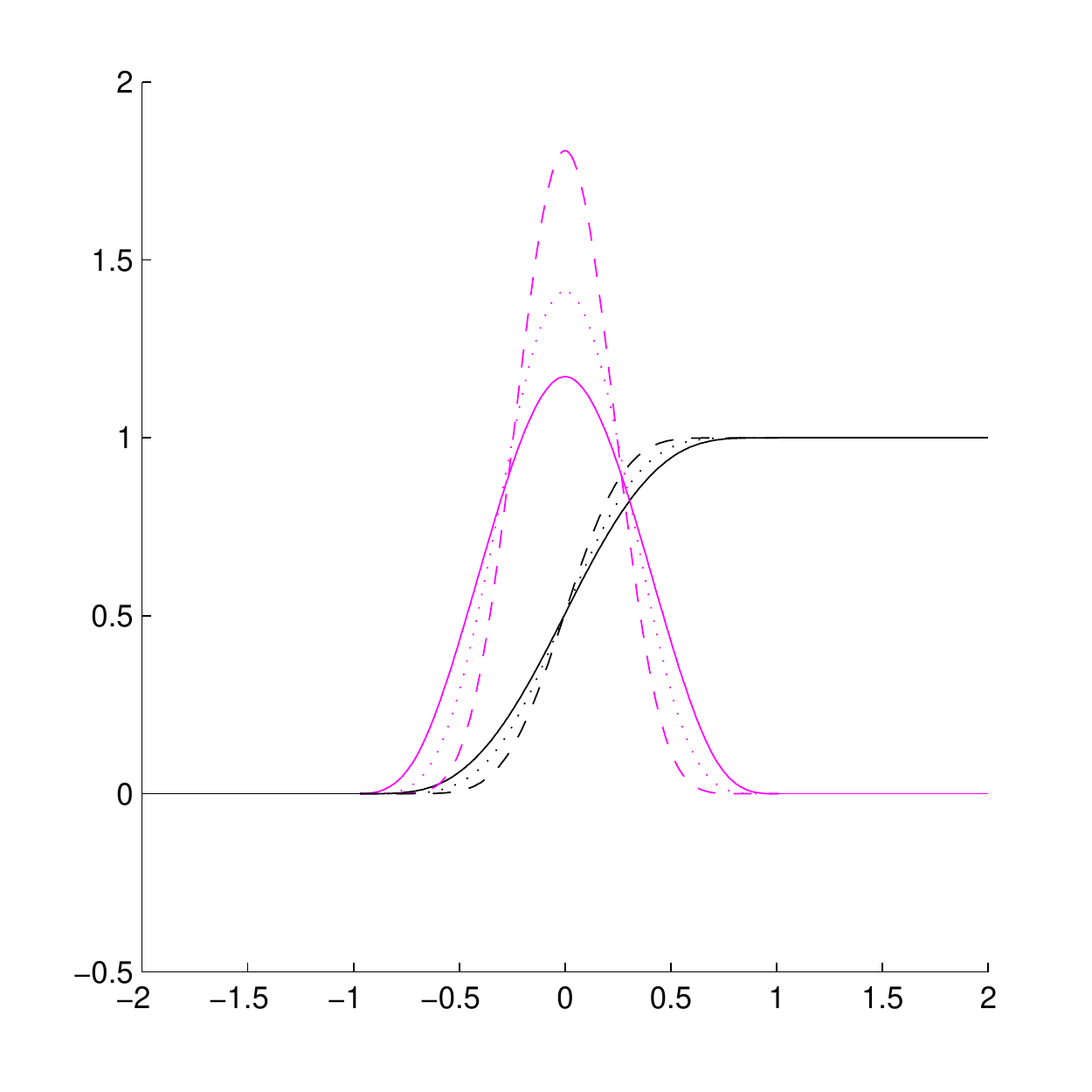}
 \caption{Plot of the perfect B-splines $B^*_4$ (solid), $B^*_6$ (dotted), $B^*_{10}$ (dashed) and the corresponding antiderivatives.}
 \label{pic: perfect B-spline}
\end{figure}
Set \[G_{X}(x) \sei \int_{-1}^{\tfrac{2x-{x_1}-{x_{p+2}}}{{x_{p+2}}-{x_1}}}B^*_{p+1}(t)\de t \quad\text{for }{x_1}\le x\le {x_{p+2}}\] and
\[\phi_{X}(x)=\tfrac1{p!}\left(x-{x_2}\right)\cdots\bigl(x-{x_{p+1}}\bigr).\] We define the dual functional by 
\begin{equation}\label{eq:def_lambda}
\lambda_{X}(f) = \int_{{x_1}}^{{x_{p+2}}}\mspace{-9mu}f\, D^{p+1}(G_{X}\,\phi_{X})\de x\quad\text{for all }f\in L^2\bigl([0,1]\bigr).
\end{equation}
Note in particular that for all $f\in L^2(\reell)$ with $f|_{[x_1,x_{p+2}]}=0$ follows $\lambda_{X}(f)=0$.
If \eqref{eq: overlap} holds then the claim follows by construction, see \cite[Theorem 4.41]{Schumaker:2007}.
\end{proof}
We say that two index vectors verifying \eqref{eq: overlap} \emph{overlap}. 
In order to define the set of T-spline blending functions of which we desire linear independence, we construct local index vectors for each active node.

\begin{df}
We define the functional $\lambda_v$ by
 \[\lambda_v(B_{w})\sei \lambda_{\xx(v)}(N_{\xx(w)})\cdot\lambda_{\yy(v)}(N_{\yy(w)})\cdot\lambda_{\zz(v)}(N_{\zz(w)})\]
 using the one-dimensional functional $\lambda_X$ defined in \eqref{eq:def_lambda}.
\end{df}

\begin{df}\label{df: partial overlap}%
We say that a couple of nodes $v,w\in\N$ \emph{partially overlap} if their index vectors overlap in at least two out of three dimensions; this is, if (at least) two of the pairs \[\bigl(\xx(v),\xx(w)\bigr),\ \bigl(\yy(v),\yy(w)\bigr),\ \bigl(\zz(v),\zz(w)\bigr)\] overlap in the sense of Proposition~\ref{prp:overlap}.\bigskip
\end{df}

\begin{df}
A mesh $\G$ is \emph{dual-compatible (DC)} if any two active nodes $v,w\in\N_A$ with $\bigl|\supp B_v \cap\supp B_w\bigr|>0$ partially overlap.
The set of dual-compatible meshes is denoted by $\DC$.
\end{df}
\begin{rem}
The above Definition~\ref{df: partial overlap} fulfills the definition of \emph{partial overlap} given in \cite[Def.\ 7.1]{BBSV:2014}, which is not equivalent. The definition given in \cite{BBSV:2014} is more general, and the corresponding mesh classes are nested in the sense 
$\DC\subseteq\DC_{\text{\cite{BBSV:2014}}}$. However, we do have equivalence of these definitions in the two-dimensional setting.
\end{rem}

The following lemma states that the perturbed regions from Definition~\ref{df: perturbed regions} indicate non-overlapping knot vectors, and it is applied in the proof of Theroem~\ref{thm: AS in DC} below.
\begin{lma}\label{lma: perturbed regions indicate non-overlapping}%
Let  $q\in[0,\tilde X]$ and $v_1,v_2\in\N_A$.
If $v_1\in\Nx(q)\notni v_2$  and $\Sx(q)\cap\supp B_{v_1}\cap\supp B_{v_2}\neq\emptyset$, then $\xx(v_1)$ and $\xx(v_2)$ do not overlap in the sense of \eqref{eq: overlap}.
\end{lma}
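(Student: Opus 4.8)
The plan is to unpack the definitions directly. The hypothesis $v_1 \in \Nx(q)$ means $(q, (v_1)_2, (v_1)_3) \in \xix$, so the slice at $x = q$ is an element face passing through the projection of $v_1$; in particular $q$ is one of the knots in the $x$-index vector of $v_1$, since $\xx(v_1)$ is built from consecutive entries of $\XX((v_1)_2,(v_1)_3)$ around $v_1$ and the face at $q$ contributes such a knot. More importantly, $q \in \xx(v_1)$. On the other hand, $v_2 \notin \Nx(q)$ means $(q, (v_2)_2, (v_2)_3) \notin \xix$, i.e.\ there is no $x$-orthogonal face at $x = q$ over the relevant $y,z$-location, so $q \notin \XX((v_2)_2, (v_2)_3)$ and hence $q \notin \xx(v_2)$.

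Next I would use the overlap of supports. Since $\Sx(q) \cap \supp B_{v_1} \cap \supp B_{v_2} \neq \emptyset$, there is a point with first coordinate $q$ in both supports; projecting onto the first coordinate, $q$ lies in the interior of the convex hull of $\xx(v_1)$ and also in that of $\xx(v_2)$ — more precisely, writing $\xx(v_1) = (a_1, \dots, a_{p_1+2})$ and $\xx(v_2) = (b_1, \dots, b_{p_1+2})$, we get $a_1 \le q \le a_{p_1+2}$ and $b_1 \le q \le b_{p_1+2}$ (with appropriate care about endpoints, using that $B_v$ is supported on the closed hull and the slice meets the open support so $q$ is strictly between the extreme knots of $\xx(v_2)$, the crucial one). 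Now $q$ is an entry of $\xx(v_1)$ lying in the closed interval $[b_1, b_{p_1+2}]$, but $q \notin \{b_1, \dots, b_{p_1+2}\} = \xx(v_2)$. This is exactly a violation of the second implication in the overlap condition \eqref{eq: overlap} (with the roles $x = \xx(v_1)$, $\tilde x = \xx(v_2)$ — checking that the direction of the implication matches), so $\xx(v_1)$ and $\xx(v_2)$ do not overlap.

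The main obstacle is the boundary bookkeeping: I must rule out the degenerate possibility that $q$ coincides with an extreme knot $b_1$ or $b_{p_1+2}$ of $\xx(v_2)$, in which case $q$ would not contradict the overlap condition. This is where the hypothesis that the intersection with the \emph{slice} is nonempty as a set of positive-dimensional measure (or at least that it meets the open support) has to be used: if $q = b_1$ then $\Sx(q)$ only touches $\supp B_{v_2}$ on the boundary of its support in the $x$-direction, and since $B_{v_2}$ vanishes there, the triple intersection $\Sx(q) \cap \supp B_{v_1} \cap \supp B_{v_2}$ would be empty or, at worst, contained in a lower-dimensional boundary set that the definition of $\Rx(q)$ excludes. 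I would also need the standard fact that $q$, being the location of an $x$-orthogonal face through $v_1$'s projection within the active region, actually appears as an entry of $\xx(v_1)$ and not merely inside its hull — this follows from the construction of local index vectors as maximal consecutive runs of $\XX$, so every knot of $\XX((v_1)_2,(v_1)_3)$ strictly between the endpoints of $\xx(v_1)$ is in $\xx(v_1)$, and $q$ is such a knot because $v_1$'s projection onto $\Sx(q)$ lies in a face. Once these two endpoint/membership subtleties are dispatched, the contradiction with \eqref{eq: overlap} is immediate.
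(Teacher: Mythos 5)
Your proposal is correct and follows essentially the same route as the paper: deduce $q\in\xx(v_1)$ from $v_1\in\Nx(q)$ (hence $q\in\XX((v_1)_2,(v_1)_3)$) together with $\Sx(q)\cap\supp B_{v_1}\neq\emptyset$ and the consecutiveness of local index vectors, deduce $q\notin\xx(v_2)$ from $v_2\notin\Nx(q)$, and note that $\Sx(q)\cap\supp B_{v_2}\neq\emptyset$ places $q$ between the extreme knots of $\xx(v_2)$, violating \eqref{eq: overlap}. The endpoint ``obstacle'' you raise is vacuous -- since $q\notin\XX((v_2)_2,(v_2)_3)\supseteq\xx(v_2)$, $q$ cannot equal an extreme knot of $\xx(v_2)$, and the non-strict inequalities in \eqref{eq: overlap} make the contradiction immediate without any measure-theoretic argument about the boundary of the support.
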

This holds analogously for $\Ny(r),r\in[0,\tilde Y]$ and $\Nz(s),s\in[0,\tilde Z]$.
\begin{proof}
Let $v_1=(x_1,y_1,z_1)$. From $v_1\in\Nx(q)$ and Definition~\ref{df: perturbed regions}, we conclude that $(q,y_1,z_1)\in\xix$, and hence $q\in\XX(y_1,z_1)$. Let $\xx(v_1)=(x_1^1,\dots,x_1^{p_1+2})$ be the local $x$-direction knot vector associated to $v_1$, then \mbox{$\supp B_{v_1}\cap \Sx(q)\neq\emptyset$} implies that $x_1^1\le q\le x_1^{p_1+2}$. This and  $q\in\XX(y_1,z_1)$ yield $q\in\xx(v_1)$.
Let \mbox{$v_2=(x_2,y_2,z_2)$}. From $v_2\notin\Nx(q)$, we get $(q,y_2,z_2)\notin\xix$, hence $q\notin\XX(y_2,z_2)$, and in particular \mbox{$q\notin\xx(v_2)$}. Let \mbox{$\xx(v_2)=(x_2^1,\dots,x_2^{p_1+2})$} be the local knot vector associated to $v_2$, then \mbox{$\supp B_{v_2}\cap \Sx(q)\neq\emptyset$} implies that $x_2^1\le q\le x_2^{p_1+2}$. Together with $\xx(v_1)\ni q\notin\xx(v_2)$, we see that $v_1$ and $v_2$ do not overlap.
\end{proof}

\begin{thm}\label{thm: AS in DC}%
$\AS=\DC$.
\end{thm}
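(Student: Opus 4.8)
The plan is to prove the two inclusions $\AS\subseteq\DC$ and $\DC\subseteq\AS$ separately, with the first being the substantial direction.

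\textbf{The inclusion $\AS\subseteq\DC$.}
Let $\G\in\AS$ and take two active nodes $v,w\in\N_A$ with $|\supp B_v\cap\supp B_w|>0$. I must show that $v,w$ partially overlap, i.e.\ that their local index vectors overlap in at least two of the three directions. Suppose, for contradiction, that they overlap in at most one direction; without loss of generality the pairs $(\xx(v),\xx(w))$ and $(\yy(v),\yy(w))$ both \emph{fail} to overlap. I would first extract, from the failure of overlap in the $x$-direction, a witnessing coordinate $q$: by Proposition~\ref{prp:overlap} there is a knot lying strictly between the extreme knots of one vector but missing from the other, and such a knot corresponds to an $x$-orthogonal face, so it places one of $v,w$ in $\Nx(q)$ and the other outside it. Because $\supp B_v\cap\supp B_w$ has positive measure and the supports are products of the one-dimensional supports on the respective index vectors, the slice $\Sx(q)$ meets both $\supp B_v$ and $\supp B_w$; hence the point of $\supp B_v\cap\supp B_w$ can be pushed into $\Sx(q)$ and it lies in $\Rx(q)\subseteq\Rx$. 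The same argument applied to the $y$-direction produces a coordinate $r$ with a point of $\supp B_v\cap\supp B_w$ in $\Ry(r)\subseteq\Ry$. The delicate point is that I need a \emph{single} point witnessing both, i.e.\ a point of $\Rx\cap\Ry$, contradicting analysis-suitability. This is where I would use that $\supp B_v\cap\supp B_w$ is a (nonempty, positive-measure) box $I_x\times I_y\times I_z$: the $x$-witness lets me take $q\in I_x$, the $y$-witness lets me take $r\in I_y$, and then the point $(q,r,z)$ for suitable $z\in I_z$ lies simultaneously in $\Sx(q)\cap\supp B_v\cap\supp B_w$ and $\Sy(r)\cap\supp B_v\cap\supp B_w$, hence in $\Rx(q)\cap\Ry(r)\subseteq\Rx\cap\Ry=\emptyset$ — contradiction. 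The main obstacle, and the step I expect to require the most care, is verifying that the witnessing $q$ really can be chosen \emph{inside} the $x$-projection $I_x$ of the common support (rather than merely in $[x_1^1,x_1^{p_1+2}]$), and likewise for $r$; this needs a careful look at how the missing knot relates to the overlap of the one-dimensional supports, and it is essentially the only nontrivial point.

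\textbf{The inclusion $\DC\subseteq\AS$.}
For the converse I argue contrapositively: assume $\G\notin\AS$, so without loss of generality $\Rx\cap\Ry\neq\emptyset$, and I must exhibit two active nodes with overlapping supports that do \emph{not} partially overlap, i.e.\ $\G\notin\DC$. Pick a point $P$ in $\Rx(q)\cap\Ry(r)$ for suitable $q,r$. By the definition of $\Rx(q)$ there are $v_1\in\Nx(q)$ and $v_2\in\N_A\setminus\Nx(q)$ with $P\in\supp B_{v_1}\cap\supp B_{v_2}\cap\Sx(q)$; Lemma~\ref{lma: perturbed regions indicate non-overlapping} then tells us $\xx(v_1)$ and $\xx(v_2)$ do not overlap. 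Similarly $P\in\Ry(r)$ yields $w_1\in\Ny(r)$, $w_2\in\N_A\setminus\Ny(r)$ with non-overlapping $\yy(w_1),\yy(w_2)$. The issue here is that a priori these are four nodes, not two; I need to produce a single pair. I would examine whether the point $P$, being in both slice perturbations, can be realized with a common pair — concretely, I expect that one can take the \emph{same} pair in both directions, because $P$ lying in $\Sx(q)\cap\Sy(r)$ together with the local structure forces the relevant nodes to coincide, or at least that among $\{v_1,v_2\}$ and $\{w_1,w_2\}$ one can select $v\in\{v_1,v_2\}$, $w\in\{w_1,w_2\}$ with $\supp B_v\cap\supp B_w\ni$ (a neighbourhood of) $P$ having positive measure and with both the $x$- and $y$-index-vector pairs non-overlapping. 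Granting that, $v$ and $w$ have overlapping supports but fail overlap in two directions, so they do not partially overlap and $\G\notin\DC$.

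In both directions the conceptual engine is the same: a failure of one-dimensional overlap is detected by an $x$- (resp.\ $y$-) orthogonal face separating two active nodes, which is exactly what membership in $\Nx(q)$ versus its complement encodes, and Lemma~\ref{lma: perturbed regions indicate non-overlapping} and Proposition~\ref{prp:overlap} let me pass freely between ``non-overlapping index vectors'' and ``separating face inside the common support.'' I would present $\AS\subseteq\DC$ first and in full detail, then remark that $\DC\subseteq\AS$ follows by reversing the construction, spelling out only the selection of the common pair since that is the one genuinely new bookkeeping step. The hardest part overall is the coordinate-selection argument — ensuring the separating coordinate lies in the interior of the common support — which I would isolate as a short sublemma if it becomes lengthy.
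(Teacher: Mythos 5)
Your first direction ($\AS\subseteq\DC$) is essentially the paper's argument: assume a pair $v,w$ with positive-measure common support fails to overlap in, say, $x$ and $y$, extract witnesses $q$ and $r$, and exhibit $\{q\}\times\{r\}\times[z\mn,z\mx]\subseteq\Rx(q)\cap\Ry(r)$. The ``delicate point'' you isolate --- that the witnessing knot $q$ can be chosen inside the $x$-projection $[x\mn,x\mx]$ of the common support --- is in fact automatic: condition \eqref{eq: overlap} only quantifies over knots of one vector lying between the extreme knots of the other, so its failure directly yields $q\in\xx(v)$ with $q\notin\xx(w)$ (or vice versa) and $x_1^w\le q\le x_{p_1+2}^w$, hence $q\in[x\mn,x\mx]$. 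No sublemma is needed there.

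The second direction, however, contains a genuine gap exactly where you write ``Granting that''. From a point $v\in\Rx(q)\cap\Ry(r)$ you obtain four nodes $v_1\in\Nx(q)\notni v_2$ and $v_3\in\Ny(r)\notni v_4$ whose supports all contain $v$, and Lemma~\ref{lma: perturbed regions indicate non-overlapping} gives non-overlap of $\xx(v_1),\xx(v_2)$ and of $\yy(v_3),\yy(v_4)$. Your first guess --- that the two pairs coincide --- is false in general, and the selection of a single pair failing to partially overlap is the actual mathematical content of this direction, not bookkeeping. The paper resolves it by an exhaustive case analysis on the membership of the four nodes in $\Nx(q)$ and $\Ny(r)$: for instance, if $v_1\in\Ny(r)$ and $v_4\notin\Nx(q)$, then the pair $(v_1,v_4)$ is separated by the $x$-face at $q$ \emph{and} by the $y$-face at $r$, and since $v\in\Sx(q)\cap\Sy(r)\cap\supp B_{v_1}\cap\supp B_{v_4}$, two applications of Lemma~\ref{lma: perturbed regions indicate non-overlapping} (once in $x$, once in $y$) show that $v_1$ and $v_4$ overlap in neither direction, so they do not partially overlap. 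Five such cases, checked against all sixteen truth-value configurations (Fig.~\ref{tb: cases}), cover every possibility. Without this, or an equivalent selection argument, your proof of $\DC\subseteq\AS$ is incomplete.
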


\begin{proof}
``$\subseteq$''.
Assume for contradiction a mesh $\G$ which is not DC, hence there exist active nodes \mbox{$v,w\in\N_A$} with \mbox{$\bigl|\supp B_v\cap\supp B_w\bigr|>0$} that do not overlap in two dimensions, without loss of generality  $x$ and $y$.
We will show that there exist two slice perturbations $\Rx(q)$ and $\Ry(r)$ with nonempty intersection.
We denote $v=(v_1,v_2,v_3)$, $w=(w_1,w_2,w_3)$ and $\xx(v)=(x^v_1,\dots,x^v_{p_1+2})$. The elements of $\yy(v),\xx(w),\yy(w)$ are denoted analogously. Moreover we define
\begin{alignat*}{2}
x\mn&\sei\max(x^v_1,x^w_1),&\enspace x\mx&\sei\min(x^v_{p_1+2},x^w_{p_1+2})\\
y\mn&\sei\max(y^v_1,y^w_1),& y\mx&\sei\min(y^v_{p_2+2},y^w_{p_2+2})\\
z\mn&\sei\max(z^v_1,z^w_1),& z\mx&\sei\min(z^v_{p_3+2},z^w_{p_3+2})
\end{alignat*}
and note that
\begin{align*}
\supp B_v \cap  \supp B_w = [x\mn,x\mx]\times[y\mn,y\mx]\times[z\mn,z\mx].
\end{align*}
Since $\xx(v)$ and $\xx(w)$ do not overlap, there exists $q\in[x\mn,x\mx]$ with either  \mbox{$\xx(v)\ni q\notin\xx(w)$}\enspace or \mbox{$\xx(v)\notni q\in\xx(w)$}. Without loss of generality we assume $\xx(v)\ni q\notin\xx(w)$. Since $\xx(v)\subseteq \XX(v_2,v_3)$, it follows by definition that $(q,v_2,v_3)\in\xix$ and hence $v\in\Nx(q)$.
Since $q\notin\xx(w)$ and hence  $(q,w_2,w_3)\notin\xix$, it follows that $w\notin\Nx(q)$.
Then
\begin{align*}
\Rx(q) &= \Sx(q)\ \cap\  \mathbox{ \bigcup_{v'\in\Nx(q)} } \supp B_{v'}\ \cap\  \mathbox{ \bigcup_{v'\in\N_A\smallsetminus\Nx(q)} } \supp B_{v'}\\
 &\supseteq \Sx(q)\ \cap \ \supp B_v\ \cap \ \supp B_w\\
 &= \{q\}\times[y\mn,y\mx]\times[z\mn,z\mx].
 \intertext{Analogously, we have}
 \Ry(r) &\supseteq [x\mn,x\mx]\times\{r\}\times[z\mn,z\mx]
 \intertext{and hence}
 \Rx(q) \cap \Ry(r) &\supseteq \{q\}\times\{r\}\times[z\mn,z\mx]\neq\emptyset,
 \end{align*}
which means that the mesh $\G$ is not analysis-suitable.\bigskip

``$\supseteq$''.
Assume for contradiction that the mesh is not analysis-suitable, and w.l.o.g.\ that there is \mbox{$v=(q,r,s)\in\reell^3$} such that $\Rx\cap\Ry\supseteq\{v\}\neq\emptyset$. Definition~\ref{df: perturbed regions} implies that there exist
$v_1,v_2,v_3,v_4\in\N_A$ with
\mbox{$v_1\in\Nx(q)\notni v_2$} and \mbox{$v_3\in\Ny(r)\notni v_4$}\enspace such that
\[v\ \in\ \Sx(q)\cap\Sy(r)\cap\supp B_{v_1}\cap\supp B_{v_2}\cap\supp B_{v_3}\cap\supp B_{v_4}.\]
Lemma~\ref{lma: perturbed regions indicate non-overlapping} yields that $\xx(v_1)$ and $\xx(v_2)$ do not overlap, and that $\yy(v_3)$ and $\yy(v_4)$ do not overlap.

\textit{Case 1.} If $v_1\in\Ny(r)\notni v_2$, or $v_1\notin\Ny(r)\ni v_2$, then $v_1$ and $v_2$ do not partially overlap.

\textit{Case 2.} If $v_1\in\Ny(r)$ and $v_4\notin\Nx(q)$, then $v_1$ and $v_4$ do not partially overlap.

\textit{Case 3.} If $v_1\notin\Ny(r)$ and $v_3\notin\Nx(q)$, then $v_1$ and $v_3$ do not partially overlap.

\textit{Case 4.} If $v_2\in\Ny(r)$ and $v_4\in\Nx(q)$, then $v_2$ and $v_4$ do not partially overlap.

\textit{Case 5.} If $v_2\notin\Ny(r)$ and $v_3\in\Nx(q)$, then $v_2$ and $v_3$ do not partially overlap.

In all cases (see the table in Fig.~\ref{tb: cases}), the mesh is not dual-compatible. This concludes the proof.
\end{proof}
\begin{figure}[ht]
\renewcommand \arraystretch 1
\newcommand \Hline {\hhline{*{3}{-|}-||-}}
\centering
\begin{tabular}{c|c|c|c||l}
$v_1\in\Ny(r)$ & $v_2\in\Ny(r)$ & $v_3\in\Ny(r)$ & $v_4\in\Ny(r)$ & case(s) \\\hhline{*{3}{=:}=::=}
\true & \true & \true & \true & 4 \\\Hline
\true & \true & \true & \false & 2 \\\Hline
\true & \true & \false & \true & 4 \\\Hline 
\true & \true & \false & \false & 2 \\\Hline 
\true & \false & \true & \true & 1, 5 \\\Hline 
\true & \false & \true & \false & 1, 2, 5 \\\Hline 
\true & \false & \false & \true & 1 \\\Hline 
\true & \false & \false & \false & 1, 2 \\\Hline 
\false & \true & \true & \true & 1, 4 \\\Hline 
\false & \true & \true & \false & 1 \\\Hline 
\false & \true & \false & \true & 1, 3, 4 \\\Hline 
\false & \true & \false & \false & 1, 3 \\\Hline 
\false & \false & \true & \true & 5 \\\Hline 
\false & \false & \true & \false & 5 \\\Hline 
\false & \false & \false & \true & 3 \\\Hline 
\false & \false & \false & \false & 3
\end{tabular}
\caption{The five cases considered in the proof of Theorem~\ref{thm: AS in DC} cover all possible configurations.}
\label{tb: cases}
\end{figure}

\begin{thm}\label{thm: DC has dual basis}
 Let $\G$ be a DC T-mesh. Then the set of functionals $\{\lambda_v\mid v\in\N_A\}$ is a set of dual functionals for the set $\{B_v\mid v\in\N_A\}$.
\end{thm}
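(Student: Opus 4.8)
The plan is to show that $\lambda_v(B_w) = \delta_{vw}$ for all active nodes $v,w\in\N_A$. By the product structure of both the T-spline blending functions and the functionals $\lambda_v$, we have
\[
\lambda_v(B_w) = \lambda_{\xx(v)}(N_{\xx(w)})\cdot\lambda_{\yy(v)}(N_{\yy(w)})\cdot\lambda_{\zz(v)}(N_{\zz(w)}),
\]
so it suffices to analyze each one-dimensional factor using Proposition~\ref{prp:overlap}. First I would treat the case $v=w$: here $\xx(v)=\xx(w)$, $\yy(v)=\yy(w)$, $\zz(v)=\zz(w)$, each pair trivially overlaps in the sense of \eqref{eq: overlap}, and hence each factor equals $1$ by Proposition~\ref{prp:overlap}, giving $\lambda_v(B_v)=1$.

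For the case $v\neq w$, I would distinguish according to whether the supports overlap. If $\bigl|\supp B_v\cap\supp B_w\bigr|=0$, then $\supp B_w\cap\supp\lambda_v = \supp N_{\xx(v)}\times\supp N_{\yy(v)}\times\supp N_{\zz(v)}$ (which equals $\supp B_v$) is disjoint from $\supp B_w$ up to a null set, so at least one of the three one-dimensional supports $\supp N_{\xx(v)}$ and $\supp N_{\xx(w)}$ (resp.\ $y$, $z$) is essentially disjoint; by the remark after \eqref{eq:def_lambda} the corresponding factor $\lambda_{\xx(v)}(N_{\xx(w)})$ vanishes, and so $\lambda_v(B_w)=0$. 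If instead $\bigl|\supp B_v\cap\supp B_w\bigr|>0$, then since $\G$ is dual-compatible, $v$ and $w$ partially overlap, i.e.\ their index vectors overlap in at least two of the three directions. Because $v\neq w$, in that overlapping direction — say $x$ — we have $\xx(v)\neq\xx(w)$ while the overlap condition \eqref{eq: overlap} holds, so Proposition~\ref{prp:overlap} gives $\lambda_{\xx(v)}(N_{\xx(w)})=\delta_{\xx(v)\,\xx(w)}=0$, and again $\lambda_v(B_w)=0$.

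I expect the main subtlety to be in the $v\neq w$ sub-case where the supports overlap but the index vectors might \emph{not} differ in an overlapping direction: one needs to argue that $v\neq w$ implies $\xx(v)\neq\xx(w)$ or $\yy(v)\neq\yy(w)$ or $\zz(v)\neq\zz(w)$, and then combine this with the fact that partial overlap guarantees overlap in \emph{two} directions, so that at least one of those two overlapping directions is also a direction of non-equal index vectors — forcing the relevant factor to be zero via the Kronecker delta in Proposition~\ref{prp:overlap}. This pigeonhole step (two overlapping directions, at least one direction with distinct index vectors, hence their intersection is nonempty) is the only place requiring care; the rest reduces to bookkeeping with the product structure and the support property of $\lambda_X$ noted in Proposition~\ref{prp:overlap}. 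I would also remark that, conversely, an index vector $\xx(v)$ is uniquely determined by $v_1$ together with the global index set $\XX(v_2,v_3)$, so equality of all three index vectors would force $v=w$; this justifies the pigeonhole claim.
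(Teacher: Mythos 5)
Your overall structure (product factorization, case split on $\bigl|\supp B_v\cap\supp B_w\bigr|$, reduction to one-dimensional Kronecker deltas via Proposition~\ref{prp:overlap}) matches the paper's proof, and both the disjoint-support case and the case $v=w$ are handled correctly. The gap is in the final ``pigeonhole'' step, which you yourself flag as the delicate point. You have three directions, index vectors that differ in \emph{at least one} of them, and overlap guaranteed in \emph{at least two} of them; since $1+2=3$, these two sets of directions need not intersect, so you cannot conclude that some overlapping direction also carries distinct index vectors. The problematic configuration is exactly $v\neq w$ with, say, $v_1=w_1$ and $v_2=w_2$, where dual-compatibility happens to deliver overlap only in the $x$- and $y$-directions: there $\xx(v)=\xx(w)$ and $\yy(v)=\yy(w)$ are entirely possible (e.g.\ in a locally uniform region), both of those factors equal $1$, and your argument gives no control over the third factor $\lambda_{\zz(v)}(N_{\zz(w)})$ --- Proposition~\ref{prp:overlap} only yields the Kronecker delta for \emph{overlapping} index vectors, and nothing is known a priori about $\lambda_{X}(N_{\tilde X})$ for non-overlapping $X,\tilde X$ whose supports intersect with positive measure.

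The paper closes precisely this case with one extra observation you are missing: if $v_1=w_1$ and $v_2=w_2$, then $v$ and $w$ are aligned in the $z$-direction, so $\zz(v)$ and $\zz(w)$ are both vectors of $p_3+2$ \emph{consecutive} entries of the \emph{same} global index set $\ZZ(v_1,v_2)=\ZZ(w_1,w_2)$, and two such consecutive subvectors of one sorted set automatically satisfy \eqref{eq: overlap}. Hence overlap in the third direction comes for free in this configuration, Proposition~\ref{prp:overlap} applies there as well, and the third factor equals $\delta_{v_3w_3}=0$ because $v\neq w$ forces $v_3\neq w_3$. Adding this alignment argument repairs your proof and makes it coincide with the paper's.
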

The proof below follows the ideas of \cite[Proposition~5.1]{BBCS:2012} and \cite[Proposition~7.3]{BBSV:2014}.
\begin{proof}
Let $v,w\in\N_A$. We need to show that 
\begin{equation}\label{eq:DC_claim}
\lambda_v(B_w) = \delta_{vw},
\end{equation}
with $\delta$ representing the Kronecker symbol.

If $\supp B_v$ and $\supp B_w$ are disjoint (or have an intersection of empty interior), then at least one of the pairs 
\[\bigl(\supp(N_{\xx(v)}),\supp(N_{\xx(w)})\bigr),\ \bigl(\supp(N_{\yy(v)}),\supp(N_{\yy(w)})\bigr),\ \bigl(\supp(N_{\zz(v)}),\supp(N_{\zz(w)})\bigr)\]
has an intersection with empty interior. Assume w.l.o.g.\ that $\left|\supp(N_{\xx(v)})\cap\supp(N_{\xx(w)})\right|=0$, then 
\[\lambda_v(B_w) = \underbrace{\lambda_{\xx(v)}(N_{\xx(w)})}_0\cdot \lambda_{\yy(v)}(N_{\yy(w)})\cdot \lambda_{\zz(v)}(N_{\zz(w)})=0.\]
Assume that $\supp B_v$ and $\supp B_w$ have an intersection with nonempty interior.
Since the mesh $\G$ is DC, the two nodes overlap in at least two dimensions. Without loss of generality we may assume the index vectors $\bigl(\xx(v),\xx(w)\bigr)$ and $\bigl(\yy(v),\yy(w)\bigr)$ overlap. Proposition~\ref{prp:overlap} yields 
\[\lambda_{\xx(v)}(N_{\xx(w)}) = \delta_{v_1w_1} \enspace\text{and}\enspace \lambda_{\yy(v)}(N_{\yy(w)}) = \delta_{v_2w_2} .\]
The above identities immediately prove \eqref{eq:DC_claim} if $v_1\neq w_1$ or $v_2\neq w_2$. If on the contrary, $v_1=w_1$ and $v_2\neq w_2$, then $v$ and $w$ are aligned in $z$-direction, this is, $\zz(v)$ and $\zz(w)$ are both vectors of $p+2$ consecutive indices from the same index set 
$\ZZ(v_1,v_2)=\ZZ(w_1,w_2)$. 
Hence $v$ and $w$ must overlap also in $z$-direction. Again, Proposition~\ref{prp:overlap} yields
\[\lambda_{\zz(v)}(N_{\zz(w)}) = \delta_{v_3w_3},\]
which concludes the proof.
\end{proof}

\begin{crl}[{\cite[Proposition~7.4]{BBSV:2014}}]
Let $\G$ be a DC T-mesh. Then the set $\{B_v\mid v\in\N_A\}$ is linear independent.
\end{crl}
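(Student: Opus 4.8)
The plan is to derive linear independence directly from the existence of a dual system, which is the classical argument. Suppose $\sum_{v\in\N_A} c_v B_v = 0$ as a function on the index domain. By Theorem~\ref{thm: DC has dual basis}, the functionals $\{\lambda_v\mid v\in\N_A\}$ satisfy $\lambda_v(B_w)=\delta_{vw}$. Fixing an arbitrary $w\in\N_A$ and applying $\lambda_w$ to both sides of the vanishing linear combination, I would use linearity of $\lambda_w$ to obtain $\sum_{v} c_v\,\lambda_w(B_v) = \lambda_w(0) = 0$; since $\lambda_w(B_v)=\delta_{wv}$, the sum collapses to $c_w=0$. As $w$ was arbitrary, all coefficients vanish, which is exactly linear independence of $\{B_v\mid v\in\N_A\}$.

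The steps in order are: (1) take a finite linear combination equal to the zero function; (2) invoke Theorem~\ref{thm: DC has dual basis} to get the biorthogonal functionals; (3) apply each $\lambda_w$ and use linearity plus the Kronecker property to isolate $c_w$; (4) conclude. One small point worth making explicit is that the $\lambda_w$ are well-defined on the span of the $B_v$ and that applying a linear functional commutes with the finite sum — this is immediate since each $\lambda_w$ is an $L^2$-functional (by Proposition~\ref{prp:overlap}) and the index set $\N_A$ is finite for any mesh obtained by finitely many refinements.

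I do not expect a genuine obstacle here: this is the standard ``dual basis implies linear independence'' implication, and all the work has already been done in Theorem~\ref{thm: DC has dual basis}. If anything, the only thing to be careful about is not claiming more than linear independence (e.g.\ not asserting that the $B_v$ form a partition of unity or a basis of any particular spline space), since the corollary only states independence; so the proof should stop right after extracting $c_w=0$ for all $w$.

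\begin{silentproof}
Let $\{c_v\}_{v\in\N_A}$ be coefficients with $\sum_{v\in\N_A}c_vB_v=0$. Fix $w\in\N_A$ and apply the functional $\lambda_w$ from Theorem~\ref{thm: DC has dual basis}. Since $\N_A$ is finite and $\lambda_w$ is linear,
\[
0=\lambda_w\Bigl(\sum_{v\in\N_A}c_vB_v\Bigr)=\sum_{v\in\N_A}c_v\,\lambda_w(B_v)=\sum_{v\in\N_A}c_v\,\delta_{wv}=c_w.
\]
As $w\in\N_A$ was arbitrary, all $c_v$ vanish, so $\{B_v\mid v\in\N_A\}$ is linearly independent.
\end{silentproof}
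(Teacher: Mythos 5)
Your proof is correct and follows exactly the paper's argument: apply the dual functionals $\lambda_w$ from Theorem~\ref{thm: DC has dual basis} to the vanishing linear combination and use linearity with the Kronecker property to extract $c_w=0$. No differences worth noting.
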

\begin{proof}
Assume \[\sum_{v\in\N_A}c_vB_v=0\] for some coefficients $\{c_v\}_{v\in\N_A}\subseteq\reell$. Then, for any $w\in\N_A$, applying $\lambda_w$ to the sum, using linearity and Theorem~\ref{thm: DC has dual basis}, we get
\[c_w = \lambda_w\,\Bigl(\sum_{v\in\N_A}c_vB_v\Bigr)=0.\]
\raiseqed
\end{proof}

\section{Linear Complexity}\label{sec: complexity}
This section is devoted to a complexity estimate in the style of a famous estimate for the Newest Vertex Bisection on triangular meshes given by Binev, Dahmen and DeVore \cite{BDV:2004} and, in an alternative version, by Stevenson~\cite{Stevenson:2007}.
Linear Complexity of the refinement procedure is an inevitable criterion for optimal convergence rates in the Adaptive Finite Element Method (see e.g.\ \cite{BDV:2004,Stevenson:2007,CFPP:2014} and \cite[Conclusions]{Buffa:Giannelli:2015}). The estimate and its proof follow our own work \cite{Morgenstern:Peterseim:2015,BGMP:2016}, which we generalize now to three dimensions and $m$-graded refinement.
The estimate reads as follows.

\begin{thm}\label{thm: complexity}
Any sequence of admissible meshes $\G_0,\G_1,\dots,\G_J$ with \[\G_j=\refine(\G_{j-1},\M_{j-1}),\quad\M_{j-1}\subseteq\G_{j-1}\quad\text{for}\enspace j\in\{1,\dots,J\}\] satisfies
\[\left|\G_J\setminus\G_0\right|\ \le\ C_{\mathbf p,m}\sum_{j=0}^{J-1}|\M_j|\ ,\]
with $C_{\mathbf p,m}=\tfrac{m^{1/3}}{1-m^{-1/3}}\,\bigl(4d_1+1\bigr)\,\bigl(4d_2+m^{1/3}\bigr)\,\bigl(4d_3+m^{2/3}\bigr)$ and $d_1,d_2,d_3$ from Lemma~\ref{lma: K1 in refMS => K2 in S} below.
\end{thm}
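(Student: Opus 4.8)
The plan is to follow the classical Binev--Dahmen--DeVore/Stevenson strategy adapted to the graded refinement of this paper: assign to each marked element $K\in\M_j$ a bounded ``budget'' of elements that it can be held responsible for, so that every element generated anywhere in the sequence $\G_0,\dots,\G_J$ is charged to at least one marking, and each marking is charged at most $C_{\mathbf p,m}$ times. Concretely, I would first invoke the auxiliary Lemma~\ref{lma: K1 in refMS => K2 in S} (with the constants $d_1,d_2,d_3$) to control, for a single refinement step $\G_{j+1}=\refine(\G_j,\M_j)$, the set of newly created elements: every $K'\in\G_{j+1}\setminus\G_j$ arises from subdividing some element in $\clos(\G_j,\M_j)$, and $\clos$ only adds coarser elements lying in $(\mathbf p,m)$-patches of marked elements. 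Iterating the patch relation along the closure's \textbf{repeat}-loop, and using Lemma~\ref{lma: magic patches are nested} together with local quasi-uniformity (Lemma~\ref{lma: levels change slowly}), one bounds the ``reach'' of a single marking by a geometric series in $m^{-1/3}$ (the per-direction scaling factor between consecutive levels in the round-robin subdivision), which is exactly where the factor $\tfrac{m^{1/3}}{1-m^{-1/3}}$ comes from.

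Next I would set up the charging argument proper. For a generated element $K\in\G_J\setminus\G_0$, let $j(K)$ be the step at which $K$ was created; then $K\subseteq K^\ast$ for some $K^\ast\in\clos(\G_{j(K)},\M_{j(K)})$, and by unwinding the closure loop there is a chain $K^\ast=K^{(0)},K^{(1)},\dots,K^{(t)}$ with $K^{(i)}\in\patch{\G_{j(K)}}{K^{(i-1)}}$, $\ell(K^{(i)})<\ell(K^{(i-1)})$, terminating at some $K^{(t)}\in\M_{j(K)}$. I would charge $K$ to $K^{(t)}$. The key counting estimate is then: fix a marked element $\hat K\in\M_j$ of level $\ell$; how many $K\in\G_J\setminus\G_0$ can ever be charged to it? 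All such $K$ are nested in patches (of patches of $\dots$) of $\hat K$; using that $\D(k)$ shrinks by $m^{-1/3}$ each time the level goes up by one, the total index-domain volume swept by the iterated patches of $\hat K$ is at most the volume of $\U[\mathbf p,m](\hat K)$ times $\bigl(\sum_{i\ge0} m^{-i/3}\bigr) = \tfrac1{1-m^{-1/3}}$ in each direction — but because of the round-robin structure the three directions contribute the three different factors $4d_1+1$, $4d_2+m^{1/3}$, $4d_3+m^{2/3}$ (the $d_i$ absorbing the $p_i+\tfrac32$ half-widths of $\D(0)$, the $+1,+m^{1/3},+m^{2/3}$ absorbing the asymmetry in $\D(k)$ for $k\equiv1,2\bmod3$), and the overall $m^{1/3}$ prefactor comes from the smallest element charged being of level $\ge\ell-1$ by Lemma~\ref{lma: levels change slowly}. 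Counting elements rather than volume, and noting that elements charged to $\hat K$ at a given level all have the same volume, yields at most $C_{\mathbf p,m}$ elements charged to each $\hat K$.

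Finally, summing over all markings: $\left|\G_J\setminus\G_0\right| = \sum_{K\in\G_J\setminus\G_0}1 \le \sum_{j=0}^{J-1}\sum_{\hat K\in\M_j}(\text{\# elements charged to }\hat K) \le C_{\mathbf p,m}\sum_{j=0}^{J-1}|\M_j|$, which is the claim. One subtlety to handle carefully is that the same generated element $K$ could in principle be reachable from markings at several different steps $j$; the fix is to charge $K$ only via the step $j(K)$ at which it was actually created, so no double counting across steps occurs, and within a step the closure chain picks out one terminal marked element (ties broken arbitrarily).

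The main obstacle I expect is the bookkeeping in the counting estimate: making the ``iterated patches of $\hat K$ have total volume a geometric series'' argument precise requires that the levels strictly decrease along the closure chain (so the series is genuinely geometric and not merely bounded term-by-term), and it requires Lemma~\ref{lma: magic patches are nested} to guarantee that a patch of a coarser element in the chain is contained in a patch of $\hat K$ at the appropriate level — i.e.\ that the chain stays inside a single ``cone'' emanating from $\hat K$. Getting the three anisotropic direction-constants to come out as exactly $4d_1+1$, $4d_2+m^{1/3}$, $4d_3+m^{2/3}$ will hinge on reading off $\D(k)$ in Definition~\ref{df: magic patch} for the three residue classes of $k$ and summing the resulting geometric series separately in each coordinate; this is routine but must be done with care about which direction gets ``refined'' at which level in the round-robin.
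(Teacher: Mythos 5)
Your high-level framing (a Binev--Dahmen--DeVore charging argument powered by Lemma~\ref{lma: K1 in refMS => K2 in S}) is the right one, but the specific bookkeeping you propose has a genuine gap that the paper's proof is designed precisely to avoid. You charge each created element $K$ to a \emph{single} marked element $K^{(t)}\in\M_{j(K)}$ found by unwinding the closure chain \emph{within the step $j(K)$ at which $K$ was created}, and then claim each marked element is charged at most $C_{\mathbf p,m}$ times. This cannot work: the one-step closure $\clos(\G_j,\{\hat K\})$ of a single marked element $\hat K$ of level $\ell$ can cascade through \emph{all} levels $\ell-1,\ell-2,\dots,0$, picking up a bounded number of elements \emph{per level} but an $O(\ell)$ total, so the number of elements created at step $j$ and chargeable to $\hat K$ grows without bound as $\ell\to\infty$. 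Your volume argument does not rescue this, because it conflates index-domain volume with element count: the iterated patches of $\hat K$ do sweep a region of bounded volume (indeed $\D(k)$ \emph{grows} as the chain moves to coarser levels, so the sweep is dominated by $\D[\mathbf p,m](0)$), but that region contains $O(1)$ elements \emph{of each level} $0,\dots,\ell$, hence $O(\ell)$ elements in total. A bounded volume only bounds the count of elements of a \emph{fixed} level.

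The paper's proof repairs exactly this in two ways. First, the charge collected by a created element $K$ is spread over marked elements from \emph{many earlier refinement steps}: Lemma~\ref{lma: K1 in refMS => K2 in S} produces $K_1\in\M_{j_1}$ with $\ell(K)\le\ell(K_1)+1$, then (since $K_1$ was itself created at some step $j_2<j_1$) it is applied again to produce $K_2\in\M_{j_2}$, and so on; the triangle inequality over this cross-step chain, together with the geometric decay of the distances $m^{-\ell(K_i)/3}(d_1,d_2,d_3)$, shows $\sum_{\KM\in\M}\lambda(K,\KM)\ge1$. Second, the allocation is level-weighted, $\lambda(K,\KM)=m^{(\ell(K)-\ell(\KM))/3}$ when $\ell(K)\le\ell(\KM)+1$ and $\Dist(K,\KM)\le 2m^{-\ell(K)/3}(d_1,d_2,d_3)$, so that the total charge emitted by one marked element, $\sum_K\lambda(K,\KM)\le\sum_{j=1}^{\ell(\KM)+1}m^{(j-\ell(\KM))/3}\,\#B_j$ with $\#B_j\le(4d_1+1)(4d_2+m^{1/3})(4d_3+m^{2/3})$ per level, is summable as a geometric series giving the prefactor $\tfrac{m^{1/3}}{1-m^{-1/3}}$. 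Without the weight $m^{(\ell(K)-\ell(\KM))/3}$ (which your unweighted ``at most $C_{\mathbf p,m}$ elements charged to each $\hat K$'' implicitly drops), the per-marking sum is $O(\ell(\KM))$ and the theorem does not follow.
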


\begin{lma}\label{lma: K1 in refMS => K2 in S}
Given $\M\subseteq\G\in\A$ and $K\in\refine(\G,\M)\setminus\G$, there exists $K'\in \M$ such that $\ell(K)\le\ell(K')+1$ and \[\Dist(K,K')\le m^{-\ell(K)/3}(d_1,d_2,d_3),\]
with ``$\le$'' understood componentwise and constants 
\begin{align*}
d_1&\sei \tfrac1{1-m^{-1/3}}        \,\bigl(p_1+\tfrac{3+m^{1/3}}2+\tfrac{m^{1/3}-1}{m^2}\bigr),\\
d_2&\sei \tfrac{m^{1/3}}{1-m^{-1/3}}\,\bigl(p_2+\tfrac{3+m^{1/3}}2+\tfrac{m^{1/3}-1}{m^2}\bigr),\\
d_3&\sei \tfrac{m^{2/3}}{1-m^{-1/3}}\,\bigl(p_3+\tfrac{3+m^{1/3}}2+\tfrac{m^{1/3}-1}{m^2}\bigr).
\end{align*}
\end{lma}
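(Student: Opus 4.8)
The plan is to trace an element $K\in\refine(\G,\M)\setminus\G$ back through the closure algorithm to an originally marked element $K'\in\M$, and to bound the accumulated distance by summing a geometric series. The point is that $K$ is a child of some element $\bar K\in\clos(\G,\M)\setminus\G$, and $\bar K$ entered the closure set by a chain of at most finitely many steps of the \textbf{repeat}-loop in Algorithm~\ref{alg: closure}: there is a sequence $\bar K = \bar K_0, \bar K_1, \dots, \bar K_n = K'$ with $K'\in\M$ and, for each $i$, $\bar K_i\in\patch\G{\bar K_{i+1}}$ with $\ell(\bar K_i)<\ell(\bar K_{i+1})$. Actually I would set it up the other way: let $\bar K_n\sei\bar K$ be the element whose child is $K$, and walk backwards to $\bar K_0 = K'\in\M$, so that the levels are strictly decreasing along $\bar K_n, \bar K_{n-1},\dots$ down to $\bar K_0$, hence strictly increasing from $K'$ to $\bar K$, giving $\ell(\bar K_{i+1})\ge\ell(\bar K_i)+1$.

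First I would record $\ell(K) = \ell(\bar K)+1 = \ell(\bar K_n)+1 \le \ell(\bar K_0)+1 + $ (something) — more precisely, since levels increase by at least $1$ along the chain, $\ell(K') = \ell(\bar K_0)\le \ell(\bar K_n) = \ell(K)-1$, which gives the first claim $\ell(K)\le\ell(K')+1$ immediately. Next, for the distance bound, I would use the triangle inequality (componentwise) for $\Dist$:
\[
\Dist(K,K')\le\Dist(K,\bar K)+\sum_{i=0}^{n-1}\Dist(\bar K_{i+1},\bar K_i).
\]
The term $\Dist(K,\bar K)$ is bounded by half the sidelengths of $\bar K$, i.e.\ componentwise by $\tfrac12 m^{-\ell(\bar K)/3}(1,1,1)$ up to the direction-dependent factors coming from which coordinates have already been halved at level $\ell(\bar K)$; this is the $\tfrac{m^{1/3}-1}{m^2}$-type correction term in $d_1,d_2,d_3$ (the precise constant comes from bounding the geometric tail $\sum_{i\ge \ell(\bar K)} m^{-i/3}$-style contributions of the offsets within a single subdivided element). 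For each chain step, $\bar K_i\in\patch\G{\bar K_{i+1}}$ means $\bar K_i\cap\U(\bar K_{i+1})\neq\emptyset$, so $\Dist(\bar K_{i+1},\bar K_i)$ is bounded by $\D(\ell(\bar K_{i+1}))$ plus half the diameter of $\bar K_i$; using $\ell(\bar K_i)\ge\ell(\bar K_0)$ and the explicit formula for $\D$ from Definition~\ref{df: magic patch}, each such term is at most a constant (essentially $p_j + \tfrac32 + \tfrac12 m^{1/3}$ in the $j$-th coordinate, with the appropriate $m$-power in front) times $m^{-\ell(\bar K_{i+1})/3}$.

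Then I would sum over $i$. Since the levels $\ell(\bar K_1) < \ell(\bar K_2) < \dots < \ell(\bar K_n) = \ell(K)-1$ are strictly increasing integers, the factors $m^{-\ell(\bar K_{i+1})/3}$ are dominated by a geometric series with ratio $m^{-1/3}$ whose largest term is $m^{-\ell(K)/3}\cdot m^{1/3}$ (coming from $\ell(\bar K_n)=\ell(K)-1$), so $\sum_{i=0}^{n-1} m^{-\ell(\bar K_{i+1})/3}\le \tfrac{1}{1-m^{-1/3}}\, m^{1/3}\, m^{-\ell(K)/3}$. Collecting the per-coordinate constants and the geometric factor $\tfrac{1}{1-m^{-1/3}}$ yields exactly the stated $d_1,d_2,d_3$ (the extra powers $m^{1/3}$ in $d_2$ and $m^{2/3}$ in $d_3$ reflect that the $\D(k)$ bound in those coordinates is larger by those factors at levels $k\equiv 0 \bmod 3$, which governs the worst case). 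The main obstacle I anticipate is the careful bookkeeping of the direction-dependent factors in $\D(k)$ and in the within-element offsets: one has to check that regardless of $\ell(K)\bmod 3$ and of which coordinates of $K'$'s containing elements have been subdivided, the worst-case coordinatewise bound is the claimed one — this is where most of the (routine but delicate) computation lives, and where an off-by-a-power-of-$m^{1/3}$ error is easiest to make. The argument should also note that the chain is finite because levels are bounded below by $0$ and strictly increasing, and that $\bar K_0\in\M$ because the \textbf{repeat}-loop only ever adds elements that are in some $\patch\G{\cdot}$ of a previously-added element, terminating with the original marks.
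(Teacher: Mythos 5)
Your overall strategy coincides with the paper's (Appendix~\ref{apx: K1 in refMS => K2 in S}): trace $K$ back through the closure chain to a marked element, bound each link of the chain via the patch definition and local quasi-uniformity, and sum a geometric series. But your writeup contains a concrete error in the orientation of that chain. Algorithm~\ref{alg: closure} only ever adds elements that are \emph{coarser} than elements already collected, so along the chain from the mark $K'$ to the element $\bar K$ whose child is $K$ the levels strictly \emph{decrease}; the mark is the finest element of the chain and $\bar K$ the coarsest, giving $\ell(\bar K)\le\ell(K')-n\le\ell(K')$ and hence $\ell(K)=\ell(\bar K)+1\le\ell(K')+1$. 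Your re-indexed setup asserts the opposite monotonicity, and the inequality you actually derive, $\ell(K')\le\ell(K)-1$, yields $\ell(K)\ge\ell(K')+1$ --- the reverse of the first claim. The conclusion you state is correct, but it does not follow from the chain as you oriented it.

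The same orientation error propagates into the distance estimate and produces exactly the ``off-by-a-power-of-$m^{1/3}$'' slip you flagged as a risk. Each link (coarser element contained in the patch of the finer one) is bounded by $\D(\cdot)$ evaluated at the level of the \emph{finer} element plus half the size of the coarser one, i.e.\ by a constant times $m^{-\ell/3}$ with $\ell$ the finer element's level; the finest such level occurring in the sum is $\ell(K)$ (the level of the element one step above $\bar K$), not $\ell(K)-1$. The geometric series is therefore bounded by $\tfrac1{1-m^{-1/3}}\,m^{-\ell(K)/3}$ times the per-coordinate constant, whereas your accounting gives $\tfrac{m^{1/3}}{1-m^{-1/3}}\,m^{-\ell(K)/3}$; with that extra factor the stated $d_1,d_2,d_3$ are \emph{not} reproduced, since the paper's constants have no slack (one checks $m^{-5/3}=\tfrac1{1-m^{-1/3}}\cdot\tfrac{m^{1/3}-1}{m^2}$, so the child-offset term $\Dist(K,\bar K)$ from \eqref{eq: child dist} accounts precisely for the $\tfrac{m^{1/3}-1}{m^2}$ summand and nothing is left over to absorb a factor $m^{1/3}$). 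Two smaller points: $\clos(\G,\M)\setminus\G=\emptyset$ since the closure is a subset of $\G$, so you mean $\bar K\in\clos(\G,\M)$; and you should invoke Lemma~\ref{lma: levels change slowly} explicitly to guarantee that consecutive levels in the chain differ by exactly one and that the half-size of the coarser element in each link is controlled by the finer element's level.
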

The proof is given in Appendix~\ref{apx: K1 in refMS => K2 in S}.

\begin{proof}[Proof of Theorem~\ref{thm: complexity}]\ 

\pnumpx For $K\in\tcup\A$ and $\KM\in\M\sei\M_0\cup\dots\cup\M_{J-1}$, define $\lambda(K,\KM)$ by \[\lambda(K,\KM)\sei\begin{cases}m^{(\ell(K)-\ell(\KM))/3}&\text{if }\ell(K)\le\ell(\KM)+1\text{ and }\Dist(K,\KM)\le 2m^{-\ell(K)/3}(d_1,d_2,d_3),\\[.3em]0&\text{otherwise.}\end{cases}\]

\pnumpx[Main idea of the proof.]
\begin{alignat*}{2}
\left|\G_J\setminus\G_0\right| &= \mathbox[2.5em]{\sum_{K\in\G_J\setminus\G_0}}1 &&\Stackrel{\numref{sum_lambda > 1}}\le\sum_{K\in\G_J\setminus\G_0}\sum_{\KM\in\M}\lambda(K,\KM) \\
&\Stackrel{\numref{summe aller lambdas beschraenkt}}\le\sum_{\KM\in\M} C_{\mathbf p,m} &&=C_{\mathbf p,m}\,\sum_{j=0}^{J-1} |\M_j|.
\end{alignat*}

\pnumpx[Each $K\in\G_J\setminus\G_0$ satisfies \[\sum_{\KM\in\M}\lambda(K,\KM)\ \ge\ 1.\]]%
\label{sum_lambda > 1}%
Consider $K\in\G_J\setminus\G_0$. Set $j_1<J$ such that $K\in\G_{j_1+1}\setminus\G_{j_1}$. Lemma~\ref{lma: K1 in refMS => K2 in S} states the existence of $K_1\in\M_{j_1}$ with $\Dist(K,K_1)\le m^{-\ell(K)/3}(d_1,d_2,d_3)$ and $\ell(K)\le\ell(K_1)+1$. Hence $\lambda(K,K_1)=m^{\ell(K)-\ell(K_1)}>0$.
The repeated use of Lemma~\ref{lma: K1 in refMS => K2 in S} yields $j_1>j_2>j_3>\dots$ and $K_2,K_3,\dots$ with $K_{i-1}\in\G_{j_i+1}\setminus\G_{j_i}$ and $K_i\in\M_{j_i}$ such that 
\begin{equation}\label{eq: complexity -last}
\Dist(K_{i-1},K_i)\le m^{-\ell(K_{i-1})/3}(d_1,d_2,d_3)\enspace\text{and}\enspace\ell(K_{i-1})\le\ell(K_i)+1.
\end{equation}
We repeat applying Lemma~\ref{lma: K1 in refMS => K2 in S} as $\lambda(K,K_i)>0$ and $\ell(K_i)>0$, and we stop at the first index $L$ with $\lambda(K,K_L)=0$ or $\ell(K_L)=0$. 
If $\ell(K_L)=0$ and $\lambda(K,K_L)>0$, then
\[\sum_{\KM\in\M}\lambda(K,\KM)\ge\lambda(K,K_L)=m^{(\ell(K)-\ell(K_L))/3}\ge m^{1/3}.\]
If $\lambda(K,K_L)=0$ because $\ell(K)>\ell(K_L)+1$, then \eqref{eq: complexity -last} yields $\ell(K_{L-1})\le\ell(K_L)+1<\ell(K)$ and hence
\[\sum_{\KM\in\M}\lambda(K,\KM)\ge\lambda(K,K_{L-1})=m^{(\ell(K)-\ell(K_{L-1}))/3}>m^{1/3}.\]
If $\lambda(K,K_L)=0$ because $\Dist(K,K_L)>2m^{-\ell(K)/3}(d_1,d_2,d_3)$, then a triangle inequality shows
\begin{align*}
2m^{-\ell(K)/3}(d_1,d_2,d_3) &< \Dist(K,K_1)+\sum_{i=1}^{L-1}\Dist(K_i,K_{i+1}) 
\\&\le\ m^{-\ell(K)/3}(d_1,d_2,d_3)+\sum_{i=1}^{L-1} m^{-\ell(K_i)/3}(d_1,d_2,d_3),
\end{align*}
and hence $\smash{\displaystyle m^{-\ell(K)/3} \le\sum_{i=1}^{L-1} m^{-\ell(K_i)/3}}$. The proof is concluded with 
\[ 1\ \le\ \sum_{i=1}^{L-1} m^{(\ell(K)-\ell(K_i))/3}\ =\ \sum_{i=1}^{L-1} \lambda(K,K_i)\ \le\ \sum_{\KM\in\M}\lambda(K,\KM).\]

\pnumpx[For all $j\in\{0,\dots,J-1\}$ and $\KM\in\M_j$ holds \[\sum_{K\in\G_J\setminus\G_0}\lambda(K,\KM)\ \le\ \tfrac{m^{1/3}}{1-m^{-1/3}}\,\bigl(4d_1+1\bigr)\,\bigl(4d_2+m^{1/3}\bigr)\,\bigl(4d_3+m^{2/3}\bigr)\ =\ C_{\mathbf p,m}\ .\]]%
\label{summe aller lambdas beschraenkt}%
This is shown as follows. By definition of $\lambda$, we have
\begin{align*}
\mathbox[1cm]{\sum_{K\in\G_J\setminus\G_0}}\lambda(K,\KM)
&\le \mathbox[1cm]{\sum_{K\in\bigcup\A\setminus\G_0}}\lambda(K,\KM)\\
&= \sum_{j=1}^{\ell(\KM)+1}m^{(j-\ell(\KM))/3}\,\#\underbrace{\bigl\{K\in\tcup\A\mid\ell(K)=j\text{ and }\Dist(K,\KM)\le 2m^{-j/3}(d_1,d_2,d_3)\bigl\}}_B.
\end{align*}
Since we know by definition of the level that $\ell(K)=j$ implies $|K|=m^{-j}$, we know that $m^j\left|\tcup B\right|$ is an upper bound of $\#B$. The cuboidal set $\tcup B$ is the union of all admissible elements of level $j$ having their midpoints inside a cuboid of size 
\[4m^{-j/3}d_1\,\times\,4m^{-j/3}d_2\,\times\,4m^{-j/3}d_3.\]
An admissible element of level $j$ is not bigger than $m^{-j/3}\,\times\,m^{(1-j)/3}\,\times\,m^{(2-j)/3}$. 
Together, we have \[\bigl|\tcup B\bigr|\le m^{-j}\,\bigl(4d_1+1\bigr)\,\bigl(4d_2+m^{1/3}\bigr)\,\bigl(4d_3+m^{2/3}\bigr),\] and hence $\#B\le \bigl(4d_1+1\bigr)\,\bigl(4d_2+m^{1/3}\bigr)\,\bigl(4d_3+m^{2/3}\bigr)$. An index substitution $k\sei1-j+\ell(\KM)$ proves the claim with
\[\sum_{j=1}^{\ell(\KM)+1}m^{(j-\ell(\KM))/3}=\sum_{k=0}^{\ell(\KM)}m^{(1-k)/3}<m^{1/3}\sum_{k=0}^\infty m^{-k/3}=\tfrac{m^{1/3}}{1-m^{-1/3}}.\]\raiseqed
\end{proof}
\vspace{1em}
\subsection*{An experiment on $C_{\mathbf p,m}$}

The constant $C_{\mathbf p,m}$ arising from this theory is very large, however we observed much smaller ratios of refined and marked elements in the experiment (in all cases less than $\tfrac{C_{\mathbf p,m}}{3000}$, see Figure~\ref{fig: complexity constants}). Starting from a $5\times5\times5$ mesh, we applied the refinement algorithm with only one corner element marked, always sticking to the same corner. This is realistic when resolving a singularity of the solution of a discretized PDE. The advantage of greater grading parameters could not be seen in random refinement all over the domain.

\begin{figure}[ht]
\centering
\includegraphics[width=.45\textwidth]{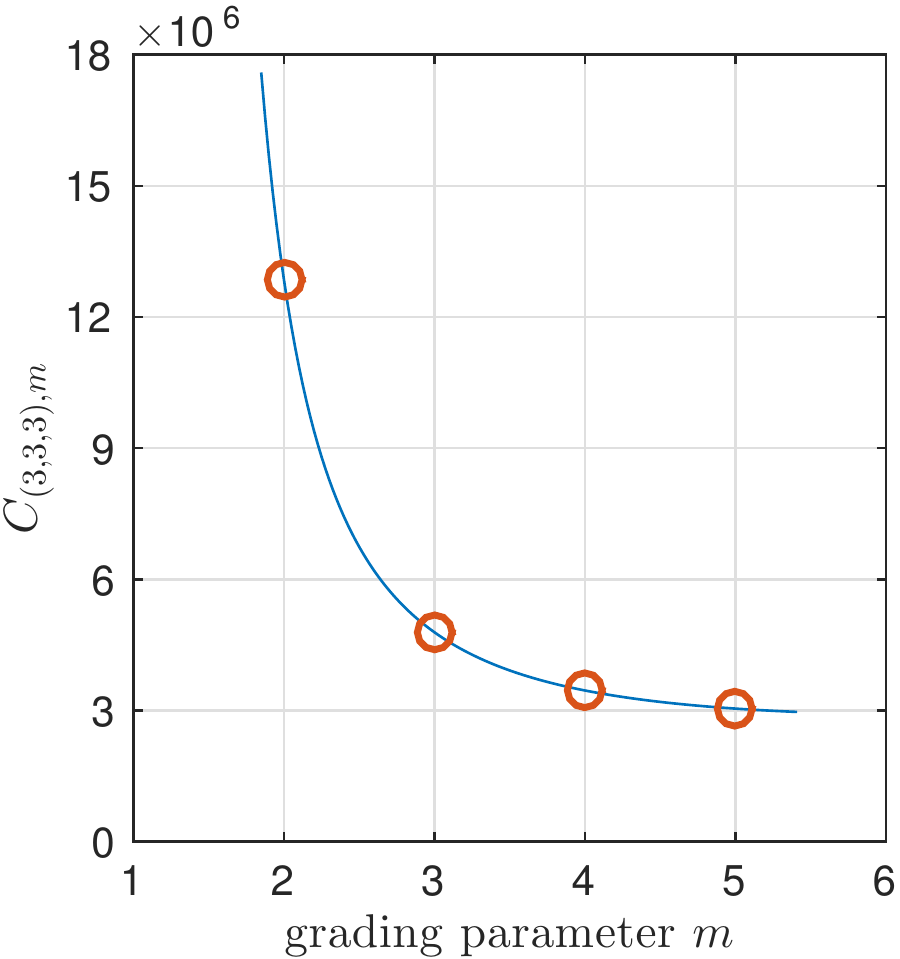}%
\hspace{.1\textwidth}%
\includegraphics[width=.45\textwidth]{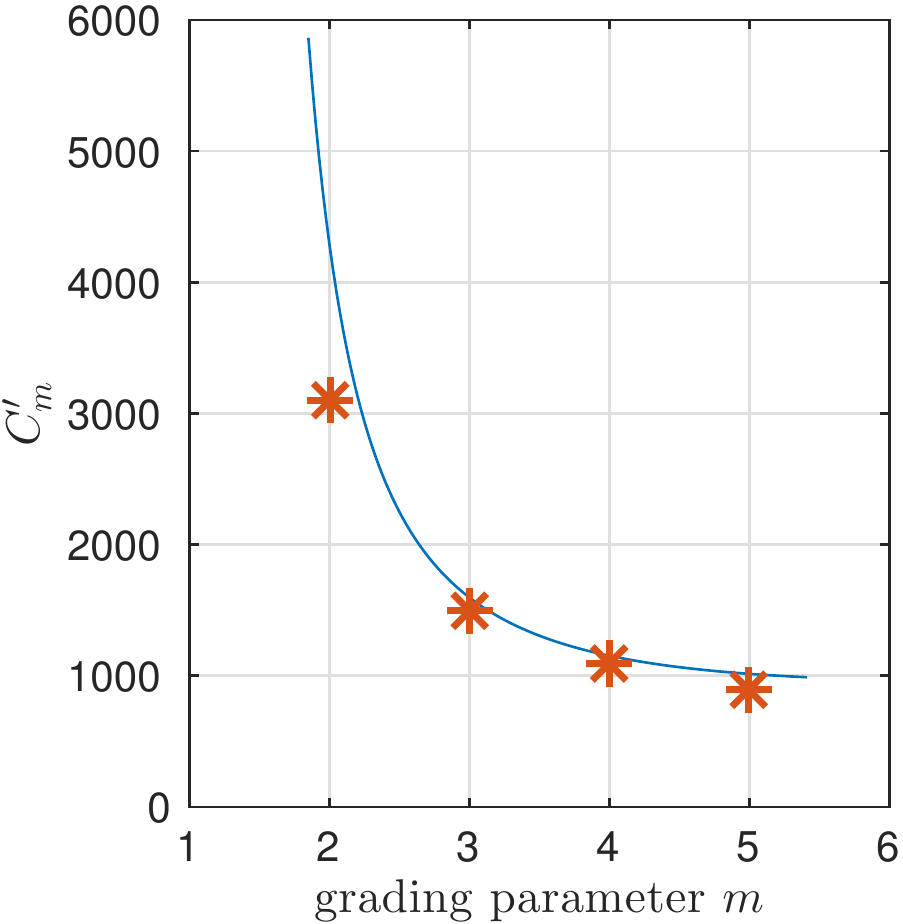}
\caption{The complexity constant $C_{\mathbf p,m}$ in theory (left) and experiment (right). The values of $C'_m$ were taken from an experiment illustrated in Figure~\ref{fig: testing complexity}.}
\label{fig: complexity constants}
\end{figure}

\begin{figure}[ht]
\centering
\includegraphics[width=.70\textwidth]{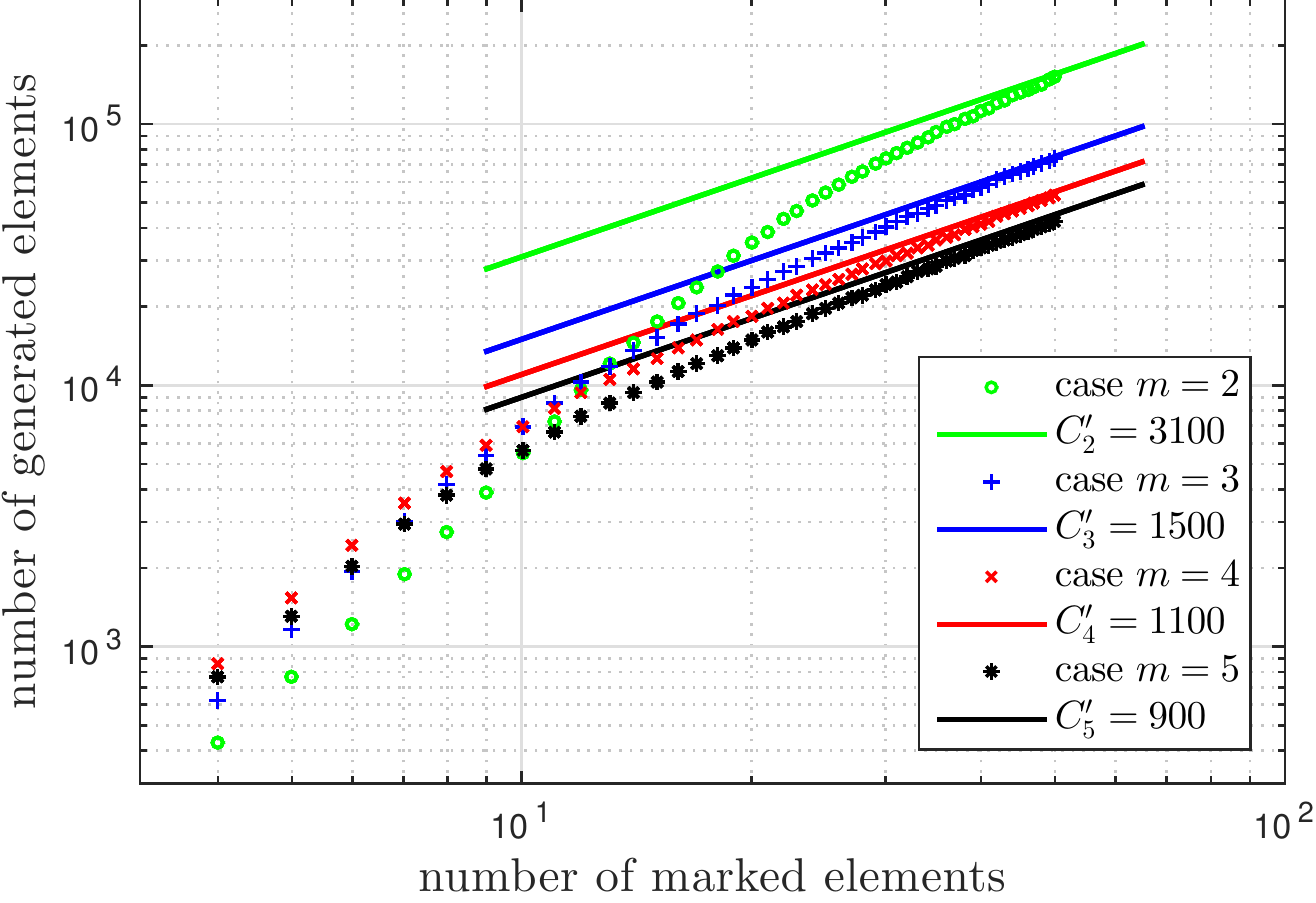}
\caption{Estimation of the experimental constants $C'_m$ for $m=2,\dots,5$.}
\label{fig: testing complexity}
\end{figure}

\section{Conclusions \& Outlook}\label{sec: conclusions}

We have generalized the concept of Analysis-Suitability to three-dimensional meshes that originate from tensor-product initial meshes, and proved that it guarantees linear independence of the T-spline blending functions.
We introduced a local refinement algorithm with adjustable mesh grading, and proved that it has linear complexity in the sense that the overhead for preserving Analysis-Suitability is essentially bounded  by the number of marked elements.
We expect that these results also generalize to even-degree and mixed-degree T-splines. In order to achieve this, a universal definition of anchor elements is needed, based on the techniques from \cite{BBSV:2013}.

Open questions that have not been investigated in this paper address the overlay (this is, the coarsest common refinement of two meshes), the nesting behavior of the T-spline spaces, and more general meshes.
As in our preliminary work \cite{Morgenstern:Peterseim:2015}, we expect that the overlay has a bounded cardinality in terms of the two overlaid meshes, and that it is also an admissible mesh.
Nestedness of T-spline spaces is not evident in general \cite{Li:Scott:2014}, but we expect nestedness for the meshes generated by the proposed refinement algorithm. A first step in this issue will be a characterization of three-dimensional meshes that induce nested T-spline spaces.
A generalization of this paper to a more general class of meshes will most likely require a manifold representation of the mesh, and use recent results on Dual-Compatibility in spline manifolds \cite{STV:2015}.
%

\providecommand{\bysame}{\leavevmode\hbox to3em{\hrulefill}\thinspace}
\providecommand{\MR}{\relax\ifhmode\unskip\space\fi MR }
\providecommand{\MRhref}[2]{%
  \href{http://www.ams.org/mathscinet-getitem?mr=#1}{#2}
}
\providecommand{\href}[2]{#2}

\appendix
\section{Minor proofs}
\subsection{Proof of Lemma~\ref{lma: magic patches are nested}}\label{apx: magic patches are nested}%
\begin{silentproof}
If $K=\hat K$, the claim is trivially fulfilled. If otherwise $K\subsetneqq\hat K$, we consider the following two cases.

\emph{Case 1.} Assume that $\ell(K)=\ell(\hat K)+1$. 
Since $K=[x,x+\tilde x]\times[y,y+\tilde y]\times[z,z+\tilde z]$ is the result of successive subdivisions of a unit cube, it holds that
\begin{equation}\label{eq: df size}
\operatorname{size}(\ell(K))\sei(\tilde x,\tilde y, \tilde z) =\begin{cases}m^{-\ell(K)/3}\left(1,1,1\right)&\text{if }\ell(K)=0\bmod 3,\\m^{-(\ell(K)-1)/3}\bigl(\tfrac1m,1,1\bigr)&\text{if }\ell(K)=1\bmod 3,\\m^{-(\ell(K)-2)/3}\bigl(\tfrac1m,\tfrac1m,1\bigr)&\text{if }\ell(K)=2\bmod 3.\end{cases}
\end{equation}
Since $K$ results from the subdivision of $\hat K$, we also have that
\begin{equation}\label{eq: parent dist}
\Dist(K,\hat K)=\begin{cases}\bigl(m^{-(\ell(\hat K)+6)/3},0,0\bigr)&\text{if }\ell(\hat K)=0\bmod 3,\\\bigl(0,m^{-(\ell(\hat K)+5)/3},0\bigr)&\text{if }\ell(\hat K)=1\bmod 3,\\\bigl(0,0,m^{-(\ell(\hat K)+4)/3}\bigr)&\text{if }\ell(\hat K)=2\bmod 3.\end{cases}
\end{equation}
Recall that
\[\D(k)\sei\begin{cases}m^{-k/3}\,\bigl(p_1+\tfrac32,p_2+\tfrac32,p_3+\tfrac32\bigr)&\text{if }k=0\bmod3,
\\[.7ex]
m^{-(k-1)/3}\,\bigl(\tfrac{p_1+3/2}m,p_2+\tfrac32,p_3+\tfrac32\bigr)&\text{if }k=1\bmod3,
\\[.7ex]
m^{-(k-2)/3}\,\bigl(\tfrac{p_1+3/2}m,\tfrac{p_2+3/2}m,p_3+\tfrac32\bigr)&\text{if }k=2\bmod3.\end{cases}
\]
We rewrite \eqref{eq: parent dist} in the form 
\begin{equation}\label{eq: child dist}
\Dist(K,\hat K)=\begin{cases}\bigl(0,0,m^{-(\ell(K)+3)/3}\bigr)&\text{if }\ell(K)=0\bmod 3,\\\bigl(m^{-(\ell(K)+5)/3},0,0\bigr)&\text{if }\ell(K)=1\bmod 3,\\\bigl(0,m^{-(\ell(K)+4)/3},0\bigr)&\text{if }\ell(K)=2\bmod 3\end{cases}
\end{equation}
and observe that $\D(\ell(K))+\Dist(K,\hat K)\le\D(\ell(K)-1)=\D(\ell(\hat K))$. The case~1 is concluded with
\begin{align*}
\U(K) &= \{x\in\Omega\mid\Dist(K,x)\le\D(\ell(K))\}\\
&\subseteq \{x\in\Omega\mid\Dist(\hat K,x)\le\D(\ell(K)) + \Dist(K,\hat K)\}\\
&\subseteq\U(\hat K),
\end{align*}
and consequently $\patch\G K\subseteq\patch\G{\hat K}$.

\emph{Case 2.} Consider $K\subset\hat K$ with $\ell(K)>\ell(\hat K)+1$, then there is a sequence 
\[K=K_0\subset K_1\subset\dots\subset K_J=\hat K\] such that $K_{j-1}\in\child(K_j)$  for $j=1,\dots,L$. Case~1 yields \[\patch\G K\subseteq\patch\G{K_1}\subseteq\dots\subseteq\patch\G{\hat K}.\]\raiseqed
\end{silentproof}

\subsection{Proof of Lemma~\ref{lma: levels change slowly}}\label{apx: levels change slowly}%
\begin{silentproof}
For $\ell(K)=0$, the assertion is always true. For $\ell(K)>0$, consider the parent $\hat K$ of $K$ (i.e., the unique element $\hat K\in\tcup\A$ with $K\in\child(\hat K)$). 
Since $\G$ is admissible, there are admissible meshes $\G_0,\dots,\G_J=\G$ and some $j\in\{0,\dots,J-1\}$ such that $K\in\G_{j+1}=\subdiv(\G_j,\{\hat K\})$.
The admissibility $\G_{j+1}\in\A$ implies that any $K'\in\patch{\G_j}{\hat K}$ satisfies $\ell(K')\ge\ell(\hat K)=\ell(K)-1$.
Since levels do not decrease during refinement, we get
\begin{align}
\ell(K)-1\le\min\ell(\patch{\G_j}{\hat K})&\le\min\ell(\patch\G{\hat K})\label{eq: levels change slooowly}\\
\notag&\Stackrel{\text{Lemma~\ref{lma: magic patches are nested}}}\le\enspace\min\ell(\patch\G K).
\end{align}
\end{silentproof}
\subsection{Proof of Lemma~\ref{lma: K1 in refMS => K2 in S}}\label{apx: K1 in refMS => K2 in S}%
\begin{silentproof}
The coefficient $\D(k)$ from Definition~\ref{df: magic patch} is bounded by 
\begin{equation}\label{eq: D bounded}
\D(k)\le m^{-k/3}\,\underbrace{\Bigl(p_1+\tfrac32,\ m^{1/3}\bigl(p_2+\tfrac32\bigr),\ m^{2/3}\bigl(p_3+\tfrac32\bigr)\Bigr)}_{\mathbf{\tilde p}}\quad\text{for all }k\in\mathbb N.
\end{equation}
Recall $\operatorname{size}(k)$ from \eqref{eq: df size} and note that it is decreasing and bounded by
\begin{equation}\label{eq: size bounded}
\operatorname{size}(k)\le m^{-k/3}\,\bigl(1,m^{1/3},m^{2/3}\bigr).
\end{equation}
Hence for $\tilde K\in\G\in\A$ and $\tilde K'\in\patch\G{\tilde K}$, there is $x\in \tilde K'\cap\U(\tilde K)$ and
hence
\begin{align}
\Dist(\tilde K,\tilde K') &\le \Dist(\tilde K,x) + \Dist(\tilde K',x) \notag\\
&\le \Dist(\tilde K,x) + \tfrac12\operatorname{size}(\ell(\tilde K')) \notag\\
&\Stackrel{\text{Lemma~\ref{lma: levels change slowly}}}\le\quad\Dist(\tilde K,x) + \tfrac12\operatorname{size}(\ell(\tilde K)-1) \notag\\
&\Stackrel{\eqref{eq: size bounded}}\le m^{-\ell(\tilde K)/3}\,\mathbf{\tilde p} +  m^{-\ell(\tilde K)/3}\underbrace{\bigl(\tfrac{m^{1/3}}2,\tfrac{m^{2/3}}2,\tfrac m2\bigr)}_{\textstyle\mathbf s} \notag\\
\label{eq: magic radius}
&\le m^{-\ell(\tilde K)/3}\left(\mathbf{\tilde p} + \mathbf s\right).
\end{align}
The existence of $K\in\refine(\G,\M)\setminus\G$ means that Algorithm~\ref{alg: refinement} subdivides $K'=K_J,K_{J-1},\dots,K_0$ such that $K_{j-1}\in\patch\G{K_j}$ and $\ell(K_{j-1})<\ell(K_j)$ for $j=J,\dots,1$, having $K'\in \M$ and $K\in\child(K_0)$, with `$\child$' from Definition~\ref{df: subdivision}. Lemma~\ref{lma: levels change slowly} yields $\ell(K_{j-1})=\ell(K_j)-1$ for $j=J,\dots,1$, which yields the estimate
\begin{alignat*}{2}
\Dist(K',K_0)\enspace&\le\enspace \sum_{j=1}^J\Dist(K_j,K_{j-1})
&\enspace &\Stackrel{\eqref{eq: magic radius}}\le \enspace \sum_{j=1}^Jm^{-\ell(K_j)/3}\,(\mathbf{\tilde p}+\mathbf s) \\[1ex]
&= \sum_{j=1}^Jm^{-(\ell(K_0)+j)/3}\,(\mathbf{\tilde p}+\mathbf s) &
&< m^{-\ell(K_0)/3}\,(\mathbf{\tilde p}+\mathbf s)\sum_{j=1}^\infty m^{-j/3} \\[1ex]
&= \frac{m^{-1/3-\ell(K_0)/3}}{1-m^{-1/3}}\,(\mathbf{\tilde p}+\mathbf s) &
&= \frac{m^{-\ell(K)/3}}{1-m^{-1/3}}\,(\mathbf{\tilde p}+\mathbf s).
\end{alignat*}
From \eqref{eq: child dist} we get 
\[\Dist(K_0,K)\le \bigl(m^{-(\ell(K)+5)/3},m^{-(\ell(K)+4)/3},m^{-(\ell(K)+3)/3}\bigr).\] 
This and a triangle inequality conclude the proof.
\end{silentproof}

\end{document}